\numberwithin{equation}{section}
\let\Re=\undefined\DeclareMathOperator*{\Re}{Re}
\let\Im=\undefined\DeclareMathOperator*{\Im}{Im}
\newcommand{\R}{\mathbb{R}}
\newcommand{\C}{\mathbb{C}}
\newcommand{\wh}[1]{\widehat{#1}}
\newcommand{\eps}{\varepsilon}
\newcommand{\M}{M}
\newcommand{\D}{{D}}
\newcommand{\F}{\mathcal{F}}
\newcommand{\who}[1]{\widehat{\overline{#1}}}
\renewcommand{\b}{\bar}
\newcommand{\ddd}{\,d\sigma\,d\eta\,ds}
\newcommand{\dd}{\,d\sigma\,d\eta}
\newcommand{\an}{\,\vert\partial_x\vert^{-1}}
\newcommand{\aan}{\,\vert\partial_x\vert^{-2}}
\newtheorem{theorem}{Theorem}[section]
\newtheorem{lemma}[theorem]{Lemma}
\newtheorem{proposition}[theorem]{Proposition}
\theoremstyle{definition}
\newtheorem{remark}[theorem]{Remark}
\theoremstyle{remark}
\newtheorem*{remarks}{Remarks}
\title[Cubic NLS]{Almost global existence for cubic nonlinear Schr\"odinger equations in one space dimension}
\author[J. Murphy]{Jason Murphy}
\address{Department of Mathematics, UC Berkeley}
\email{murphy@math.berkeley.edu}
\author[F. Pusateri]{Fabio Pusateri}
\address{Department of Mathematics, Princeton University}
\email{fabiop@math.princeton.edu}
\begin{document}

\date{\today}

\begin{abstract}
We consider non-gauge-invariant cubic nonlinear Schr\"odinger equations in one space dimension.
We show that initial data of size $\eps$ in a weighted Sobolev space lead to solutions with sharp $L_x^\infty$~decay up to time $\exp(C\eps^{-2})$.  We also exhibit norm growth beyond this time for a specific choice of nonlinearity.
\end{abstract}

\maketitle


			\section{Introduction}
			\label{sec:intro}
		
We study the initial-value problem for the following cubic nonlinear Schr\"odinger equation (NLS) in one space dimension:
			\begin{equation}
			\label{nls}			
			\begin{cases}
	(i\partial_t+\tfrac12\partial_{xx})u=
			\lambda_1\bar{u}^3+\lambda_2u^3+\lambda_3\vert u\vert^2\bar{u}+\lambda_4\vert u\vert^2 u,
			\\ u(1)=u_1\in\Sigma,
			\end{cases}
			\end{equation}
where $u:\R_t\times\R_x\to\C$ is a complex-valued function of space-time, $\lambda_j \in \C$ for $j=1,\dots4$, and
$$ \|u_1\|_{\Sigma}:=\|u_1\|_{L_x^2}+\|\partial_x u_1\|_{L_x^2}+\|xu_1\|_{L_x^2}.$$
Our main result, Theorem \ref{thm:main} below, is almost global existence for small solutions to \eqref{nls}.

The most widely-studied cubic NLS is the gauge-invariant equation
\begin{equation}\label{gauge-nls}
(i\partial_t+\tfrac12\partial_{xx})u=\pm\vert u\vert^2 u.
\end{equation}
{Gauge invariance} (that is, the symmetry $u\mapsto e^{i\theta}u$ for $\theta\in\R$) corresponds to the conservation of the $L_x^2$-norm. As the cubic NLS in one dimension is $L_x^2$-subcritical, 
this conservation law (together with Strichartz estimates) leads to a simple proof of global well-posedness of \eqref{gauge-nls} in $L_x^2$. 
As for long-time behavior, equation \eqref{gauge-nls} in one dimension is a borderline case for the $L_x^2$ scattering theory: 
for `short-range' nonlinearities $\vert u\vert^p u$ with $p>2$ there are positive results, 
while for the `long-range' case $0<p\leq 2$ there is no $L_x^2$ scattering \cite{Barab, TsutsumiYajima}. 
For \eqref{gauge-nls}, small data in $\Sigma$ lead to global solutions that decay in $L_x^\infty$ 
at the sharp rate $t^{-1/2}$ and exhibit modified scattering, that is, linear behavior up to a logarithmic phase correction as $t\to\infty$ 
(see for example \cite{DeiftZhou, HN0, KatoPusateri, IfrimTataru}).

For non-gauge-invariant equations like \eqref{nls}, the question of global existence is less well-understood.  Hayashi--Naumkin have studied non-gauge-invariant cubic NLS in one space dimension extensively
(see \cite{HN1, HN2, HN3, HN4, HN6, HN7, Naumkin}, for example).
They have shown that under specific conditions on the initial data, small data in weighted Sobolev spaces can lead to global solutions.
In these cases they are also able to describe the asymptotic behavior.  In this paper, we prove almost global existence for small (but otherwise arbitrary) data in $\Sigma$.  Such a result is in the spirit of the well-known works concerning quadratic wave equations in three dimensions \cite{John1, JK}.
Similar results have also been established for the cubic NLS with derivative nonlinearities; see, for example, \cite{SagawaSunagawa, Sunagawa}.
However, as mentioned in \cite{HN1, Naumkin, SagawaSunagawa},
there is a sense in which cubic nonlinearities containing at least one derivative can be considered `short-range',
while this is not the case for the problem without derivatives. We will discuss this in a bit more detail below in Section~\ref{sec:proof}.

Our main theorem is the following.
\begin{theorem}[Almost global existence]	\label{thm:main}
Let $u_1\in\Sigma$ and let $\eps:=\|u_1\|_\Sigma.$ If $\eps>0$ is sufficiently small, then there exists a unique solution
	$u\in C([1,T_\eps];\Sigma)$
to \eqref{nls} with $u(1)=u_1,$ where $T_\eps=\exp(\tfrac{1}{c\eps^2})$ for some absolute constant $c>0$. Furthermore,
for some $C>0$,
\begin{equation}\label{worse-J}
\sup_{t\in[1,T_\eps]}\big\{ \|\wh{u}(t)\|_{L_\xi^\infty} + t^{\frac12}\|u(t)\|_{L_x^\infty}+t^{-\frac14}\|(x+it\partial_x)u(t)\|_{L_x^2}\big\} \leq C\eps.
\end{equation}
\end{theorem}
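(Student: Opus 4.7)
The plan is a bootstrap argument combined with a space-time resonance analysis of the profile equation. Let $f(t):= e^{-it\partial_{xx}/2}u(t)$ denote the profile, so $\wh u(t,\xi) = e^{-it\xi^2/2}\wh f(t,\xi)$, and let $J := x + it\partial_x$ be the Galilean vector field, which commutes with $i\partial_t + \tfrac12\partial_{xx}$ and satisfies $\|Ju\|_{L^2_x} = \|\partial_\xi \wh f\|_{L^2_\xi}$. Define
\[
A(T) := \sup_{t\in[1,T]}\Bigl\{\|\wh f(t)\|_{L^\infty_\xi} + t^{1/2}\|u(t)\|_{L^\infty_x} + t^{-1/4}\|\partial_\xi\wh f(t)\|_{L^2_\xi}\Bigr\}.
\]
Local theory in $\Sigma$ provides an interval $[1,T]$ on which $A(T)\le C_1\eps$ for some large constant $C_1$; the objective is to improve this to $A(T)\le C_1\eps/2$ on $[1,T_\eps]$, after which continuity extends the solution up to $T_\eps$. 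The $L^\infty_x$-decay piece is automatic from the sharp 1D linear Schr\"odinger asymptotic
\[
\bigl\|u(t,\cdot) - (it)^{-1/2}e^{ix^2/(2t)}\wh f(t,x/t)\bigr\|_{L^\infty_x}\lesssim t^{-3/4}\|\partial_\xi\wh f(t)\|_{L^2_\xi},
\]
which yields $t^{1/2}\|u\|_{L^\infty_x}\lesssim \|\wh f\|_{L^\infty}+t^{-1/4}\|\partial_\xi\wh f\|_{L^2}$. The weighted $L^2$ piece uses commutation of $J$ with the linear part together with Leibniz-type identities such as $J(u_1u_2u_3) = (Ju_1)u_2u_3 + u_1(Ju_2)u_3 + u_1u_2(Ju_3) - 2xu_1u_2u_3$ (with analogous versions when $\bar u$ appears and $J\bar u$ is used), the extraneous $xu_1u_2u_3$ factors being absorbed via $xu = Ju - it\partial_x u$ and a Gr\"onwall estimate on $\|\partial_x u\|_{L^2}$ obtained by commuting $\partial_x$ with the linear part. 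The standard energy estimate produces $\tfrac{d}{dt}\|Ju\|_{L^2}^2 \lesssim \|u\|_{L^\infty_x}^2\|Ju\|_{L^2}^2$, which under the bootstrap integrates to $\|Ju(t)\|_{L^2}\le \eps + CC_1^3\eps^3 t^{1/4}$ and so improves the bootstrap for $\eps$ small.

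The heart of the proof is the $L^\infty_\xi$ bound for $\wh f$. Writing the equation in Fourier,
\[
i\partial_t\wh f(t,\xi) = \sum_{j=1}^4\lambda_j\int_{\R^2} e^{it\Phi_j(\xi,\eta,\sigma)}\wh f^{(\ast)}(\xi-\eta)\wh f^{(\ast)}(\eta-\sigma)\wh f^{(\ast)}(\sigma)\,d\eta\,d\sigma,
\]
where $\wh f^{(\ast)}$ denotes $\wh f$ or $\overline{\wh f(-\cdot)}$ depending on $j$ and each $\Phi_j$ is quadratic in $(\eta,\sigma)$. The phase for the gauge-invariant nonlinearity $|u|^2u$ is $\Phi_4(\xi,\eta,\sigma) = \eta(\xi-\sigma)$, which vanishes at its unique nondegenerate critical point $(\eta,\sigma) = (0,\xi)$; two-dimensional stationary phase extracts the resonant contribution $-ic_0\lambda_4 t^{-1}|\wh f(\xi)|^2\wh f(\xi)$ with $c_0$ a nonzero real constant. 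For the three non-gauge-invariant terms $\bar u^3$, $u^3$, $|u|^2\bar u$ the critical points of $\Phi_j$ are also nondegenerate, but $\Phi_j$ evaluates to a nonzero constant multiple of $\xi^2$ there, so the corresponding contributions are time-non-resonant; a normal form (integration by parts in $t$) combined with frequency-space integration by parts away from the stationary set produces cumulative contributions uniformly bounded by $CC_1^3\eps^3$. Assembling,
\[
\partial_t\wh f(t,\xi) = -ic_0\lambda_4 t^{-1}|\wh f(t,\xi)|^2\wh f(t,\xi) + R(t,\xi),\qquad \|R(t)\|_{L^\infty_\xi}\lesssim C_1^3\eps^3 t^{-1-\delta}.
\]
Multiplying by $\overline{\wh f(t,\xi)}$ and taking real parts gives the scalar ODE
\[
\partial_t|\wh f(t,\xi)|^2 = \frac{2c_0\,\Im\lambda_4}{t}|\wh f(t,\xi)|^4 + O\bigl(C_1^4\eps^4 t^{-1-\delta}\bigr),
\]
which under the bootstrap integrates to $|\wh f(t,\xi)|^2\le \eps^2 + CC_1^4\eps^4\log t$, remaining $\le (C_1\eps/2)^2$ precisely on $\log t\lesssim \eps^{-2}$, i.e., up to $T_\eps = \exp(1/(c\eps^2))$.

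The principal obstacle is isolating the resonance $t^{-1}|\wh f|^2\wh f$ from the $|u|^2u$ integral with an $L^\infty_\xi$ remainder that is integrable in time, using only the bootstrap norms. When frequency-space integration by parts transfers a $\partial_\xi$ onto $\wh f$, that derivative is controlled only in $L^2_\xi$ and with a $t^{1/4}$ loss, so the remainder has to be handled by a trilinear H\"older-type estimate of the form $L^2_\xi\times L^\infty_\xi\times L^\infty_\xi$ against the localized stationary-phase window, balancing the $t^{1/4}$ loss against a $t^{-5/4}$ gain from nondegeneracy of the phase. This balance is essentially sharp, and is precisely why the method yields almost-global (rather than global) existence at the time scale $\exp(C\eps^{-2})$.
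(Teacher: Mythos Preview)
There are two genuine gaps.

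\textbf{The $Ju$ estimate.} Your Leibniz/commutation approach does not close for the non-gauge-invariant nonlinearities. The identity $J(u^3)=3u^2Ju-2xu^3$ together with $xu=Ju-it\partial_x u$ gives $J(u^3)=u^2Ju+2it\,u^2\partial_x u$, and the second term has size $t\|u\|_{L^\infty}^2\|\partial_x u\|_{L^2}\sim\eps^2\|\partial_x u\|_{L^2}$ under the bootstrap, with \emph{no} time decay. Feeding this into the energy inequality yields
\[
\|Ju(t)\|_{L^2}\ \lesssim\ \eps+\eps^2\int_1^t\|\partial_x u(s)\|_{L^2}\,ds\ \sim\ \eps+\eps^3\,t,
\]
which destroys any $t^{1/4}$ bootstrap long before $T_\eps$. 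The same obstruction arises for $\bar u^3$ and $|u|^2\bar u$, since $J\bar u=2x\bar u-\overline{Ju}$ reintroduces the $x$-weight. The clean identity $J(|u|^2u)=2|u|^2Ju-u^2\overline{Ju}$, with no residual $x$-term, is special to the gauge-invariant nonlinearity. In the paper the bound on $\|Ju\|_{L^2}=\|\partial_\xi\wh f\|_{L^2}$ is obtained by differentiating the Duhamel formula for $\wh f$ in $\xi$ and then running the full space-time resonance decomposition on the terms containing the factor $s\,\partial_\xi\Phi_j$; the $t^{1/4}$ growth emerges from that analysis (specifically from boundary terms after integration by parts in $s$), not from a direct energy inequality.

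\textbf{The $L^\infty_\xi$ estimate.} Your claim that the non-gauge-invariant contributions to $\wh f$ are uniformly $O(\eps^3)$ is not correct. You are right that at the $(\eta,\sigma)$-stationary point each phase $\Phi_j$ evaluates to a nonzero multiple of $\xi^2$, but the normal form dividing by this value then degenerates at $\xi=0$. Equivalently, the space-time resonant set for $\bar u^3$, $u^3$, $|u|^2\bar u$ is the origin $\xi_1=\xi_2=\xi_3=0$ (forcing $\xi=0$), and its contribution is $\int_1^t s^{-1}\|\wh f(s)\|_{L^\infty_\xi}^3\,ds\sim \eps^3\log t$ from a low-frequency volume bound. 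This logarithm --- not the $\Im\lambda_4$ term in your ODE --- is what limits the lifespan to $\exp(c^{-1}\eps^{-2})$; in particular your argument would wrongly give global existence whenever $\lambda_4=0$ (or $\lambda_4\in\R$). The paper treats this by introducing a time-dependent cutoff at $|\xi_j|\sim s^{-1/2}$, using volume bounds near the origin and integration by parts (in $s$, $\eta$, or $\sigma$, as the phase dictates) on the complement.
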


In \eqref{nls}
we have decided to set the initial time $t_0=1$ for notational convenience. One could of course take $t_0=0$ with minor modifications.
Moreover, the same result also holds for negative times $t \in [-T_\eps,0]$.

It is important to observe that without imposing further conditions on the initial data or the coefficients in the nonlinearity,
Theorem~\ref{thm:main} is essentially sharp. To demonstrate this, we consider the particular model
\begin{equation}\label{model}
(i\partial_t+\tfrac12\partial_{xx})u = i|u|^2 u
\end{equation}
and show that solutions either blow up or exhibit norm growth after the almost global existence time.
The idea is that for sufficiently small data, certain ODE dynamics will dictate the behavior of the solution.
The particular model \eqref{model} has the following advantages:
(i) solutions to the ODE blow up in finite time,
and (ii) since $|u|^2 u$ is gauge-invariant, we get better estimates for $u$ than those appearing in Theorem~\ref{thm:main},
specifically, a slower growth rate for the $L_x^2$-norm of $(x+it\partial_x)u$.
Thanks to (i) we need not fine-tune the initial conditions to make the arguments work,
while (ii) allows us to show in a fairly straightforward fashion that the ODE can accurately model the PDE for long times.
The precise result we prove is the following.

\begin{theorem}[Norm growth]\label{thm:grow} There exists $\eps>0$ sufficiently small that the following holds.
Suppose $u_1\in\Sigma$ satisfies
\begin{equation}\label{asmp:u1}
\|u_1\|_{\Sigma} = \eps, \quad \|\wh{u_1}\|_{L_\xi^\infty}\geq\tfrac{1}{2}\eps,
\end{equation}
and let $u\in C([1,T_{\eps}];\Sigma)$ be the solution to \eqref{model} with $u(1)=u_1$ given by Theorem~\ref{thm:main}. In particular, $T_\eps=\exp(\tfrac{1}{c\eps^2})$ for some $c>0$, and
\[
\sup_{t\in[1,T_\eps]} \|\wh{u}(t)\|_{L_\xi^\infty}\lesssim\eps.
\]
Denoting by $T_{\max}\in(T_\eps,\infty]$ the maximal time of existence, there exists an absolute constant $K\gg\eps^2$
and a finite time $T_K> T_{\eps}$ such that either $T_{\max}\leq T_K$, or there exists $t\in [T_\eps,T_K]$ such that
\[
\|\wh{u}(t)\|_{L_\xi^\infty}^2 \geq K.
\]
\end{theorem}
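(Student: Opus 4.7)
The plan is to reduce the PDE at each fixed frequency $\xi$ to a scalar ODE on the profile side, and then compare the PDE solution to this ODE on $[1,T_K]$ at a frequency where the initial data is large. Pass to the profile $f(t):=e^{-it\partial_{xx}/2}u(t)$, so that $|\wh f(t,\xi)|=|\wh u(t,\xi)|$. A standard stationary phase analysis of $e^{it\xi^2/2}\widehat{|u|^2u}(t,\xi)$, whose cubic oscillatory phase is stationary on the resonant diagonal $\eta=\sigma=\xi$ with indefinite Hessian signature, yields
\[
\partial_t\wh f(t,\xi)=\frac{c}{t}\,|\wh f(t,\xi)|^2\,\wh f(t,\xi)+R(t,\xi),
\]
for some positive real constant $c$. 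The factor of $i$ in the nonlinearity is precisely what makes this coefficient real (rather than purely imaginary as in the usual modified-scattering analyses \cite{KatoPusateri, HN0, IfrimTataru}), producing genuine amplitude growth in place of a phase correction.

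Setting $a(t,\xi):=|\wh f(t,\xi)|^2$ yields the scalar equation $\partial_t a=\tfrac{2c}{t}a^2+2\Re(\overline{\wh f}\,R)$. By the hypothesis $\|\wh{u_1}\|_{L^\infty_\xi}\geq\tfrac12\eps$, fix $\xi_0$ with $a(1,\xi_0)\geq y_0:=\tfrac14\eps^2$. The model ODE $\dot y=\tfrac{2c}{t}y^2$ with $y(1)=y_0$ has the explicit blowup solution $y(t)=y_0/(1-2cy_0\log t)$, which reaches the threshold $K$ at
\[
T_K\;:=\;\exp\!\Bigl(\tfrac{1}{2cy_0}-\tfrac{1}{2cK}\Bigr)\;=\;\exp\!\Bigl(\tfrac{2}{c\eps^2}(1+o(1))\Bigr),
\]
which is strictly larger than $T_\eps$ as soon as $K\gg\eps^2$ and absolute constants are arranged.

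The main obstacle is controlling $R$ on the interval $[1,T_K]$, whose length is $\log T_K\sim\eps^{-2}$; even a critically decaying error $\sim\eps^3/t$ would accumulate to the same size as the leading dynamics. Gauge invariance is decisive here --- this is property (ii) advertised in the paper. Because $|u|^2u$ preserves $\|u\|_{L_x^2}$, on the bootstrap interval one can upgrade the $t^{1/4}$ growth of $\|(x+it\partial_x)u\|_{L_x^2}$ available from Theorem~\ref{thm:main} to $t^{\delta}$ for arbitrarily small $\delta>0$. Feeding this improvement into the standard stationary phase remainder estimates yields
\[
\|R(t,\cdot)\|_{L^\infty_\xi}\;\lesssim\;\eps^3\,t^{-1-\delta}.
\]
Since $a(t,\xi_0)\geq y_0=\Theta(\eps^2)$ on the bootstrap region, the identity $\partial_t(1/a)=-2c/t+O(|R|/|\wh f|^3)$ gives $\int_1^{T_K}|R(s,\xi_0)|/|\wh f(s,\xi_0)|^3\,ds\lesssim\int_1^\infty s^{-1-\delta}\,ds=O(1)$.

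To conclude, run a continuity/bootstrap argument: assume $a(t,\xi_0)\in[\tfrac12 y_0,\,2K]$ on $[1,T]$ with $T\leq\min(T_K,T_{\max})$. Then the error control above gives $|1/a(t,\xi_0)-1/y(t)|=O(1)$ uniformly in $t\in[1,T]$, which is much smaller than the baseline $1/y_0-1/K=\Theta(\eps^{-2})$. Hence $a(t,\xi_0)$ closely tracks $y(t)$ and attains size $K$ at some $t\leq T_K$. Since $a(t,\xi_0)\lesssim\eps^2\ll K$ throughout $[1,T_\eps]$ by Theorem~\ref{thm:main}, any such $t$ necessarily lies in $[T_\eps,T_K]$, establishing the norm-growth alternative. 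If instead the bootstrap closes only because $T_{\max}\leq T_K$, the first alternative of the theorem holds.
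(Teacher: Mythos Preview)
Your overall strategy---reduce to a scalar ODE for $|\wh f(t,\xi)|^2$, exploit gauge invariance to get an improved $\|Ju\|_{L^2}$ growth rate and hence an integrable-in-time remainder, then compare to the explicit blow-up solution---is exactly the paper's approach. Two points need correction, one minor and one structural.

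First, the nonlinearity $i|u|^2u$ does \emph{not} preserve $\|u\|_{L^2_x}$: a direct computation gives $\partial_t\|u\|_{L^2}^2=2\|u\|_{L^4}^4>0$. What gauge invariance actually buys is the pointwise identity $J(|u|^2u)=M(it\partial_x)(|\bar Mu|^2\bar Mu)$, so that $\|J(|u|^2u)\|_{L^2}\lesssim\|u\|_{L^\infty}^2\|Ju\|_{L^2}$; Gronwall then upgrades the $t^{1/4}$ growth to $t^{C\eps^2}\le t^\delta$. The conclusion you want is correct, but the stated reason is not.

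The more serious issue is the bootstrap. Your continuity hypothesis $a(t,\xi_0)\in[\tfrac12 y_0,2K]$ controls $\wh f$ only at the single frequency $\xi_0$, yet the remainder bound $\|R(t)\|_{L^\infty_\xi}\lesssim\eps^3 t^{-1-\delta}$ requires control of $\|\wh f(t)\|_{L^\infty_\xi}$ and $\|Ju(t)\|_{L^2}$ (the latter via another Gronwall that itself needs $\|u\|_{L^\infty_x}\lesssim t^{-1/2}$, hence again $\|\wh f\|_{L^\infty_\xi}$). Theorem~\ref{thm:main} supplies these only on $[1,T_\eps]$; on $(T_\eps,T_K]$ your bootstrap assumption gives you nothing to work with, so ``the error control above'' is unjustified there. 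The paper repairs this by framing the endgame as a proof by contradiction with the theorem's conclusion: assume $\|\wh f(t)\|_{L^\infty_\xi}\le K^{1/2}$ for all $t\in[T_\eps,T_K]$, use that global hypothesis to propagate the weighted bounds with constant $\mu=K^{1/2}$ in place of $\eps$, and obtain a remainder of size $K^2 t^{-1-\delta}$ on that interval. The loss of $K^2$ versus $\eps^4$ is harmless because the time integral now starts at $T_\eps$ and picks up the tiny factor $T_\eps^{-\delta}=\exp(-\delta/(c\eps^2))$. Once you restructure your argument this way (or equivalently bootstrap on $\|\wh f\|_{L^\infty_\xi}$ rather than on $a(\cdot,\xi_0)$), your $1/a$ versus $1/y$ comparison goes through; just be sure to check that the resulting $O(1)$ error is in fact small relative to $1/K$, since $K$ is an absolute constant and not $\eps$-small.
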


\begin{remarks}\text{ }
\setlength{\leftmargini}{1.5em}
\begin{itemize}
\item The proof will show that we could take, for example, $K = (200c)^{-1}$.
This means that $K$ is a small but fixed constant independent from $\eps$ and, in particular, large compared to $\eps^2$.

\item The time $T_K$ is the time at which the associated ODE solution reaches size $4K$; see \eqref{def:TK}.

\item By the standard local theory (see below), if $T_{\max}<\infty$ then $\|u(t)\|_{L_x^2}\to\infty$ as $t\to T_{\max}$.

\item With trivial modifications, our arguments apply to \eqref{model} with a nonlinearity of the form $\lambda |u|^2 u$ with $\Im \lambda > 0$.\footnote{The case $\Im \lambda = 0$ reduces to \eqref{gauge-nls}, while for $\Im \lambda <0$
one can prove that small solutions exist on $[1,\infty)$ and have `dissipative' behavior, namely, additional logarithmic time decay \cite{Shimomura}.  This same dissipative behavior occurs for \eqref{model} in the negative time direction.}
\end{itemize}
\end{remarks}

The strategy described above, namely, deducing behavior about solutions from associated ODE dynamics, has been carried out in many previous works.  In the case of NLS with the $|u|^2 u$ nonlinearity, this approach leads to a proof of modified scattering \cite{HN0, KatoPusateri, IfrimTataru}.
In other cases, for some specific nonlinearities and well-prepared initial data one can prove global existence 
and describe the asymptotics \cite{HN1, HN2, HN3, HN4, HN6, HN7, Naumkin}.
In our case, we pick an equation for which the ODE solutions blow up; accordingly, we can demonstrate norm growth.
This example demonstrates that one cannot hope to improve on Theorem~\ref{thm:main} without imposing some more specific conditions.

	\subsection{Strategy of the proof of Theorem~\ref{thm:main}}
	\label{sec:proof}
We begin by recalling the standard local theory for \eqref{nls}.
			\begin{theorem}
			[Local well-posedness]
			\label{thm:lwp}
For $u_1\in L_x^2$ the initial-value problem
			\begin{equation}
			\nonumber
			\left\{\begin{array}{ll}
			(i\partial_t+\tfrac12\partial_{xx})u=
			\lambda_1\bar{u}^3+\lambda_2u^3+\lambda_3\vert u\vert^2\bar{u}+\lambda_4\vert u\vert^2 u
			\\ u(1)=u_1
			\end{array}
			\right.
			\end{equation}
has a unique solution $u\in C([1,T]; L_x^2)$, with $T\sim 1+\|u_1\|_{L_x^2}^{-4}$, obeying
			\begin{equation}
			\label{eq:duh}
		u(t)=e^{i(t-1)\partial_{xx}/2}u_1-i\int_1^t
		e^{i(t-s)\partial_{xx}/2}[
		\lambda_1\bar{u}^3(s)+\lambda_2u^3(s)+\lambda_3\vert u\vert^2\bar{u}(s)+\lambda_4\vert u\vert^2 u(s)]\,ds.
			\end{equation}
Furthermore, if $u_1\in\Sigma$ then $u\in C([1,T];\Sigma).$
			\end{theorem}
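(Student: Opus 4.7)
The plan is to run a standard Picard contraction argument for the Duhamel map
\begin{equation*}
\Phi(u)(t) := e^{i(t-1)\partial_{xx}/2}u_1 - i\int_1^t e^{i(t-s)\partial_{xx}/2}\,N(u)(s)\,ds,
\end{equation*}
where $N(u):=\lambda_1\bar{u}^3+\lambda_2 u^3+\lambda_3|u|^2\bar{u}+\lambda_4|u|^2 u$. The natural Strichartz space in one space dimension is $X_T := C([1,T];L_x^2)\cap L_t^4([1,T];L_x^\infty)$, since the pair $(q,r)=(4,\infty)$ satisfies the 1D admissibility condition $\tfrac{2}{q}+\tfrac{1}{r}=\tfrac{1}{2}$. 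The homogeneous and inhomogeneous Strichartz estimates then give
\begin{equation*}
\|\Phi(u)\|_{X_T} \leq C\|u_1\|_{L_x^2} + C\|N(u)\|_{L_t^1([1,T];L_x^2)}.
\end{equation*}

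Since each of the four monomials in $N(u)$ is pointwise bounded by $|u|^3$, I would estimate the cubic nonlinearity by H\"older in time and space:
\begin{equation*}
\|N(u)\|_{L_t^1 L_x^2}\lesssim \|u\|_{L_t^\infty L_x^2}\,\|u\|_{L_t^2 L_x^\infty}^2\lesssim (T-1)^{1/2}\|u\|_{X_T}^3,
\end{equation*}
where the last inequality uses the trivial embedding $L_t^2([1,T])\hookrightarrow (T-1)^{1/4}L_t^4([1,T])$. An analogous bound for $\Phi(u)-\Phi(v)$ carries an extra quadratic factor in $\|u\|_{X_T}+\|v\|_{X_T}$. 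Choosing $T-1$ equal to a small multiple of $\|u_1\|_{L_x^2}^{-4}$ therefore makes $\Phi$ a contraction on the closed ball of radius $2C\|u_1\|_{L_x^2}$ in $X_T$, producing the unique fixed point $u\in C([1,T];L_x^2)$ obeying \eqref{eq:duh}. Note that with this choice of $T$ one has $(T-1)^{1/2}\|u\|_{X_T}^2=O(1)$, which I use below.

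For the persistence of the $\Sigma$-regularity I would switch from contraction to energy estimates plus Gronwall, since the weighted norm need not remain of size $\|u_1\|_\Sigma$ on the full interval $[1,T]$. Because $\partial_x$ commutes with $i\partial_t+\tfrac12\partial_{xx}$, the derivative $\partial_x u$ satisfies a linear Schr\"odinger equation with forcing $\partial_x N(u)$; pairing with $\partial_x u$ in $L_x^2$ and using $\|\partial_x N(u)\|_{L_x^2}\lesssim \|u\|_{L_x^\infty}^2\|\partial_x u\|_{L_x^2}$, Gronwall together with $\int_1^T\|u\|_{L_x^\infty}^2\,dt=O(1)$ gives $\|\partial_x u(t)\|_{L_x^2}\lesssim \|\partial_x u_1\|_{L_x^2}$ uniformly on $[1,T]$. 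For the weight, the commutator relation $[i\partial_t+\tfrac12\partial_{xx},x]=\partial_x$ yields the equation $Lxu=xN(u)+\partial_x u$ with $L=i\partial_t+\tfrac12\partial_{xx}$; pairing with $xu$ and using $\|xN(u)\|_{L_x^2}\lesssim \|u\|_{L_x^\infty}^2\|xu\|_{L_x^2}$---with the $\partial_x u$ source contributing a linear-in-$t$ term---a second Gronwall produces the finite bound $\|xu(t)\|_{L_x^2}\leq C(\|xu_1\|_{L_x^2}+T\|\partial_x u_1\|_{L_x^2})$ on $[1,T]$; continuity in $t$ is standard. I do not anticipate any real obstacle, since cubic NLS in one space dimension is $L_x^2$-subcritical and the $(T-1)^{1/2}$ gain from H\"older in time is precisely the manifestation of this subcriticality that closes both the Strichartz contraction and the $\Sigma$ energy estimates.
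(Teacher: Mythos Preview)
Your argument is correct and follows essentially the same route the paper indicates: contraction mapping via Strichartz estimates for the $L_x^2$ theory, and persistence of regularity by commuting the equation with $\partial_x$ and a weight operator. The paper does not give a detailed proof; it only sketches this outline and refers to Cazenave.

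One small variation: the paper suggests using $J(t)=x+it\partial_x$ rather than $x$ for the weighted norm. Since $[i\partial_t+\tfrac12\partial_{xx},J]=0$, this eliminates the explicit $\partial_x u$ source term in the equation for the weighted quantity. For the gauge-invariant piece $|u|^2u$ the identity $J=M(t)\,it\partial_x\,\bar M(t)$ gives a clean Leibniz-type bound $\|J(|u|^2u)\|_{L_x^2}\lesssim\|u\|_{L_x^\infty}^2\|Ju\|_{L_x^2}$; for the non-gauge-invariant pieces $u^3,\bar u^3,|u|^2\bar u$ one still picks up terms of the form $t\,u^2\partial_x u$, so a prior bound on $\partial_x u$ is needed in either approach. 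Your choice of $x$ is therefore not materially different, just a slightly different bookkeeping of the same commutator contribution.
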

The existence in $C([1,T];L_x^2(\R))$ follows from the standard arguments, namely contraction mapping and Strichartz estimates.
The fact that the time of existence depends only on the {norm} of the data is a consequence of scaling. The existence in $C([1,T];\Sigma)$ follows from standard persistence of regularity arguments, which involve commuting the equation with $\partial_x$ and $J(t)=x+it\partial_x$.  We refer the reader to the textbook \cite{Caz} and	 the references cited therein.

For a solution $u$ we define
$$f(t)=e^{-it\partial_{xx}/2}u(t), \qquad Ju(t)=(x+it\partial_x)u(t).$$
The proof of Theorem \ref{thm:main} will be based on a bootstrap argument in a properly chosen norm.  To this end we introduce the notation
		$$\|u(t)\|_{X(t)}:=
		\tfrac12\bigl[\|\wh{f}(t)\|_{L_\xi^\infty}+
		t^{-\frac14}\|Ju(t)\|_{L_x^2}\bigr].$$
We record here two facts that we prove in Section~\ref{sec:decay}.
		\begin{lemma}
		\label{lem:decay}
The following estimates hold:
		\begin{align}
		\label{eq:Xdata}
		\|u(1)\|_{X(1)}&\leq \|u_1\|_{\Sigma},
		\\
		\label{eq:Xdecay}
		t^{\frac12}\|u(t)\|_{L_x^\infty}
		&\lesssim
		\|u(t)\|_{X(t)}.
		\end{align}
		\end{lemma}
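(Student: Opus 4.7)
For \eqref{eq:Xdata}, at $t=1$ we have $\widehat{f(1)}(\xi) = e^{i\xi^2/2}\widehat{u_1}(\xi)$, so $\|\widehat{f(1)}\|_{L_\xi^\infty} = \|\widehat{u_1}\|_{L_\xi^\infty}$, and $Ju(1) = xu_1 + i\partial_x u_1$. Since $t^{-1/4}|_{t=1} = 1$, the triangle inequality handles $\|Ju(1)\|_{L_x^2} \leq \|xu_1\|_{L_x^2} + \|\partial_x u_1\|_{L_x^2}$. For $\|\widehat{u_1}\|_{L_\xi^\infty}$, I would use the 1D Gagliardo--Nirenberg inequality applied to $\widehat{u_1}$ together with Plancherel:
\[
\|\widehat{u_1}\|_{L_\xi^\infty} \lesssim \|\widehat{u_1}\|_{L_\xi^2}^{1/2}\|\partial_\xi\widehat{u_1}\|_{L_\xi^2}^{1/2} \lesssim \|u_1\|_{L_x^2}^{1/2}\|xu_1\|_{L_x^2}^{1/2} \lesssim \|u_1\|_\Sigma.
\]
The factor $\tfrac12$ in the definition of $\|\cdot\|_{X(t)}$ is designed precisely to absorb the sharp constants and close \eqref{eq:Xdata} without any extra factor.

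For \eqref{eq:Xdecay}, I would use the classical representation of the free Schr\"odinger propagator via a Fourier transform of a phase-modulated function. Writing $u(t) = e^{it\partial_{xx}/2}f(t)$ as a convolution with the kernel $(2\pi it)^{-1/2}e^{i(x-y)^2/(2t)}$ and completing the square gives
\[
u(t,x) = \frac{1}{\sqrt{2\pi it}}\,e^{ix^2/(2t)}\,\widehat{M_t f(t)}(x/t), \qquad M_t(y) := e^{iy^2/(2t)},
\]
hence $t^{1/2}\|u(t)\|_{L_x^\infty} \lesssim \|\widehat{M_t f(t)}\|_{L_\xi^\infty}$. Now split $M_t = 1 + (M_t-1)$: the contribution of the $1$-piece is exactly $\|\widehat{f(t)}\|_{L_\xi^\infty}$, which already appears in $\|u(t)\|_{X(t)}$, and the error is bounded by $\|(M_t-1)f(t)\|_{L_y^1}$ via the trivial $L^1\to L_\xi^\infty$ Fourier estimate. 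To bound this $L^1$-norm, split the integral at $|y|=\sqrt t$: on $\{|y|<\sqrt t\}$ use $|M_t-1|\leq y^2/(2t)$, and on $\{|y|\geq\sqrt t\}$ use $|M_t-1|\leq 2$. Applying Cauchy--Schwarz on each piece after pulling out a factor of $|y|$ or $|y|^{-1}$ respectively, the two contributions match in $t$-scaling and combine to
\[
\|(M_t-1)f(t)\|_{L_y^1} \lesssim t^{-1/4}\|yf(t)\|_{L_y^2}.
\]
Finally, the intertwining identity $Ju(t) = e^{it\partial_{xx}/2}(xf(t))$ (easily verified from $[x,e^{it\partial_{xx}/2}] = -it\partial_x\,e^{it\partial_{xx}/2}$) together with the $L_x^2$-unitarity of $e^{it\partial_{xx}/2}$ give $\|yf(t)\|_{L_y^2} = \|Ju(t)\|_{L_x^2}$, and putting everything together yields \eqref{eq:Xdecay}.

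The only step that really requires care is the choice of cutoff at $|y|=\sqrt t$ in the $L^1$-estimate for $(M_t-1)f$: this is exactly the scale at which the small-$y$ and large-$y$ regions balance, and it is what produces the $t^{-1/4}$ weight matching the definition of $\|\cdot\|_{X(t)}$. Everything else is bookkeeping with Fourier identities and the triangle inequality.
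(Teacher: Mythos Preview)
Your argument is correct. Both parts differ in execution from the paper's proof, though the underlying ideas are the same.

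For \eqref{eq:Xdata}, the paper bounds $\|\wh{u_1}\|_{L_\xi^\infty}$ via Hausdorff--Young and Cauchy--Schwarz in physical space: $\|\wh{u_1}\|_{L_\xi^\infty}\le (2\pi)^{-1/2}\|u_1\|_{L_x^1}\le\pi^{-1/2}\|(1+|x|)u_1\|_{L_x^2}$, which together with the factor $\tfrac12$ gives the sharp inequality directly. Your Gagliardo--Nirenberg route also closes: with the sharp one-dimensional constant $\|g\|_{L^\infty}\le\|g\|_{L^2}^{1/2}\|g'\|_{L^2}^{1/2}$ and AM--GM one gets coefficients $\tfrac14,\tfrac34,\tfrac12$ in front of $\|u_1\|_{L_x^2},\|xu_1\|_{L_x^2},\|\partial_x u_1\|_{L_x^2}$, all at most $1$. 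You were a little vague about this last step (``designed precisely to absorb the sharp constants''); it does work, but only because the sharp GN constant is $1$.

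For \eqref{eq:Xdecay}, the paper performs the dual decomposition: after reaching $\|\F[M(t)-1]f\|_{L_\xi^\infty}$ it splits in the \emph{output} frequency variable using Littlewood--Paley projections $\bar P_{\le\sqrt t}$ and $\bar P_{>\sqrt t}$, then uses Bernstein on the high piece and Hausdorff--Young plus $|M-1|\lesssim t^{-1/2}|x|$ on the low piece. Your physical-space split at $|y|=\sqrt t$ is more elementary (no Littlewood--Paley, no Bernstein) and gives the same $t^{-1/4}$; the paper's version has the minor advantage that the frequency projections and Bernstein machinery are reused repeatedly later in Sections~\ref{sec:fhat}--\ref{sec:Ju}, so introducing them here is natural in context.
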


The next two propositions are the main ingredients for the bootstrap argument used to prove Theorem~\ref{thm:main}; they constitute the heart of the paper. 

		\begin{proposition}
		\label{prop:fhat}
For $u:[1,T]\times\R\to\C$ a solution to \eqref{nls} and $1\leq t\leq T$, there exists an absolute constant $C>0$ such that
		\begin{align}
		\label{eq:fhat bootstrap}
		\|\wh{f}(t)\|_{L_\xi^\infty} 
		& \leq \|\wh{f}(1)\|_{L^\infty_\xi} + C\Big[
		\|u_1\|_{\Sigma}^3 + \|u(t)\|_{X(t)}^3 + \int_1^t s^{-1}\big(\|u(s)\|_{X(s)}^3+\|u(s)\|_{X(s)}^5\big)\,ds \Big].
		\end{align}
		\end{proposition}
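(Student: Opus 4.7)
\noindent\textit{Proof plan.} Differentiating $f = e^{-it\partial_{xx}/2} u$ and using \eqref{nls} yields the Fourier-side ODE
\[
\partial_t \widehat{f}(t,\xi) \;=\; -i e^{it\xi^2/2}\widehat{N(u)}(t,\xi), \qquad N(u) \;=\; \sum_{j=1}^4 \lambda_j N_j(u),
\]
with $(N_1,N_2,N_3,N_4) = (\bar u^3, u^3, |u|^2\bar u, |u|^2 u)$. Integrating in time and taking moduli, it is enough to bound $\bigl|\int_1^t e^{is\xi^2/2}\widehat{N_j(u)}(s,\xi)\,ds\bigr|$ uniformly in $\xi$ by the bracketed right-hand side of \eqref{eq:fhat bootstrap}, for each $j\in\{1,2,3,4\}$.

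The main tool is the standard stationary-phase representation of the linear Schr\"odinger group,
\[
u(t,x) \;=\; (it)^{-1/2}\, e^{ix^2/(2t)}\, \widehat{f}(t, x/t) \;+\; R(t,x),
\]
where the remainder satisfies $\|R(t)\|_{L_x^\infty} \lesssim t^{-1/2-\delta}\|u(t)\|_{X(t)}^\sigma$ for some $\delta > 0$ and $\sigma\in(0,1)$, obtained by Taylor-expanding $\widehat{f}$ around the stationary frequency $\xi = x/t$ and using $\|\partial_\xi \widehat{f}\|_{L^2_\xi} = \|Ju\|_{L^2_x}$ together with the $X(t)$-bound on $Ju$. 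Inserting this decomposition into each $N_j(u)$, Fourier-transforming, and carrying out a further one-dimensional stationary-phase argument in the variable $y = x/t$ produces the identity
\[
e^{is\xi^2/2}\, \widehat{N_j(u)}(s,\xi) \;=\; \frac{c_j}{s}\, e^{is\Phi_j(\xi)}\, \mathcal{T}_j(\widehat{f}(s))(\xi) \;+\; E_j(s,\xi),
\]
in which $c_j \in \mathbb{C}$, $\Phi_j(\xi) = a_j\xi^2$ is real ($a_4 = 0$ corresponds to the gauge-invariant piece, while $a_j \ne 0$ for $j\ne 4$), $\mathcal{T}_j$ is a cubic monomial in $\widehat{f}$ and $\overline{\widehat{f}}$ evaluated at a single rescaled frequency (in particular $\mathcal{T}_4(\widehat{f})(\xi) = |\widehat{f}(\xi)|^2 \widehat{f}(\xi)$ and $\mathcal{T}_2(\widehat{f})(\xi) = \widehat{f}(\xi/3)^3$), and $E_j$ is an error containing at least one factor of $R$.

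For the main term, the pointwise bound $|\mathcal{T}_j(\widehat{f})(\xi)| \leq \|\widehat{f}\|_{L^\infty_\xi}^3 \leq \|u(s)\|_{X(s)}^3$ immediately gives
$\bigl|\int_1^t s^{-1} e^{is\Phi_j(\xi)}\mathcal{T}_j\, ds\bigr| \leq C\int_1^t s^{-1}\|u(s)\|_{X(s)}^3\, ds$,
which is precisely the integrated cubic term in \eqref{eq:fhat bootstrap}. Notably this works uniformly for all four $j$, although for $j \ne 4$ one could additionally integrate by parts in $s$ against the non-vanishing factor $e^{is\Phi_j(\xi)}$ to extract extra decay---this sharper manipulation is what drives the modified-scattering asymptotics, but it is not needed here. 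For the error $E_j$ one expands the cubic product $N_j(u_{\mathrm{main}} + R)$ into pieces indexed by the number of $R$-factors, and estimates each piece by H\"older/Young bounds on the Fourier side, producing a pointwise-in-$\xi$ bound of the form $|E_j(s,\xi)| \lesssim s^{-1-\delta}\bigl(\|u(s)\|_{X(s)}^3 + \|u(s)\|_{X(s)}^5\bigr)$; the $s$-integral of this error, together with the trivially-handled short-time contribution from $s\in[1,2]$ (controlled by $\|u_1\|_\Sigma^3$ via local well-posedness), is absorbed into the $\|u_1\|_\Sigma^3$, $\|u(t)\|_{X(t)}^3$, and $\int_1^t s^{-1}\|u\|_{X}^5\,ds$ contributions of \eqref{eq:fhat bootstrap}. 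The main obstacle is exactly this error analysis: one must verify that no $R$-piece produces a non-integrable $s$-factor, which requires balancing the improved decay of $R$ against the three components of $\|u\|_X$---especially $\|Ju\|_{L^2}\lesssim t^{1/4}\|u\|_X$ used via its identification with $\|\partial_\xi \widehat{f}\|_{L^2}$---and occasionally re-invoking the ODE for $\widehat{f}$ to simplify time-derivatives arising in intermediate terms.
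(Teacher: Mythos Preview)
Your route differs substantially from the paper's, and as written it has a real gap in the error analysis for the non-gauge-invariant pieces.

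\textbf{What the paper does.} For $\bar u^3$, $u^3$, $|u|^2\bar u$ the paper does \emph{not} use the physical-space expansion $u=u_{\mathrm{main}}+R$. It works directly with the trilinear convolution integrals on the Fourier side, introduces a time-dependent frequency cutoff at scale $s^{-1/2}$, and partitions the $(\eta,\sigma)$-plane into regions on which at least one of the phase or its $\eta$- or $\sigma$-derivative is bounded below by $|\vec\xi\,|$ or $|\vec\xi\,|^2$. On each region it integrates by parts in the corresponding variable and closes with Coifman--Meyer trilinear bounds; the integration by parts in $s$ is what produces the quintic $\|u\|_X^5$ term and the boundary contributions $\|u_1\|_\Sigma^3+\|u(t)\|_{X(t)}^3$. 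Only the gauge-invariant nonlinearity $|u|^2u$ is handled by the factorization argument you describe, precisely because there the $e^{ix^2/2s}$ phases cancel and no second stationary-phase step is required.

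\textbf{The gap.} First, the assertion $\|R(t)\|_{L^\infty_x}\lesssim t^{-1/2-\delta}\|u\|_X^\sigma$ with $\delta>0$ is false for the $X$-norm used here: since $\|Ju\|_{L^2}\le t^{1/4}\|u\|_X$, one only obtains $\|R\|_{L^\infty}\lesssim t^{-3/4}\|Ju\|_{L^2}\lesssim t^{-1/2}\|u\|_X$, the \emph{same} rate as $u_{\mathrm{main}}$. So the ``$R$-pieces'' carry no extra smallness. Second---and this is the main omission---your discussion of $E_j$ only treats the terms in $N_j(u_{\mathrm{main}}+R)$ containing at least one $R$; it says nothing about the error from the second stationary-phase step applied to $N_j(u_{\mathrm{main}})$ itself when $j\ne4$. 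For $u^3$, say, that error is
\[
s^{-1/2}\int_{\R} e^{3isw^2/2}\Bigl[\widehat f\bigl(w+\tfrac{\xi}{3}\bigr)^3-\widehat f\bigl(\tfrac{\xi}{3}\bigr)^3\Bigr]\,dw,
\]
to be bounded uniformly in $\xi$. This is precisely where the non-gauge-invariance bites, and it is not handled by ``H\"older/Young bounds on the Fourier side.'' One can in fact control it---for instance via an estimate of the type $|(e^{i\tau\partial_x^2/2}-1)G(0)|\lesssim\tau^{1/4}\|G'\|_{L^2}$ with $\tau\sim s^{-1}$ and $G=\widehat f(\cdot+\xi/3)^3$---but this yields only the borderline rate $s^{-1}\|u\|_X^3$, never $s^{-1-\delta}$. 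So the overall scheme is salvageable, but the proposal neither isolates nor carries out this step, and the claimed extra $s^{-\delta}$ decay is unavailable with this $X$-norm.
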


		\begin{proposition}
		\label{prop:Ju}
For $u:[1,T]\times\R\to\C$ a solution to \eqref{nls} and $1\leq t\leq T$,  there exists an absolute constant $C>0$ such that
		\begin{align}
		\label{eq:Ju bootstrap}
		\|Ju(t)\|_{L_x^2}
		& \leq \|J u(1)\|_{L_x^2} + C\Bigl[ \|u_1\|_{\Sigma}^3 + t^{\frac14}\|u(t)\|_{X(t)}^3
		+ \int_1^t s^{-\frac34}\big(\|u(s)\|_{X(s)}^3 + \|u(s)\|_{X(s)}^5\big)\,ds \Bigr].
		\end{align}
		\end{proposition}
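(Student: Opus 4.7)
The starting point is a Duhamel-type identity for $Ju$. The commutation $J(t)\,e^{i(t-s)\partial_{xx}/2}=e^{i(t-s)\partial_{xx}/2}\,J(s)$ (verified directly using $[\partial_{xx},x]=2\partial_x$) applied to \eqref{eq:duh} yields
\[
Ju(t) \;=\; e^{i(t-1)\partial_{xx}/2}\,J(1)u(1) \;-\; i\int_1^t e^{i(t-s)\partial_{xx}/2}\,J(s)N(u(s))\,ds,
\]
with $N(u)=\lambda_1\b{u}^3+\lambda_2 u^3+\lambda_3|u|^2\b{u}+\lambda_4|u|^2u$. By unitarity of the propagator on $L^2$ and $\|J(1)u(1)\|_{L^2}\leq \|u_1\|_\Sigma$, it suffices to control the $L^2$ norm of the Duhamel integral.

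For the gauge-invariant monomial $|u|^2u$, the algebraic identity $J(|u|^2u)=2|u|^2 Ju - u^2\overline{Ju}$ gives $\|J(|u|^2u)\|_{L^2}\leq 3\|u\|_{L^\infty}^2\|Ju\|_{L^2}$.  Combining \eqref{eq:Xdecay} with the definition of $\|\cdot\|_X$ yields $\|J(|u|^2u)\|_{L^2}\lesssim s^{-3/4}\|u(s)\|_{X(s)}^3$, producing the $\int_1^t s^{-3/4}\|u\|_X^3\,ds$ contribution in \eqref{eq:Ju bootstrap}.

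The remaining three monomials are more delicate. Commuting with $J(s)$ produces an expected $u^2\,J(s)u$-type piece (bounded as above) plus an \textbf{anomalous} piece of the form $xu^3$; for example $J(s)(u^3)=3u^2\,J(s)u-2xu^3$. This cannot be controlled in $L^2$ directly, since $\|xu\|_{L^2}\leq\|Ju\|_{L^2}+s\|\partial_x u\|_{L^2}$ involves $\|\partial_x u\|_{L^2}$, not part of $\|\cdot\|_X$. I would therefore pass to Fourier variables and write the contribution of these monomials as a schematic trilinear oscillatory integral
\[
e^{-it\xi^2/2}\int_1^t\!\!\iint e^{is\Phi(\xi,\eta_1,\eta_2)/2}\,a(s,\xi,\eta_1,\eta_2)\,g(\eta_1)\,g(\eta_2)\,g(\eta_3)\,d\eta_1\,d\eta_2\,ds,\qquad \eta_3=\xi-\eta_1-\eta_2,
\]
with $g=\wh f$ (replaced by $\b g$ with appropriate $\eta$-sign changes for the conjugate factors), a polynomial weight $a$, and the resonance phase $\Phi$ (e.g.\ $\Phi=2(\eta_1\eta_2+\eta_2\eta_3+\eta_1\eta_3)$ for $u^3$). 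Since each such $\Phi$ is non-resonant on a suitable portion of frequency space, I would integrate by parts in $s$ via $e^{is\Phi/2}=\tfrac{2}{i\Phi}\partial_s e^{is\Phi/2}$. The boundary term at $s=t$ yields a cubic expression in $u(t)$ whose $L^2_\xi$ norm is bounded by $t^{1/4}\|u(t)\|_{X(t)}^3$ (the factor $t^{1/4}$ being exactly the normalization of $\|Ju\|_{L^2}$ inside $\|\cdot\|_{X(t)}$); the boundary term at $s=1$ contributes $\lesssim\|u_1\|_\Sigma^3$. When $\partial_s$ falls on a profile amplitude, the equation $\partial_s g=-ie^{is\eta^2/2}\wh N$ inserts another cubic nonlinearity, giving a quintic-in-$u$ remainder that integrates to $\int_1^t s^{-3/4}\|u\|_X^5\,ds$.

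The main obstacle is the singular factor $1/\Phi$ produced by the integration by parts, since the non-gauge-invariant phases vanish on positive-codimension subsets of frequency space.  I would overcome this with a dyadic decomposition in $|\Phi|$ (and in the frequency magnitudes), distributing factors between $L^\infty_\xi$-control of $g=\wh f$ (cheap, one per amplitude) and the $L^2$-control $\|\partial_\xi g\|_{L^2}=\|Ju\|_{L^2}\leq 2s^{1/4}\|u\|_{X(s)}$ (expensive in $s$, used at most once), so that all remaining contributions collapse into the right-hand side of \eqref{eq:Ju bootstrap}.
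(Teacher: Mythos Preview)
Your reduction to the oscillatory integrals is correct and matches the paper's: applying $J(s)$ to the nonlinearity in physical space is exactly the same as applying $\partial_\xi$ to the Duhamel formula in Fourier space, and your ``anomalous'' piece $xu^3$ corresponds precisely to the terms \eqref{eq:Ju1}--\eqref{eq:Ju3} where $\partial_\xi$ lands on the phase, producing the extra factor $is\partial_\xi\Psi$. Your treatment of the gauge-invariant term and of the ``easy'' $u^2 Ju$ pieces is also the same as in the paper.

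The gap is in your handling of the factor $1/\Psi$ for the non-gauge-invariant phases. You propose to integrate by parts \emph{only} in $s$ and then repair the singularity by a dyadic decomposition in $|\Psi|$. This works for the $\bar u^3$ phase $\Phi=\tfrac12(\xi^2+\xi_1^2+\xi_2^2+\xi_3^2)$, which is positive definite, but not for $\Psi=\xi_1\xi_2+\xi_2\xi_3+\xi_3\xi_1$ (from $u^3$) or $\Omega$ (from $|u|^2\bar u$). For these phases the time-resonance set $\{\Psi=0\}$ is a codimension-one surface that meets every frequency annulus $\{|\vec\xi|\sim N\}$: for instance $\xi_1=\xi_2=1$, $\xi_3=-\tfrac12$ gives $\Psi=0$ while $\partial_\xi\Psi=\xi_2+\xi_3=\tfrac12\neq 0$. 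Near such points the symbol $s\partial_\xi\Psi/\Psi$ is genuinely singular, and a dyadic decomposition in $|\Psi|$ does not gain enough smallness to sum; the Coifman--Meyer estimates you need (Lemma~\ref{lem:tri}) require $|\xi_j|\cdot m$ to be a bounded symbol, which fails here.

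What the paper does---and what your proposal is missing---is the \emph{space-time resonance} dichotomy: on the support of $\chi_2$ (say) one further splits into regions $\chi_\eta,\chi_\sigma,\chi_s$ according to which of $|\partial_\eta\Psi|=|\xi_1-\xi_2|$, $|\partial_\sigma\Psi|=|\xi_2-\xi_3|$, or $|\Psi|$ is bounded below by $c|\xi_2|$ (resp.\ $c|\xi_2|^2$). Since the space-time resonant set $\{\Psi=0,\ \nabla_{\eta,\sigma}\Psi=0\}$ reduces to the origin, such a decomposition is always possible. One then integrates by parts in $\eta$ or $\sigma$ on $\chi_\eta,\chi_\sigma$ (gaining $s^{-1}$ and a symbol $\partial_\xi\Psi/\partial_\eta\Psi$ that \emph{is} Coifman--Meyer there), and reserves the $s$-integration by parts for $\chi_s$, where $|\Psi|\gtrsim|\vec\xi|^2$ and $1/\Psi$ is harmless. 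Without this spatial integration by parts, the argument does not close.
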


We prove Propositions~\ref{prop:fhat}~and~\ref{prop:Ju} in Sections~\ref{sec:fhat}--\ref{sec:Ju}
by performing an analysis in Fourier space known as the space-time resonance method \cite{GMS3D, GMS2D}.
More precisely, we begin by looking at the integral equation \eqref{eq:duh} and expressing it in terms of the profile $f = e^{-it\partial_{xx}/2}u$
and in Fourier space as in \eqref{eq:duhamel}--\eqref{Phases1}.
We do not follow this approach for the gauge-invariant term $\vert u\vert^2 u$, since it is amenable to a simpler treatment altogether, which in particular does not necessitate analysis via space-time resonance. 

We then proceed to study the oscillations in the integrals \eqref{eq:duhamel}.
The most delicate interactions arise when there is a lack of oscillation in $(\eta,\sigma,s)$,
that is, when the phases in \eqref{Phases1} vanish together with their gradients in $\eta$ and $\sigma$.
The region of $(\eta,\sigma)$ in $\R^2$ where this vanishing occurs is known as the space-time resonant set.

For the three non-gauge-invariant cubic nonlinearities, the space-time resonant set is the origin.  
To deal with the contribution of this set, our strategy is to introduce a time-dependent cutoff to a neighborhood of the origin where we use volume bounds.
We then decompose the complement of this neighborhood into regions where we can integrate by parts in either space (in $\eta$ or $\sigma$) or time (in $s$), using the identities
\[
e^{isA}=(is\partial_\eta A)^{-1} \partial_\eta e^{isA}, \quad e^{isA}=(is\partial_\sigma A)^{-1} \partial_\sigma e^{isA}, \quad e^{isA}=(iA)^{-1} \partial_s e^{isA},
\]
respectively, where $A$ is one of the phases appearing in \eqref{eq:duhamel}--\eqref{Phases1}.  This procedure yields additional decay either by introducing the factor $s^{-1}$ or by introducing more copies of the solution (cf. \eqref{eq:Xdecay} and the fact that $\partial_s f = e^{-is\partial_{xx}/2}(\partial_s +\tfrac{i}{2}\partial_{xx})u$ is a cubic expression in $u$). 

Thanks to our decompositions of the frequency space, the multipliers of the form $(\partial_\eta A)^{-1}$ or $A^{-1}$ that appear 
after the integration by parts can be viewed as (powers of) antiderivatives acting on the highest frequency terms, up to multiplication by Coifman--Meyer multipliers.  We point out that that the contribution of the term $\bar{u}^3$ is the easiest to
estimate, since away from the origin we have complete temporal non-resonance (the phase $\Phi$ in \eqref{Phases1} is bounded below).  For $u^3$ and $\vert u\vert^2\bar{u}$, we need to decompose the frequency space more carefully.
The use of the Coifman--Meyer Theorem (see Lemma \ref{thm:cm} below) is crucial for our arguments, since it gives the sharp H\"older-type estimates that allow us to prove optimal lifespan bounds.

Note that from the perspective of space-time resonance, the presence of derivatives in the nonlinearity 
actually offers some improvement compared to the nonlinearities we consider in \eqref{nls}.
Indeed, derivatives act as multiplication by the frequency on the Fourier side and hence provide some cancellation at zero frequency, that is, on the space-time resonant set. In particular, this can be thought of as a type of null condition (see \cite{PS}, for example).
We refer the reader especially to \cite{GMS2D}, which employs the space-time resonance method to prove global existence and scattering for a non-gauge-invariant quadratic NLS in two space dimensions, with a nonlinearity containing a derivative at low frequencies.

Assuming Propositions~\ref{prop:fhat}~and~\ref{prop:Ju} for now, we prove Theorem~\ref{thm:main}.
			
\begin{proof}[Proof of Theorem~\ref{thm:main}]
Let $0<\eps<1$ to be specified below and let $\|u_1\|_{\Sigma} = \eps.$  If $u$ solves \eqref{nls}, then Proposition~\ref{prop:fhat}, Proposition~\ref{prop:Ju}, and Lemma~\ref{lem:decay} imply
		\begin{align}		
		\|u(t)\|_{X(t)}
		& \leq \eps + C\Big[ 2\|u_1\|_{\Sigma}^3 + \|u(t)\|_{X(t)}^3
		+ \! \int_1^t (s^{-1}\!+t^{-\frac14}s^{-\frac34})
		\big( \|u(s)\|_{X(s)}^3+\|u(s)\|_{X(s)}^5 \big) \,ds \Big]
		\label{eq:proof1}
		\end{align}	
for some absolute constant $C>0$.  We choose $\eps=\eps(C)>0$ and define $T_\eps$ so that
		\begin{align}
		\label{eq:proof2}	
		170C\eps^2 < \tfrac{1}{2}, \qquad T_\eps:=\exp\big(\tfrac{1}{80C\eps^2}\big).
		\end{align}
We now claim that the following estimate holds:
		\begin{equation}
		\|u(t)\|_{X(t)}\leq 2\eps \quad\text{for all}\quad t\in[1,T_\eps].
		\label{eq:proof4}	
		\end{equation}
This holds at $t=1$ by \eqref{eq:Xdata}.  By continuity, if it is not true for all $t\in[1,T_\eps]$ there must be a first time $t\in(1,T_\eps]$ such that $\|u(t)\|_{X(t)}=2\eps$. 
Applying \eqref{eq:proof1} at this time and using \eqref{eq:proof2} yields
		\begin{align*}
		2\eps
		&\leq 
		\eps + C\big(2\eps^3 + (2\eps)^3 + [4+\log t] [(2\eps)^3+(2\eps)^5] \big)
		\\
		&\leq
		\eps \big(1 + 10C\eps^2 + C[4 + \log T_\eps] [ 40 \eps^2] \big)
		< 2\eps,
		\end{align*}
which is a contradiction. This proves \eqref{eq:proof4}.

To complete the proof, it suffices to show that if $u:[1,T]\times\R\to\C$ is a solution such that $T\leq \exp\big(\frac{1}{c\eps^2}\big)$
and $\sup_{t\in[1,T]}\|u(t)\|_{X(t)}\lesssim \eps,$ then we may continue the solution in time.
By the local theory it suffices to prove that $\|u(T)\|_{L_x^2}\lesssim \|u_1\|_{L_x^2}.$
We use the Duhamel formula \eqref{eq:duh}, Lemma~\ref{lem:decay}, and the bound on $u$ to estimate
		\begin{align*}
		\|u(T)\|_{L_x^2}
		&\lesssim \|u_1\|_{L_x^2}+\!\int_1^T
		\|u(s)\|_{L_x^\infty}^2\|u(s)\|_{L_x^2}\,ds
		\lesssim \|u_1\|_{L_x^2}+\eps^2\!\int_1^T s^{-1}
		\|u(s)\|_{L_x^2}\,ds.
		\end{align*}
Thus by Gronwall's inequality and the bound on $T$, we deduce $\|u(T)\|_{L_x^2}\lesssim T^{C\eps^2}\|u_1\|_{L_x^2} \lesssim \|u_1\|_{L_x^2},$ as was needed to show. This completes the proof of Theorem~\ref{thm:main}. \end{proof}

The rest of the paper is organized as follows: In Section~\ref{sec:basic} we set up notation and collect some useful lemmas.  The main trilinear estimates that we will use repeatedly in the proofs of Proposition~\ref{prop:fhat}~and~\ref{prop:Ju} are given in Lemma \ref{lem:tri}.
In Section~\ref{sec:fhat} we prove Proposition~\ref{prop:fhat}, and in Section~\ref{sec:Ju} we prove Proposition~\ref{prop:Ju}.  As shown above, these two propositions imply the main result, Theorem~\ref{thm:main}.  Section~\ref{sec:grow} contains the proof of Theorem~\ref{thm:grow}, in which we demonstrate norm growth for a model nonlinearity.  In Appendix~\ref{section:appendix} we discuss the construction of some cutoffs used in Sections~\ref{sec:fhat}~and~\ref{sec:Ju}.

		\subsection*{Acknowledgements}

J.~M. was supported by the NSF Postdoctoral Fellowship DMS-1400706.
F.~P. was supported in part by NSF grant DMS-1265875.


		\section{Notation and useful lemmas}
		\label{sec:basic}

For nonnegative $X,Y$ we write $X\lesssim Y$ to denote $X\leq CY$ for some $C>0$. We write $X\ll Y$ to denote $X\leq cY$ for some small $c \in (0,1)$.
We write $\text{\O}(X)$ to denote a finite linear combination of terms that resemble $X$ up to constants, complex conjugation,
and Littlewood--Paley projections. For example, the nonlinearity in \eqref{nls} is $\text{\O}(u^3)$.
	
The Fourier transform and its inverse are given by
		$$
		\F u(\xi)=\wh{u}(\xi)=
		(2\pi)^{-\frac12}\int_\R e^{-ix\xi}u(x)\,dx,
		\quad
		\F^{-1}u(x)=
		(2\pi)^{-\frac12}\int_\R e^{ix\xi} u(\xi)\,d\xi.
		$$
		
	For $s\in\R$ we define the fractional derivative operator $\vert\partial_x\vert^s$ as a Fourier multiplier, namely, $\vert\partial_x\vert^s=\F^{-1}\vert\xi\vert^s\F.$ We define the homogeneous Sobolev space $\dot{H}_x^s$ via
		$$\|u\|_{\dot{H}_x^s}=\|\,\vert\partial_x\vert^s u\|_{L_x^2}.$$
	
We employ the standard Littlewood--Paley theory. Let $\phi:\R\to\R$ be an even bump supported on $[-\frac{10}{9},\frac{10}{9}]$ and equal to one on $[-1,1]$.
For $N \in 2^{\mathbb{Z}}$ we define
		\begin{align*}
		&\wh{P_{\leq N}f}(\xi)=\wh{f_{\leq N}}(\xi) :=\phi(\xi/N)\wh{f}(\xi),
		\quad \wh{P_{>N} f}(\xi)=\wh{f_{>N}}(\xi) :=[1-\phi(\xi/N)]\wh{f}(\xi),
		\\ &\wh{P_N f}(\xi)=\wh{f_N}(\xi)
		:=[\phi(\xi/N)-\phi(2\xi/N)]\wh{f}(\xi).
		\end{align*}
These operators commute with all other Fourier multiplier operators.
They are self-adjoint and bounded on every $L_x^p$-space and obey the estimate
\begin{equation}
\label{lem:bernstein}
\|f_{>N}\|_{L_x^q(\R)}  \lesssim N^{-s+\frac{1}{r}-\frac{1}{q}}\|\,\vert\partial_x\vert^s f_{>N}\|_{L_x^r(\R)}
\end{equation}
for $1\leq r\leq q\leq\infty$ and $s>\tfrac{1}{r}-\tfrac{1}{q}$.


		

\subsection{Linear theory} \label{sec:linear}
		
The free Schr\"odinger propagator is defined as a Fourier multiplier: $e^{it\partial_{xx}/2}=\F^{-1} e^{-it\xi^2/2}\F.$
In physical space we have	
		\begin{equation}
		\label{eq:propagator}   
		[e^{it\partial_{xx}/2}f](x)=
		(2\pi i t)^{-\frac12}\int_\R
		e^{i(x-y)^2/2t}f(y)\,dy.
		\end{equation}
From \eqref{eq:propagator} we can read off the following factorization:
		\begin{equation}
		\label{eq:factorization}
		e^{it\partial_{xx}/2}=
		\M(t)\D(t)\F\M(t),
		\end{equation}
where the modulation $M(t)$ and dilation $\D(t)$ are defined by
		$$
		[M(t)f](x)=e^{ix^2/2t}f(x)
		\quad
		\text{and}
		\quad
		[\D(t)f](x)=(it)^{-\frac12}f(\tfrac{x}{t}).
		$$		

We define the operator $J(t)=x+it\partial_x$. By \eqref{eq:factorization}, we have $J(t)=e^{it\partial_{xx}/2}xe^{-it\partial_{xx}/2}.$
For a solution $u(t)$ to \eqref{nls} we write $f(t)=e^{-it\partial_{xx}/2}u(t)$ and note that
		\begin{equation}
		\label{eq:forms of Ju} 	
		\|Ju\|_{L_x^2}
		= \|xf\|_{L_x^2}
		= \|\partial_\xi \wh{f}\|_{L_\xi^2}.
		\end{equation}
		

\subsection{Notation and Duhamel formula}\label{sec:notation}
Suppose that $u$ is a solution to \eqref{nls} and denote $f(t)=e^{-it\partial_{xx}/2}u(t)$.
Using the Duhamel formula \eqref{eq:duh} and taking the Fourier transform leads to:
\begin{align}
\label{eq:duhamel} 
\begin{split}
	\wh{f}(t,\xi)
	&=
	\wh{f}(1,\xi)
	-i\lambda_1(2\pi)^{-1} \int_1^t \iint_{\R^2} e^{is\Phi(\xi,\eta,\sigma)}
	\who{f}(\xi-\eta)\who{f}(\eta-\sigma)\who{f}(\sigma)\ddd
	\\ &\quad
	-i\lambda_2(2\pi)^{-1} \int_1^t\iint_{\R^2} e^{is\Psi(\xi,\eta,\sigma)}
	\wh{f}(\xi-\eta)\wh{f}(\eta-\sigma)\wh{f}(\sigma)\ddd
	\\ &\quad-i\lambda_3(2\pi)^{-1} \int_1^t\iint_{\R^2} e^{is\Omega(\xi,\eta,\sigma)}
		\who{f}(\xi-\eta)\who{f}(\eta-\sigma)\wh{f}(\sigma)\ddd
	\\ &\quad -i\lambda_4 \int_1^t\F\big(e^{-is\partial_{xx}/2}\vert u\vert^2u\big)(s,\xi)\,ds,
\end{split}
\end{align}
where the phases $\Phi,$ $\Psi$, and $\Omega$ are given by
	\begin{align}
	 \begin{split}
	  \label{Phases1}
	&\Phi=\tfrac12[\xi^2+(\xi-\eta)^2
		+(\eta-\sigma)^2+\sigma^2],
	\\ &\Psi=\tfrac12[\xi^2-(\xi-\eta)^2
		-(\eta-\sigma)^2-\sigma^2],
	\\ &\Omega=\tfrac12[\xi^2+(\xi-\eta)^2
		+(\eta-\sigma)^2-\sigma^2].
	 \end{split}
	\end{align}
We do not write out the phase for the gauge-invariant nonlinearity $\vert u\vert^2 u$, since this term is amenable to a simpler analysis.

	It is convenient to introduce the notation
\begin{align}
\label{notfreq}
\xi_1=\xi-\eta,\quad
		\xi_2=\eta-\sigma,\quad
		\xi_3=\sigma,\quad
		\vec{\xi}=(\xi_1,\xi_2,\xi_3),\quad
		\xi=\xi_1+\xi_2+\xi_3.
\end{align}
In this notation we may rewrite the phases as follows:
		\begin{align}
		 \begin{split}
		  \label{Phases2}
		&\Phi=\tfrac12[\xi^2+\xi_1^2+\xi_2^2+\xi_3^2],
		\quad \, \Psi =\xi_1\xi_2+\xi_2\xi_3+\xi_3\xi_1,
		\quad \, \Omega= \xi_1^2+\xi_2^2+\xi_1\xi_2+\xi_2\xi_3+\xi_1\xi_3.
		 \end{split}
		\end{align}
We also need to consider derivatives of the phases, which we record here:
	\begin{align}
	\begin{split}
	\label{Phases3}	
	&\partial_\xi\Phi=\xi+\xi_1,\quad
	\partial_\xi\Psi=\xi_2+\xi_3,\quad
	\partial_\xi\Omega=\xi+\xi_1,
	\\ &(\partial_\eta\Psi,\partial_\sigma\Psi)
	=(\xi_1-\xi_2,\xi_2-\xi_3),\quad
	(\partial_\eta\Omega,\partial_\sigma\Omega)=
	(\xi_2-\xi_1,-\xi_2-\xi_3).
	\end{split}
	\end{align}

Finally, we set up notation concerning frequency cutoffs.  For a function $f=f(s,x)$ and $s\geq 1$, we define $f_{lo}$ and $f_{hi}$ via $\wh{f_{lo}}=\wh{P_{lo}f}:=\phi_{lo}\wh{f}$ and $\wh{f_{hi}}=\wh{P_{hi}f}=:\phi_{hi}\wh{f}$, where
		\begin{equation}
		\label{eq:cutoff}	
		\phi_{lo}(\xi) :=\phi(s^{\frac12}\xi),
		\quad\phi_{hi}(\xi) :=1-\phi(s^{\frac12}\xi).
		\end{equation}
Here $\phi$ is the standard cutoff defined earlier at the beginning of this section. 
We use the notation $\phi_*$ to denote that either $\phi_{lo}$ or $\phi_{hi}$ may appear, and $f_*=P_*f$ to denote that either ${f_{lo}}$ or ${f_{hi}}$ may appear.

		\subsection{Proof of Lemma~\ref{lem:decay}}
		\label{sec:decay}
In this section we prove \eqref{eq:Xdata} and \eqref{eq:Xdecay}. First,
	\begin{align*}
	\|Ju_1\|_{L_x^2}&\leq \|xu_1\|_{L_x^2}+\|\partial_x u_1\|_{L_x^2},
	\\ \|\wh{u_1}\|_{L_x^\infty}&
	\leq (2\pi)^{-\frac12} \|u_1\|_{L_x^1}\leq \pi^{-\frac12}	\|(1+\vert x\vert)u_1\|_{L_x^2},
	\end{align*}
so that \eqref{eq:Xdata} holds. For \eqref{eq:Xdecay} we first use \eqref{eq:factorization} to write
		$$
		u(t)=\M(t)\D(t)\wh{f}(t)
			+\M(t)\D(t)\F[M(t)-1]f(t).
		$$
It therefore suffices to show
		$$
		\|\F[M(t)-1]f(t)\|_{L_x^\infty}
		\lesssim t^{-\frac14}\|Ju(t)\|_{L_x^2}.
		$$

	We split at frequency $\sqrt{t}$, using  the operators
		$
		\bar{P}_{\leq N}:=
		\F\phi(\cdot/N)\F^{-1}.
		$
These share the same estimates as the usual projections, as
		$
		\bar{P}_{\leq N}f=\overline{P_{\leq N}\bar{f}}.
		$

	We first use \eqref{lem:bernstein}, Plancherel, and \eqref{eq:forms of Ju} to estimate
		\begin{align*}
		\|\bar{P}_{>\sqrt{t}}\F[M(t)-1]f(t)\|_{L_x^\infty}
		 &\lesssim t^{-\frac14}\|\partial_x\big[
			\F[M(t)-1]f(t)\big]\|_{L_x^2}
		\\ &\lesssim t^{-\frac14}\|xf(t)\|_{L_x^2}
		\lesssim t^{-\frac14}\|Ju(t)\|_{L_x^2}.
		\end{align*}

	Second, we use Hausdorff--Young, Cauchy--Schwarz, \eqref{eq:forms of Ju}, and the bound
		\begin{equation}
		\label{eq:pw}
		\vert \M(t)-1\vert
		=\vert e^{ix^2/2t}-1\vert
		\lesssim t^{-\frac12}\vert x\vert
		\end{equation}
to estimate
		\begin{align*}
		\|\bar{P}_{\leq\sqrt{t}}
		\F[M(t)-1]f(t)\|_{L_x^\infty}&
		\lesssim \|\phi(\tfrac{\cdot}{\sqrt{t}})[M(t)-1]f(t)\|_{L_x^1}
		\lesssim t^{\frac14}\|[M(t)-1]f(t)\|_{L_x^2}
		\lesssim t^{-\frac14}\|xf(t)\|_{L_x^2}. 
		\end{align*}

		\subsection{Useful estimates}
		\label{sec:frequent}

For a function $m:\R^3\to\R$ we define the trilinear operator $T_m$ as follows: 	
		\begin{equation}
		\label{def:trilinear}  
		\F\big(T_m[a,b,c]\big)(\xi)=
		\iint_{\R^2} m(\xi-\eta,\eta-\sigma,\sigma)
		\wh{a}(\xi-\eta)
		\wh{b}(\eta-\sigma)
		\wh{c}(\sigma)\,d\sigma\,d\eta.
		\end{equation}
If $a,$ $b,$ $c$ are functions of space-time, we employ the following notation:
\[
\F\big(T_m[a(t),b(t),c(t)]\big)(\xi)=\F\big(T_m[a,b,c]\big)(t,\xi).
\]	
We also make use of the notation introduced in \eqref{notfreq}.

The following multilinear estimate due to Coifman--Meyer is one of the primary technical tools used in this paper.  For the original result, see \cite[Chapter~13]{CoifmanMeyer}; for a more modern treatment, see \cite{Muscalu}.
		\begin{lemma}
		[Coifman--Meyer estimate
		\cite{CoifmanMeyer}]
		\label{thm:cm}		
Let $m \in C^{\infty}(\R^3\backslash\{0\}; \R)$ be a symbol satisfying
	\begin{align}
	 \label{CM}
		\sup_{\vec{\xi}\in\R^3\backslash\{0\}}
		\big| \vec{\xi} \big|^{\vert\alpha\vert} \big| \partial_\xi^\alpha m(\vec{\xi}) \big| < \infty
	\end{align}
for all multiindices $\alpha$ with $|\alpha| \leq 10$. Then
		$$\|T_m[a,b,c]\|_{L_x^r}\lesssim
		 \|a\|_{L_x^{r_1}}
		 \|b\|_{L_x^{r_2}}
		 \|c\|_{L_x^{r_3}}$$
for all $1<r_1,r_2,r_3\leq\infty$ and $1\leq r<\infty$ such that $\tfrac{1}{r}=\tfrac{1}{r_1}+\tfrac{1}{r_2}+\tfrac{1}{r_3}.$
		\end{lemma}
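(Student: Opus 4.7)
The plan is to follow the classical proof of the Coifman--Meyer theorem via Littlewood--Paley decomposition of the symbol. First, I would decompose $m$ dyadically, writing $m = \sum_{\vec{N} \in (2^{\mathbb{Z}})^3} m_{\vec{N}}$, where $m_{\vec{N}}(\vec{\xi})$ is supported on the region $|\xi_j| \sim N_j$ for $j=1,2,3$. By the Mikhlin-type condition \eqref{CM}, each piece satisfies $|\partial_{\vec{\xi}}^\alpha m_{\vec{N}}| \lesssim N_1^{-\alpha_1} N_2^{-\alpha_2} N_3^{-\alpha_3}$ for all $|\alpha| \leq 10$. Expanding each localized piece in a triple Fourier series on a box of sidelengths $N_1, N_2, N_3$ yields a representation of $T_{m_{\vec{N}}}$ as a sum, indexed by $\vec{k} \in \mathbb{Z}^3$, of tensor products of translated Littlewood--Paley projections $P_{N_1}^{k_1}, P_{N_2}^{k_2}, P_{N_3}^{k_3}$, with Fourier coefficients $c_{\vec{k}}^{\vec{N}}$ decaying like $\prod_j (1+|k_j|)^{-10}$ (from integrating by parts against the symbol bounds).

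Next, I would bound $\|T_{m_{\vec{N}}}[a,b,c]\|_{L^r}$ at each scale by applying H\"older's inequality to the product of the three Littlewood--Paley pieces, and then sum over $\vec{N}$. By symmetry, assume $N_1 \geq N_2 \geq N_3$. In the \emph{paraproduct case} $N_1 \gg N_2$, the output of the trilinear form is frequency-localized at scale $\sim N_1$, so I would insert a fattened projection $\widetilde{P}_{N_1}$ around the whole expression and use the Littlewood--Paley square function characterization of $L^{r_1}$ in the $a$-variable, together with a pointwise bound for $P_{N_2} b$ and $P_{N_3} c$ by the Hardy--Littlewood maximal functions of $b$ and $c$, respectively. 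This controls the sum in $N_1$. In the \emph{diagonal case} $N_1 \sim N_2$, Cauchy--Schwarz applied to the two square functions together with a maximal function bound on the third piece yields the estimate.

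The main obstacle is the endpoint $r_j = \infty$: the Littlewood--Paley square function fails to characterize $L^\infty$, so I would replace it by the Fefferman--Stein vector-valued maximal inequality on whichever factors sit in finite $L^{r_j}$, and bound the $L^\infty$ factor by a supremum over the maximal function, absorbing the rapid $\vec{k}$-decay of the Fourier coefficients $c_{\vec{k}}^{\vec{N}}$ to ensure all sums converge absolutely. This is the delicate spot where the hypothesis $r < \infty$ genuinely enters the argument. With these ingredients assembled, summing the dyadic contributions over $\vec{N}$ and the translation parameters $\vec{k}$ produces the claimed H\"older-type trilinear estimate.
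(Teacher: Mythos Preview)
The paper does not prove this lemma; it is quoted as a black box from the literature (the references to Coifman--Meyer and to the short note of Muscalu--Pipher--Tao--Thiele). So there is no ``paper's own proof'' to compare against. Your outline is essentially the standard argument found in those references: radial/dyadic decomposition of the symbol, Fourier series expansion of each localized piece, reduction to paraproduct-type sums of (modulated) Littlewood--Paley projections, and then square-function and vector-valued maximal-function estimates to close H\"older.

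One small correction: the hypothesis gives only $|\alpha|\le 10$ derivatives \emph{in total}, and the Mikhlin bound is $|\partial^\alpha m|\lesssim |\vec{\xi}|^{-|\alpha|}\sim (\max_j N_j)^{-|\alpha|}$, not $\prod_j N_j^{-\alpha_j}$. Consequently the Fourier coefficients of $m_{\vec{N}}$ on an $N_1\times N_2\times N_3$ box do not decay like $\prod_j(1+|k_j|)^{-10}$; distributing, say, three derivatives to each variable gives instead $\prod_j(1+|k_j|)^{-3}$ times a favorable factor $\prod_j (N_j/\max_\ell N_\ell)^{3}$. This is still absolutely summable in $\vec{k}$, so the argument is unaffected---just be careful not to overclaim the decay rate.
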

\begin{remark} Symbols with the property \eqref{CM} will be called {\it Coifman--Meyer} symbols.\end{remark}

We will now establish some trilinear estimates that will be used frequently in Sections~\ref{sec:fhat}~and~\ref{sec:Ju}.  The proofs rely on Lemma~\ref{thm:cm}, together with the following estimate, which is a consequence of Plancherel and H\"older:
\begin{equation}
\label{est0}
N^{\frac12} \|u_{>N}\|_{\dot{H}_x^{-1}} + N^{\frac32} \|u_{>N}\|_{\dot{H}_x^{-2}} + N^{-\frac12}\|u_{\leq N}\|_{L_x^2} \lesssim \|\wh{u}\|_{L_\xi^\infty}.
\end{equation}

\begin{lemma}[Trilinear estimates]\label{lem:tri}
Let $T_m$ be an operator of the form \eqref{def:trilinear}.
\setlength{\leftmargini}{1.8em}
\begin{itemize}
\item[(i)] If $|\xi_3|^2 m$ is a Coifman--Meyer symbol supported where $|\xi_3| \gtrsim \max\{|\xi_2|,|\xi_1|\}$, then
\begin{align}
\label{TriEst1}
{\| \F\bigl({T_m}[a,b,c_{>N}]\bigr) \|}_{L^\infty_\xi} \lesssim N^{-1}  {\|\wh{a}\|}_{L^\infty_\xi} \min\big( {\|b\|}_{L_x^\infty} {\|\wh{c}\|}_{L^\infty_\xi},  {\|\wh{b}\|}_{L^\infty_\xi} {\|c\|}_{L_x^\infty} \big).
\end{align}

\item[(ii)] If $|\xi_3|m$ is a Coifman--Meyer symbol supported where $|\xi_3| \gtrsim \max\{|\xi_2|,|\xi_1|\}$, then
\begin{align}
\label{TriEst2}
{\| \F\bigl({T_m}[a,b,c_{>N}]\bigr) \|}_{L^\infty_\xi} \lesssim N^{-\frac12}  {\|a\|}_{L^\infty_x} \min\big( {\|b\|}_{L_x^2} {\|\wh{c}\|}_{L^\infty_\xi},
  {\|\wh{b}\|}_{L^\infty_\xi} {\|c\|}_{L_x^2} \big).
\end{align}
\item[(iii)] If $|\xi_3|m$ is a Coifman--Meyer symbol supported where $|\xi_3| \gtrsim \max\{|\xi_2|,|\xi_1|\}$, then
\begin{align}
\label{TriEst3}
{\| T_m[a,b,c_{>N}] \|}_{L_x^2} \lesssim N^{-\frac12}  {\|a\|}_{L^\infty_x} \min\big( {\|b\|}_{L_x^\infty} {\|\wh{c}\|}_{L^\infty_\xi},
  {\|\wh{b}\|}_{L^\infty_\xi} {\|c\|}_{L_x^\infty} \big).
\end{align}
\end{itemize}

In all of the above estimates, we can exchange the role of $a$ and $b$ on the right-hand side.
\end{lemma}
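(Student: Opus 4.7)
My approach is to factor the symbol as $m = \tilde m / |\xi_3|^k$, where $k=2$ in part (i) and $k=1$ in parts (ii)--(iii). The hypothesis on $|\xi_3|^k m$ ensures $\tilde m$ is a Coifman--Meyer symbol on the support of $m$, so that
\[
T_m[a,b,c_{>N}] = T_{\tilde m}\bigl[a,\, b,\, |\partial_x|^{-k} c_{>N}\bigr].
\]
I would then combine Lemma~\ref{thm:cm} applied to $T_{\tilde m}$ with the bounds $\|c_{>N}\|_{\dot H^{-1}}\lesssim N^{-1/2}\|\wh c\|_{L^\infty}$ and $\|c_{>N}\|_{\dot H^{-2}}\lesssim N^{-3/2}\|\wh c\|_{L^\infty}$ from \eqref{est0} to extract the $N^{-k/2}$ factor. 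In (i) and (ii), the $L^\infty_\xi$ output is reduced to $L^1_x$ via Hausdorff--Young, $\|\F g\|_{L^\infty_\xi}\lesssim \|g\|_{L^1_x}$.

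For the first option of each $\min$ (those containing $\|\wh c\|_{L^\infty}$), I would apply Lemma~\ref{thm:cm} with H\"older triples $(\infty,\infty,2)$ for (iii), $(\infty,2,2)$ for (ii), and $(2,\infty,2)$ for (i). In (i), the triple places $a$ in $L^2$, so I additionally decompose $a = \sum_L a_L$ and use $\|a_L\|_{L^2}\lesssim L^{1/2}\|\wh a\|_{L^\infty}$; combined with $c_{>N}=\sum_{K>N/2}c_K$ and the support constraint $L\lesssim K$, the $L$-sum is geometric and the $K$-sum is handled by triangle inequality in $L^1_x$. For the second option (those containing $\|\wh b\|_{L^\infty}$), one dyadically decomposes $b = \sum_M b_M$ and uses $\|b_M\|_{L^p}\lesssim M^{1-1/p}\|\wh b\|_{L^\infty}$ (Hausdorff--Young on the $\xi_2$-support of $\wh{b_M}$), again with $M\lesssim K$ controlling the $M$-sum. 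For (iii), a useful additional ingredient is the kernel estimate $\|\,|\partial_x|^{-1}c_{>N}\|_{L^\infty_x}\lesssim N^{-1}\|c\|_{L^\infty_x}$, obtained by scaling and integration by parts on the kernel $\F^{-1}\bigl(|\xi|^{-1}(1-\phi(\xi/N))\bigr)$.

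The main obstacle I anticipate is (iii) with the second option of the $\min$: per dyadic $K$, the argument yields only $\|T_m[a,b,c_K]\|_{L^2_x}\lesssim K^{-1/2}\|a\|_{L^\infty}\|\wh b\|_{L^\infty}\|c\|_{L^\infty}$, but the naive triangle sum $\sum_K K^{-1/2}$ diverges. The fix is an almost-orthogonality argument based on the fact that $T_m[a,b,c_K]$ is Fourier-supported in $|\xi|\lesssim K$: after a Littlewood--Paley decomposition of the output, the piece $P_{\gtrsim K/C}T_m[a,b,c_K]$ has support essentially in an annulus $|\xi|\sim K$, nearly disjoint across distinct dyadic $K$, so a Plancherel-based square sum converges to $N^{-1/2}$; the low-output-frequency contribution ($|\xi|\ll K$) is handled by noting that in that regime the relation $\xi=\xi_1+\xi_2+\xi_3$ together with $|\xi_3|\sim K$ forces either $|\xi_1|\sim K$ or $|\xi_2|\sim K$, providing the further dyadic restriction on $a$ or $b$ needed to close the sum.
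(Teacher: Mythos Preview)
Your approach and the paper's share the same starting point --- factor $m=\tilde m/|\xi_3|^k$ and combine Lemma~\ref{thm:cm} with \eqref{est0} --- but diverge in how the ``second option'' of each $\min$ is handled. You always place the antiderivative on $c$ and then run a full dyadic decomposition in $K$ (and $L$ or $M$), relying on the support constraint $L,M\lesssim K$ to sum. The paper instead exploits a consequence of the support hypothesis that you do not use: since $|\xi_1|,|\xi_2|\lesssim|\xi_3|$ on the support of $m$, the symbols $|\xi_1\xi_3|m$, $|\xi_2|m$, $\xi_1^2 m$, $\xi_2^2 m$, etc.\ are \emph{also} Coifman--Meyer. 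This allows the antiderivative to be shifted onto whichever input is at high frequency. So for each second option the paper performs a single high--low split at level $N$ on the relevant input (e.g.\ $b=b_{\le N}+b_{>N}$): for $b_{\le N}$ one uses $\|b_{\le N}\|_{L^2}\lesssim N^{1/2}\|\wh b\|_{L^\infty}$ together with $|\partial_x|^{-1}c_{>N}$, while for $b_{>N}$ one writes $T_m=T_{|\xi_2|m}[\,\cdot\,,|\partial_x|^{-1}\cdot\,,\cdot\,]$ and uses $\|b_{>N}\|_{\dot H^{-1}}\lesssim N^{-1/2}\|\wh b\|_{L^\infty}$. No dyadic sums at all.

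Your dyadic route does close for (i), (ii), and the first option of (iii), but the fix you propose for (iii) second option is incomplete. The high-output piece $P_{\gtrsim K/C}T_m[a,b,c_K]$ is indeed almost-orthogonal in $L^2$ across $K$, and that part sums. The low-output piece, however, does not: when $|\xi|\ll K$ and (say) $|\xi_2|\sim K$, the per-$K$ bound is still $K^{-1/2}\|a\|_{L^\infty}\|\wh b\|_{L^\infty}\|c\|_{L^\infty}$, and all these contributions are Fourier-supported near the origin, so no orthogonality in the output is available. The ``further dyadic restriction on $a$ or $b$'' you invoke does not by itself turn an $\ell^1$-sum in $K$ into an $\ell^2$-sum --- you would need a genuine square-function or vector-valued argument (the diagonal sum $\sum_K T[\,\cdot\,,b_{\sim K},c_K]$ is a Bony-remainder-type object), which is considerably more work than the paper's two-term split. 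Adopting the trick of moving the antiderivative between inputs removes the obstacle entirely.
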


As explained above, we obtain \emph{a priori} bounds on solutions to \eqref{eq:duhamel} by analyzing the cubic terms in Fourier space.  This leads us to study trilinear expressions of the form \eqref{def:trilinear} with symbols that have some degeneracies for small frequencies.  By properly dividing frequency space, we will be able to show that these singularities are always of the form $\max(|\xi_1|,|\xi_2|,|\xi_3|)^{-1}$ or $\max(|\xi_1|,|\xi_2|,|\xi_3|)^{-2}$.  We will therefore be able to use the bounds \eqref{TriEst1}--\eqref{TriEst3} to control these expressions; in particular, we will make the choice $N\sim s^{-1/2}$, where $s$ is the time variable.

\begin{proof}[Proof of Lemma~\ref{lem:tri}]  The estimates \eqref{TriEst1}--\eqref{TriEst3} will all follow from Lemma~\ref{thm:cm} and \eqref{est0}, together with suitable high-low decompositions.

{\it Proof of \eqref{TriEst1}.}  Suppose $|\xi_3|^2 m$ is a Coifman--Meyer symbol supported in a region where $|\xi_3| \gtrsim \max\{|\xi_2|,|\xi_1|\}$.

We decompose $a=a_{\leq N}+a_{>N}$ and first estimate
\begin{align*}
{\| \F\bigl(T_m[a_{\leq N},b,c_{>N}]\bigr) \|}_{L^\infty_\xi} & \lesssim  {\|\F\bigl(T_{\xi_3^2m}[a_{\leq N},b, |\partial_x|^{-2} c_{>N}]\bigr) \|}_{L^\infty_\xi}
\\ &\lesssim {\|a_{\leq N} \|}_{L_x^2} {\| b \|}_{L_x^\infty} {\| c_{>N} \|}_{\dot H_x^{-2}}
\lesssim N^{-1}  {\|\wh{a}\|}_{L^\infty_\xi} {\|b\|}_{L_x^\infty} {\|\wh{c}\|}_{L_\xi^\infty}.
\end{align*}
Next, note that under our assumptions, $|\xi_1\xi_3|m$ is also a Coifman--Meyer multiplier. Thus,
\begin{align*}
{\| \F\bigl(T_m[a_{> N},b,c_{>N}]\bigr) \|}_{L^\infty_\xi} & \lesssim {\| \F\bigl(T_{|\xi_1\xi_3|m}[\an a_{> N}, b,\an c_{>N}]\bigr) \|}_{L^\infty_\xi}
\\
& \lesssim {\|a_{> N} \|}_{\dot H_x^{-1}} {\| b \|}_{L_x^\infty} {\|c_{>N} \|}_{\dot H_x^{-1}}
\lesssim N^{-1} {\|\wh{a} \|}_{L_\xi^\infty} {\|b\|}_{L_x^\infty}{\| \wh{c} \|}_{L_\xi^\infty}.
\end{align*}
Combining the two estimates above yields the first estimate in \eqref{TriEst1}.

We turn to the second inequality in \eqref{TriEst1}.  We decompose both $a=a_{\leq N}+a_{>N}$ and $b=b_{\leq N}+b_{>N}$.  Using \eqref{lem:bernstein} as well, we first have
\begin{align*}
{\| \F\bigl(T_m[a_{\leq N},b_{\leq N},c_{>N}]\bigr) \|}_{L^\infty_\xi} &\lesssim {\| \F\bigl(T_{\xi_3^2m}[a_{\leq N},b_{\leq N}, |\partial_x|^{-2} c_{>N}]\bigr) \|}_{L^\infty_\xi} \\
&\lesssim {\|a_{\leq N} \|}_{L_x^2} {\| b_{\leq N}\|}_{L_x^2} {\| \aan c_{>N} \|}_{L_x^\infty}
\lesssim N^{-1}  {\|\wh{a}\|}_{L^\infty_\xi} {\|\wh{b}\|}_{L^\infty_\xi} {\|c\|}_{L_x^\infty}.
\end{align*}
Next, note that under our assumptions, $\xi_2^2 m$ is also Coifman--Meyer.  Thus,
\begin{align*}
{\| \F\bigl(T_m[a_{\leq N},b_{> N},c_{>N}]\bigr) \|}_{L^\infty_\xi} &\lesssim {\| \F\bigl(T_{\xi_2^2m}[a_{\leq N}, |\partial_x|^{-2} b_{>N},c_{>N}]\bigr) \|}_{L^\infty_\xi}
\\ &\lesssim {\|a_{\leq N} \|}_{L_x^2} {\|b_{>N} \|}_{\dot H_x^{-2}} {\| c \|}_{L_x^\infty}
\lesssim N^{-1} {\|\wh{a} \|}_{L_\xi^\infty} {\|\wh{b} \|}_{L_\xi^\infty} {\| c \|}_{L_x^\infty}.
\end{align*}
Noting that $\xi_1^2m$ is also Coifman--Meyer, we can similarly obtain
\[
{\| \F\bigl(T_m[a_{> N},b_{\leq N},c_{>N}]\bigr) \|}_{L_\xi^\infty} \lesssim  N^{-1} {\|\wh{a} \|}_{L_\xi^\infty}{\|\wh{b} \|}_{L_\xi^\infty} {\| c \|}_{L_x^\infty}.
 \]
The remaining case can be treated similarly, as $|\xi_1\xi_2|m$ is also Coifman--Meyer.

{\it Proof of \eqref{TriEst2}.} Suppose  $|\xi_3| m$ is a Coifman--Meyer symbol supported in a region where $|\xi_3| \gtrsim \max\{|\xi_2|,|\xi_1|\}$.

We can obtain the first estimate in \eqref{TriEst2} as follows:
\begin{align*}
{\| \F\bigl(T_m[a,b,c_{>N}]\bigr) \|}_{L^\infty_\xi}& \lesssim {\| \F\bigl(T_{|\xi_3|m}[a,b,\an c_{>N}]\bigr) \|}_{L^\infty_\xi}
\\ &\lesssim {\|a\|}_{L_x^\infty} {\| b \|}_{L_x^2} {\|c_{>N} \|}_{\dot H_x^{-1}}
 \lesssim N^{-\frac12} {\|a\|}_{L_x^\infty} {\| b \|}_{L_x^2} {\| \wh{c} \|}_{L_\xi^\infty}.
\end{align*}

To obtain the second estimate in \eqref{TriEst2}, we decompose $b = b_{\leq N} + b_{> N}$. Using \eqref{lem:bernstein} as well, we first have
\begin{align*}
{\| \F\bigl(T_{|\xi_3|m}[a,b_{\leq N},\an c_{>N}]\bigr) \|}_{L^\infty_\xi}
\lesssim {\|a \|}_{L_x^\infty} {\| b_{\leq N}\|}_{L_x^2} {\| c_{>N} \|}_{\dot H_x^{-1}}
\lesssim N^{-\frac12} {\| a \|}_{L_x^\infty} {\|\wh{b} \|}_{L_\xi^\infty}  {\| c \|}_{L_x^2}.
\end{align*}
Next, note that under our assumptions, $|\xi_2|m$ is also Coifman--Meyer.  Thus,
\begin{align*}
{\| \F\bigl(T_{|\xi_2|m}[a, \an b_{>N}, c_{>N}]\bigr) \|}_{L^\infty_\xi}
\lesssim {\|a \|}_{L_x^\infty} {\| b_{> N}\|}_{\dot H_x^{-1}} {\| c \|}_{L_x^2}
\lesssim N^{-\frac12}{\| a \|}_{L_x^\infty} {\|\wh{b} \|}_{L_\xi^\infty} {\| c \|}_{L_x^2}.
\end{align*}

{\it Proof of \eqref{TriEst3}.} Suppose $|\xi_3|m$ is a Coifman--Meyer symbol supported on a region where $|\xi_3|\gtrsim\max\{|\xi_1|,|\xi_2|\}$.

We can obtain the first estimate in \eqref{TriEst3} as follows:
\begin{align*}
{\| T_{|\xi_3|m}(a,b, \an c_{>N}) \|}_{L_x^2}
\lesssim {\|a\|}_{L_x^\infty} {\| b \|}_{L_x^\infty} {\|c_{>N} \|}_{\dot H_x^{-1}}
\lesssim  N^{-\frac12}{\| a \|}_{L_x^\infty} {\|b\|}_{L_x^\infty} {\| \wh{c} \|}_{L_\xi^\infty}.
\end{align*}

To obtain the second estimate in \eqref{TriEst3}, we proceed as above and decompose $b=b_{\leq N}+b_{>N}$.  Using \eqref{lem:bernstein} as well, we first have
\begin{align*}
{\| T_{|\xi_3|m}(a,b_{\leq N},\an c_{>N}) \|}_{L_x^2}
\lesssim {\|a\|}_{L_x^\infty} {\| b_{\leq N} \|}_{L_x^2} {\| \an c_{>N} \|}_{L_x^\infty}
\lesssim N^{-\frac12}{\| a \|}_{L_x^\infty} {\|\wh{b}\|}_{L_\xi^\infty} {\| c \|}_{L_x^\infty}.
\end{align*}
As $|\xi_2| m$ is Coifman--Meyer, we can also estimate
\begin{align*}
{\| T_{|\xi_2|m}(a, \an b_{>N}, c_{>N}) \|}_{L_x^2}
\lesssim {\|a\|}_{L_x^\infty} {\| b_{> N} \|}_{\dot H_x^{-1}} {\| c \|}_{L_x^\infty}
\lesssim N^{-\frac12}{\| a \|}_{L_x^\infty}  {\|\wh{b}\|}_{L_\xi^\infty} {\| c \|}_{L_x^\infty}.
\end{align*}
This completes the proof.
\end{proof}

		\section{Proof of Proposition~\ref{prop:fhat}}
		\label{sec:fhat}
		
	In this section we prove the estimate \eqref{eq:fhat bootstrap} for $\wh{f}(t).$ Using \eqref{eq:duhamel} we see that it suffices to estimate the following terms in $L_\xi^\infty$:
\begin{align}
\label{eq:fhat bootstrap1}
&\int_1^t\iint_{\R^2} e^{is\Phi}\, \who{f}(\xi_1)\who{f}(\xi_2)\who{f}(\xi_3)\ddd,\\
\label{eq:fhat bootstrap2}
&\int_1^t\iint_{\R^2} e^{is\Psi}\, \wh{f}(\xi_1)\wh{f}(\xi_2)\wh{f}(\xi_3)\ddd, \\
\label{eq:fhat bootstrap3}
&\int_1^t\iint_{\R^2} e^{is\Omega}\,\who{f}(\xi_1)\who{f}(\xi_2)\wh{f}(\xi_3)\ddd,\\
\label{eq:fhat bootstrap4}
&\int_1^t\F(e^{-is\partial_{xx}/2}\vert u\vert^2u\big)(s,\xi)\,ds,
\end{align}		
where the phases $\Phi,\Psi,\Omega$ are as in \eqref{Phases2} and we use the notation from \eqref{notfreq}.
	
		\subsection{Estimation of
			\texorpdfstring{\eqref{eq:fhat bootstrap1}}{Lg}}\label{sec:fhat1}

	We recall the notation from \eqref{eq:cutoff} and write
			$$1=
			[\phi_{lo}(\xi_1)+\phi_{hi}(\xi_1)]
			[\phi_{lo}(\xi_2)+\phi_{hi}(\xi_2)]
			[\phi_{lo}(\xi_3)+\phi_{hi}(\xi_3)]
			$$
in the integrand of \eqref{eq:fhat bootstrap1}. Expanding the product, we encounter two types of terms: (i) the low frequency term $\phi_{lo}(\xi_1)\phi_{lo}(\xi_2)\phi_{lo}(\xi_3)$, (ii) terms are of the form $\phi_*(\xi_j)\phi_*(\xi_k)\phi_{hi}(\xi_{\ell}),$ where $j,k,\ell\in\{1,2,3\}.$
				
	We estimate the contribution of the low frequency term by using volume bounds:
\begin{equation}
 \label{eq:fhat1 LLL}
\bigg\| \int_1^t\iint_{\R^2} e^{is\Phi} \who{f_{lo}}(\xi_1) \who{f_{lo}}(\xi_2) \who{f_{lo}}(\xi_3)\ddd \bigg\|_{L_\xi^\infty}
  \lesssim \int_1^t s^{-1}\|\wh{f}(s)\|_{L_\xi^\infty}^3\,ds,
\end{equation}
which is an acceptable contribution to the right-hand side of \eqref{eq:fhat bootstrap}.

For the terms of type (ii) we write $1=\chi_1(\vec{\xi})+\chi_2(\vec{\xi})+\chi_3(\vec{\xi}) $ for $\vec{\xi}\in\R^3$,
where each $\chi_j$ is a smooth Coifman--Meyer multiplier such that 
	\begin{equation}
	\label{eq:max specified0} 
	\vert\xi_j\vert\geq\max\{\tfrac{9}{10}\vert\xi_k\vert :k\neq j\}\quad\text{for all}\quad\vec{\xi}\in\text{support}(\chi_j).
	\end{equation}
See Appendix~\ref{section:appendix} for the construction of such multipliers.
We will show how to estimate the contribution from $\chi_3$. The same ideas suffice to treat the (almost symmetric) contributions from $\chi_1$ and $\chi_2$.  Note that on the support of $\chi_3$ we need only consider the contribution of the terms containing $\wh{f}_{hi}(\xi_3)$; indeed, if $|\xi_3| \lesssim s^{-\frac{1}{2}}$, then $\max_j\vert\xi_j\vert\lesssim s^{-\frac12}$ and we can estimate with volume bounds as we did for \eqref{eq:fhat1 LLL} above.  Thus, it suffices to consider the contribution of the term
\begin{equation}
\label{eq:fhat1 ii} \int_1^t\iint_{\R^2} e^{is\Phi} \chi_3(\vec{\xi}) \who{f_*}(\xi_1) \who{f_*}(\xi_2) \who{f_{hi}}(\xi_3)\ddd.
\end{equation}
In the region of integration in \eqref{eq:fhat1 ii} we have $\Phi\neq 0$, and in particular
$|\Phi| \gtrsim |\vec{\xi}|^2 \sim \xi_3^2$. We may therefore use the identity $e^{is\Phi} = (i\Phi)^{-1}\partial_s e^{is\Phi}$
and integrate by parts to write
		\begin{align}
		\eqref{eq:fhat1 ii}&=
			\label{eq:fhat1 p1}	
		\bigg[ \iint_{\R^2} \dfrac{e^{is\Phi}}{i\Phi} \chi_3(\vec{\xi}) \,
		\who{f_*}(\xi_1)\who{f_*}(\xi_2)\who{f_{hi}}(\xi_3)
		\dd\bigg]_{s=1}^t
		\\&\quad\label{eq:fhat1 p2}	
		-\int_1^t\iint_{\R^2} \dfrac{e^{is\Phi}}{i\Phi} \chi_3(\vec{\xi})
		\partial_s[\phi_*(\xi_1)\phi_*(\xi_2)\phi_{hi}(\xi_3)] \,
		\who{f}(\xi_1)\who{f}(\xi_2)\who{f}(\xi_3)\ddd
		\\&\quad\label{eq:fhat1 p3}	
		-\int_1^t\iint_{\R^2} \dfrac{e^{is\Phi}}{i\Phi} \chi_3(\vec{\xi})\phi_*(\xi_1)
		[\partial_s\who{f}(\xi_1)]
		\who{f_*}(\xi_2)\who{f_{hi}}(\xi_3)\ddd
		\\&\quad\label{eq:fhat1 p4}	
		-\int_1^t\iint_{\R^2}\dfrac{e^{is\Phi}}{i\Phi} \chi_3(\vec{\xi}) \,
		\who{f_*}(\xi_1)\phi_*(\xi_2)
		[\partial_s\who{f}(\xi_2)]\who{f_{hi}}(\xi_3)\ddd
		\\&\quad \label{eq:fhat1 p5} 
		-\int_1^t\iint_{\R^2}\dfrac{e^{is\Phi}}{i\Phi} \chi_3(\vec{\xi}) \,
		\who{f_*}(\xi_1)\who{f_*}(\xi_2)
		\phi_{hi}(\xi_3)[\partial_s\who{f}(\xi_3)]\ddd.
		\end{align}

Using the notation from \eqref{def:trilinear}, we notice that we can write
\begin{equation*}
\eqref{eq:fhat1 p1}= \Big[ e^{\frac{is\xi^2}{2}}\F\big(T_{m}[\bar{u}_*,\bar{u}_*, \bar{u}_{hi}]\big)(s,\xi) \Big]_{s=1}^t
               \end{equation*}
where $m = \chi_3(\vec{\xi})(i\Phi)^{-1}$ is a symbol satisfying the hypotheses of Lemma~\ref{lem:tri}(i).  That is, $m$ is supported on a region where $|\xi_3|\gtrsim\max\{|\xi_1|,|\xi_2|\}$, and one can check that $\xi_3^2m$ is Coifman--Meyer.  We apply \eqref{TriEst1} with $N\sim s^{-\frac12}$ (cf. \eqref{eq:cutoff}) to obtain
$$\|\eqref{eq:fhat1 p1}\|_{L_\xi^\infty}
	\lesssim \|\wh{f}(1)\|_{L_\xi^\infty}^2 \|u(1)\|_{L_x^\infty}
	+t^{\frac12}\|\wh{f}(t)\|_{L_\xi^\infty}^2 \|u(t)\|_{L_x^\infty}.$$
In view of Lemma \ref{lem:decay}, this is an acceptable contribution to the right-hand side of \eqref{eq:fhat bootstrap}.


	We next turn to \eqref{eq:fhat1 p2}.
From the definition of the cutoffs $\phi_{lo}$ and $\phi_{hi}$ (cf. \eqref{eq:cutoff}), we have $\partial_s\phi_*(\xi_j) = \pm (1/2) s^{-1/2}\xi_j \,\phi'(s^{1/2}\xi_j).$
As multiplication by $s^{1/2}\xi_j \phi'(s^{1/2}\xi_j)$ corresponds to a bounded projection to frequencies of size $\sim s^{-1/2}$, we can write $\partial_s\phi_*(\xi_j)\wh{f}(\xi_j) = s^{-1} \wh{f_{med}}(\xi_j)$, where $f_{med}$ denotes such a projection of $f$.  Distributing the derivatives and considering all of the possibilities, one can see that to treat \eqref{eq:fhat1 p2} it ultimately suffices to show how to estimate a term such as the following:
	\begin{align}
	&\int_1^t\iint_{\R^2}s^{-1}
	\dfrac{e^{is\Phi}}{\Phi} \chi_3(\vec{\xi}) \, \who{f_*}(\xi_1)\who{f_*}(\xi_2)\who{f_{med}}(\xi_3)\ddd.
	\label{eq:fhat1 p21}	
	\end{align}
This can be estimated as we did above for \eqref{eq:fhat1 p1}, using Lemma~\ref{lem:tri}(i):
	\begin{align*}
	\|\eqref{eq:fhat1 p21}\|_{L_\xi^\infty} & \lesssim
	\int_1^t s^{-\frac12} \, \|\wh{f}(s)\|_{L^\infty_\xi}^2\|u(s)\|_{L^\infty_x} \,ds,
	\end{align*}
which is acceptable in view of \eqref{eq:Xdecay}. 
	
	
	We next turn to \eqref{eq:fhat1 p3}. Noting that
\begin{equation}
\label{dsf}
e^{is\partial_{xx}/2}\partial_s f = (\partial_s +\tfrac{i}{2}\partial_{xx})u = \text{\O}(u^3),
\end{equation}
we can  use  Lemma~\ref{lem:tri}(i) with $(a,b,c) = (u_*,e^{is\partial_{xx}/2}\partial_s f_*, u_{hi})$
to get:
		\begin{align*}
		\|\eqref{eq:fhat1 p3}\|_{L_\xi^\infty}
		& \lesssim\int_1^t
		s^{\frac{1}{2}} \| \wh{u}(s) \|_{L_\xi^\infty} \|\text{\O}(u^3(s))\|_{L_x^\infty} \|\wh{u}(s)\|_{L_\xi^\infty}\,ds
		\lesssim \int_1^t s^{-1}[s^{\frac12}\|u(s)\|_{L_x^\infty}]^3 \|\wh{f}(s)\|_{L_\xi^\infty}^2\,ds,
		\end{align*}
which is acceptable.  We can treat \eqref{eq:fhat1 p4} in the same way, as we can exchange the role of $a$ and $b$ in \eqref{TriEst1}.


The term \eqref{eq:fhat1 p5} can be treated similarly, using
the second inequality in \eqref{TriEst1} with $(a,b,c)=(u_*,u_*,P_{hi}e^{is\partial_{xx}/2}\partial_s f)$:
\begin{align*}
{\| \eqref{eq:fhat1 p5} \|}_{L_\xi^\infty} & \lesssim \int_1^t s^{\frac{1}{2}} \| \wh{f}(s) \|_{L_\xi^\infty}^2 \|\text{\O}(u^3(s))\|_{L_x^\infty} \, ds,
\end{align*}
which is acceptable (cf. \eqref{eq:Xdecay}).  This completes the estimation 
\eqref{eq:fhat bootstrap1}. 

		\subsection{Estimation of \texorpdfstring{\eqref{eq:fhat bootstrap2}}{Lg}} \label{sec:fhat2}
		
As before we write
		$$1=
		[\phi_{lo}(\xi_1)+\phi_{hi}(\xi_1)]
		[\phi_{lo}(\xi_2)+\phi_{hi}(\xi_2)]
		[\phi_{lo}(\xi_3)+\phi_{hi}(\xi_3)]
		$$
in the integrand of \eqref{eq:fhat bootstrap2}.  We estimate the contribution of $\phi_{lo}(\xi_1)\phi_{lo}(\xi_2)\phi_{lo}(\xi_3)$ by volume bounds, as in \eqref{eq:fhat1 LLL}.

For the remaining terms we once again write $1=\chi_1(\vec{\xi}) +\chi_2(\vec{\xi}) +\chi_3(\vec{\xi}) $ for $\vec{\xi}\in\R^3$,
where each $\chi_j$ is a smooth Coifman--Meyer multiplier such that \eqref{eq:max specified0} holds.  We will show how to estimate the contribution of $\chi_2$. Similar ideas suffice to treat the contribution of $\chi_1$ and $\chi_3$
(see Remark \ref{remark:fhat} below for more details).  We need only consider the contribution of $\chi_2$ in terms containing $\wh{f}_{hi}(\xi_2)$,
since if $|\xi_2| \lesssim s^{-\frac{1}{2}}$, then $\max_j\vert\xi_j\vert\lesssim s^{-\frac12}$
and we can simply estimate using volume bounds, as we did for the low frequency term.

	On the support of $\chi_2(\vec{\xi})$ we further decompose
	$1= \chi_\eta(\vec{\xi})+ \chi_\sigma(\vec{\xi})+ \chi_s(\vec{\xi}),$
	and let $\chi_{2,*}:=\chi_2\chi_*$ be smooth Coifman--Meyer multipliers such that
	\begin{align}
	&\vert\xi_1-\xi_2\vert\geq\tfrac{1}{100}\vert\xi_2\vert
	\quad\text{for}\quad
	\vec{\xi}\in\text{support}(\chi_\eta),
	\label{eq:chi-eta}	
	\\
	&\vert\xi_3-\xi_2\vert\geq\tfrac{1}{100}\vert\xi_2\vert
	\quad\text{for}\quad
	\vec{\xi}\in\text{support}(\chi_\sigma),
	\label{eq:chi-sigma}	
	\\
	&\vert\xi_1-\xi_2\vert\leq\tfrac{1}{50}\vert\xi_2\vert
	\quad\text{and}\quad
	\vert\xi_3-\xi_2\vert\leq\tfrac{1}{50}\vert\xi_2\vert
	\quad\text{for}\quad
	\vec{\xi}\in\text{support}(\chi_s).
	\label{eq:chi-s}		
	\end{align}
See Appendix~\ref{section:appendix} for the construction of such multipliers.
The subscripts indicate the variable with respect to which we will integrate by parts.
According to this decomposition of the frequency space, we are faced with estimating the following three terms:
	\begin{align}
	&\int_1^t \iint_{\R^2} e^{is\Psi}
	\chi_{2,\eta}(\vec{\xi})
	\wh{f_*}(\xi_1)\wh{f_{hi}}(\xi_2)\wh{f_*}(\xi_3) \ddd,
	\label{eq:fhat2 eta} 
	\\
	&\int_1^t\iint_{\R^2} e^{is\Psi}
	\chi_{2,\sigma}(\vec{\xi})
	\wh{f_*}(\xi_1)\wh{f_{hi}}(\xi_2)\wh{f_*}(\xi_3)\ddd,
	\label{eq:fhat2 sig} 
	\\
	&\int_1^t \iint_{\R^2} e^{is\Psi}
	\chi_{2,s}(\vec{\xi})
	\wh{f_*}(\xi_1)\wh{f_{hi}}(\xi_2)\wh{f_*}(\xi_3)\ddd.
	\label{eq:fhat2 s}
	\end{align}


\subsubsection{Estimation of \eqref{eq:fhat2 eta}}
Using \eqref{Phases2}--\eqref{Phases3}, we see that on the support of $\chi_{2,\eta}$ we have
	\begin{equation}
	\label{eq:fhat2 eta lb}    
	\vert\partial_\eta \Psi \vert
	=\vert \xi_2-\xi_1\vert
	\gtrsim \vert \xi_2\vert
	\gtrsim \vert \vec{\xi}\vert.
	\end{equation}
Thus we can use the identity $e^{is\Psi} = \partial_\eta e^{is\Psi} (is\partial_\eta\Psi)^{-1}$ and integrate by parts in $\eta$ to write
	\begin{align}
	\eqref{eq:fhat2 eta}
	&=-\int_1^t\iint_{\R^2} e^{is\Psi}
	\partial_\eta\big(\dfrac{1}{is\partial_\eta\Psi}\big)
	\chi_{2,\eta}(\vec{\xi})
	\wh{f_*}(\xi_1)\wh{f_{hi}}(\xi_2)\wh{f_*}(\xi_3)\ddd
	\label{eq:fhat2 eta1} 		
	\\ &\quad-\int_1^t\iint_{\R^2} e^{is\Psi}
	\dfrac{\partial_\eta[\chi_{2,\eta}(\vec{\xi})
	\phi_*(\xi_1)\phi_{hi}(\xi_2)]}
	{is\partial_\eta\Psi}
	\wh{f}(\xi_1)\wh{f}(\xi_2)\wh{f_*}(\xi_3)\ddd
	\label{eq:fhat2 eta2}		
	\\ &\quad-\int_1^t\iint_{\R^2} \dfrac{e^{is\Psi}}
	{is\partial_\eta\Psi}
	\chi_{2,\eta}(\vec{\xi})
	\phi_*(\xi_1)\partial_\eta\wh{f}(\xi_1)\wh{f_{hi}}(\xi_2)\wh{f_*}(\xi_3)\ddd
	\label{eq:fhat2 eta3}		
	\\ &\quad-\int_1^t\iint_{\R^2} \dfrac{e^{is\Psi}}
	{is\partial_\eta\Psi}
	\chi_{2,\eta}(\vec{\xi})
	\wh{f_*}(\xi_1)\phi_{hi}(\xi_2)\partial_\eta\wh{f}(\xi_2)\wh{f_*}(\xi_3)\ddd.
	\label{eq:fhat2 eta4}		
	\end{align}
	
	We first estimate \eqref{eq:fhat2 eta1}. Using \eqref{Phases3}, we see that $\partial_\eta(1/\partial_\eta\Psi) = -2|\xi_2-\xi_1|^{-2}$.  Recalling \eqref{eq:fhat2 eta lb} and the fact that $|\xi_2|\gtrsim\max\{|\xi_1|,|\xi_2|\}$ in the integral above, we can write
\begin{align*}
\eqref{eq:fhat2 eta1} = \int_{1}^t (is)^{-1} e^{\frac{is|\xi|^2}{2}} \mathcal{F} \big( T_{m}[u_*, u_{hi}, u_*]\big)(s,\xi)  \, ds,
\end{align*}
where the symbol $m = \partial_\eta\big(1/\partial_\eta\Psi\big) \chi_{2,\eta}$
satisfies the hypotheses of Lemma~\ref{lem:tri}(i).  Here and throughout Section~\ref{sec:fhat2}, $\xi_2$ plays the role of $\xi_3$ in the application of Lemma~\ref{lem:tri}.  Applying \eqref{TriEst1}, we obtain an acceptable contribution:
\begin{align*}
\| \eqref{eq:fhat2 eta1} \|_{L_\xi^\infty} & \lesssim \int_{1}^t  s^{-\frac12} \, \| \wh{f}(s) \|_{L_\xi^\infty}^2 \|u(s)\|_{L_x^\infty} \, ds.
\end{align*}


We next turn to \eqref{eq:fhat2 eta2}. Two types of terms arise, depending on where $\partial_\eta$ lands.
First, if $\partial_\eta$ lands on $\chi_{2,\eta}$ we are led to consider the following:
\begin{align}
& \int_1^t\iint_{\R^2} s^{-1}
\, e^{is\Psi} \dfrac{\partial_\eta\chi_{2,\eta}(\vec{\xi})}{\partial_\eta\Psi} \wh{f_*}(\xi_1)\wh{f_{hi}}(\xi_2)\wh{f_*}(\xi_3)\ddd.
\label{eq:fhat2 eta21}	
\end{align}
Second, we note that $\partial_\eta\phi_*(\xi_j) = \pm s^{\frac12}\phi'(s^{\frac12}\xi_j)$,
and that multiplication by $\phi'(s^{\frac12}\cdot)$ corresponds to a projection to frequencies $\sim s^{-\frac12}$.  As before, we denote this by $P_{med}f = f_{med}$.  Considering all of the possibilities, one can see that to treat the terms that arise when $\partial_\eta$
lands on one of the $\phi_*(\xi_j)$, it suffices to estimate the terms
	\begin{align}
\label{eq:fhat2 eta22}	
& \int_1^t\iint_{\R^2} s^{-\frac12} \, e^{is\Psi} \dfrac{\chi_{2,\eta}(\vec{\xi})}{\partial_\eta\Psi}
	\wh{f_*}(\xi_1) \wh{f_{med}}(\xi_2) \wh{f_*}(\xi_3)\ddd,
\\
\label{eq:fhat2 eta23}
& \int_1^t\iint_{\R^2} s^{-\frac12} \, e^{is\Psi} \dfrac{\chi_{2,\eta}(\vec{\xi})}{\partial_\eta\Psi}
	\wh{f_{med}}(\xi_1) \wh{f_{hi}}(\xi_2) \wh{f_*}(\xi_3)\ddd.
	\end{align}
Thus, to treat \eqref{eq:fhat2 eta2} it suffices to estimate \eqref{eq:fhat2 eta21}--\eqref{eq:fhat2 eta23}.

For \eqref{eq:fhat2 eta21} we use \eqref{eq:fhat2 eta lb} and the fact that $\xi_2\partial_\eta\chi_{2,\eta}(\vec{\xi})$ is Coifman--Meyer to write
\begin{align*}
\eqref{eq:fhat2 eta21} = \int_1^t s^{-1} e^{\frac{is\xi^2}{2}} \mathcal{F}\big(T_m[u_*, u_{hi},u]\big)(s,\xi)\,ds,
\end{align*}
where $m$ satisfies the hypotheses of Lemma~\ref{lem:tri}(i).  The estimate \eqref{TriEst1} then gives
\begin{align*}
\| \eqref{eq:fhat2 eta21} \|_{L_\xi^\infty} \lesssim \int_1^t s^{-\frac12} \|\wh{f}(s)\|_{L_\xi^\infty}^2 \|u(s)\|_{L_x^\infty} \,ds,
\end{align*}
which is acceptable.

Next, recalling once again \eqref{eq:fhat2 eta lb} and \eqref{eq:chi-eta}, we can write
\begin{equation*}
\eqref{eq:fhat2 eta22} = \int_1^t s^{-\frac12}  e^{\frac{is\xi^2}{2}} \F\big(T_m[u_*,u_{med},u_*]\big)(s,\xi)\,ds,
\end{equation*}
where $m$ is a symbol that satisfies the hypotheses of Lemma~\ref{lem:tri}(ii); that is, $|\xi_2| m$ is a Coifman--Meyer symbol supported on a region where $\xi_2$ is the largest frequency (up to a constant).  We apply \eqref{TriEst2} and \eqref{est0} to get the following acceptable estimate:
\begin{align*}
\| \eqref{eq:fhat2 eta22} \|_{L_\xi^\infty} \lesssim \int_1^t s^{-\frac14} \|u(s)\|_{L_x^\infty} \| u_{med} \|_{L_x^2} \|\wh{f}(s)\|_{L_\xi^\infty} \,ds
  \lesssim \int_1^t s^{-\frac12} \|u(s)\|_{L_x^\infty} \|\wh{f}(s)\|_{L_\xi^\infty}^2 \,ds.
 \end{align*}
As the term \eqref{eq:fhat2 eta23} can be estimated in the same way, this completes the treatment of \eqref{eq:fhat2 eta2}.


To estimate the term \eqref{eq:fhat2 eta3} we proceed similarly. We can write
\begin{equation*}
\eqref{eq:fhat2 eta3} = \int_1^t s^{-1}  e^{\frac{is\xi^2}{2}} \F\big(T_m[ P_*Ju, u_{hi},u_*]\big)(s,\xi)\,ds,
\end{equation*}
where $m$ is a symbol satisfying the hypotheses of Lemma~\ref{lem:tri}(ii). Using \eqref{TriEst2}, we obtain:
\begin{align*}
\| \eqref{eq:fhat2 eta3} \|_{L_\xi^\infty} \lesssim \int_1^t s^{-\frac34} \|u(s)\|_{L_x^\infty} \| Ju(s) \|_{L_x^2} \|\wh{f}(s)\|_{L_\xi^\infty} \,ds,
 \end{align*}
which is an acceptable contribution to the right-hand side of \eqref{eq:fhat bootstrap}.  The last term \eqref{eq:fhat2 eta4} can be estimated in the same way.



\subsubsection{Estimation of \eqref{eq:fhat2 sig}} This term is very similar to \eqref{eq:fhat2 eta}.
In particular, we note that on the support of $\chi_{2,\sigma}$ we have $\vert\partial_\sigma \Psi \vert
	=\vert \xi_2-\xi_3\vert
	\gtrsim \vert \xi_2\vert
	\gtrsim \vert \vec{\xi}\vert.$
Thus we can use the identity $e^{is\Psi} = (is\partial_\sigma\Psi)^{-1} \partial_\sigma e^{is\Psi}$
and integrate by parts in $\sigma$. The ideas used to estimate \eqref{eq:fhat2 eta} then suffice to handle the resulting terms.	
	

\subsubsection{Estimation of \eqref{eq:fhat2 s}} Using \eqref{Phases2} and \eqref{eq:chi-s},
we note that on the support of $\chi_{2,s}$, we have
	\begin{align}
	\vert\Psi \vert
	& =\vert (\xi_1-\xi_2)\xi_2+\xi_2(\xi_3-\xi_2)
		+\xi_1\xi_3+2\xi_2^2 \vert
	\nonumber
	\\
	& \geq 2\vert \xi_2\vert^2-\vert\xi_2\vert
		(\vert\xi_1-\xi_2\vert +
		\vert\xi_3-\xi_2\vert)
		-\vert\xi_1\vert \vert\xi_3\vert
	\geq \vert\xi_2\vert^2 \sim \vert \xi_1\vert^2\sim\vert\xi_3\vert^2 \sim \vert\vec{\xi}\vert^2.	
	\label{eq:fhat2 s lb}    
	\end{align}
We integrate by parts in $s$ using the identity $e^{is\Psi} = (i\Psi)^{-1}\partial_s e^{is\Psi}$. This yields
	\begin{align}
	\eqref{eq:fhat2 s}&=
	\bigg[ \iint_{\R^2} \dfrac{e^{is\Psi}\chi_{2,s}(\vec{\xi})}
	{i\Psi}\wh{f_*}(\xi_1)\wh{f_{hi}}(\xi_2)\wh{f_*}(\xi_3)
	\dd\bigg]_{s=1}^t
	\label{eq:fhat2 s1}	
	\\
	&\quad-\int_1^t\!\!\!\iint_{\R^2}\!\!
	\dfrac{e^{is\Psi}\chi_{2,s}(\vec{\xi})\partial_s[\phi_*(\xi_1)\phi_{hi}(\xi_2)\phi_*(\xi_3)]}{i\Psi}
	\wh{f}(\xi_1)\wh{f}(\xi_2)\wh{f}(\xi_3)\ddd
	\label{eq:fhat2 s2}	
	\\
	& \quad - \int_1^t\iint_{\R^2}
	\dfrac{e^{is\Psi} \chi_{2,s}(\vec{\xi}) \phi_*(\xi_1) \phi_{hi}(\xi_2) \phi_*(\xi_3)}{i\Psi}
	\partial_s\bigl[ \wh{f}(\xi_1) \wh{f}(\xi_2) \wh{f}(\xi_3) \bigr] \ddd.
	\label{eq:fhat2 s3} 	
	\end{align}
In light of the lower bound \eqref{eq:fhat2 s lb}, these terms are similar to those in \eqref{eq:fhat1 p1}--\eqref{eq:fhat1 p5}.

First, using \eqref{eq:fhat2 s lb}, we can write \eqref{eq:fhat2 s1} as
\begin{equation*}
\Big[ e^{\frac{is|\xi|^2}{2}} \mathcal{F}\big( T_m [u_*,u_{hi},u_*] \big)(s,\xi) \Big]_{s=1}^t,
\end{equation*}
where the symbol $m = \chi_2(i\Psi)^{-1}$ satisfies the hypotheses of Lemma~\ref{lem:tri}(i) (up to exchanging the role of $\xi_3$ and $\xi_2$, as above).  The estimate \eqref{TriEst1} (applied, as always, with $N\sim s^{-\frac12}$) yields
\begin{align*}
\| \eqref{eq:fhat2 s1}  \|_{L_\xi^\infty} \lesssim \|u(1)\|_{L_x^\infty} \|\wh{f}(1)\|^2_{L_\xi^\infty}  + t^{\frac12} \, \|u(t)\|_{L_x^\infty} \|\wh{f}(t)\|^2_{L_\xi^\infty},
\end{align*}
which is an acceptable contribution to the right-hand side of \eqref{eq:fhat bootstrap}.


We next turn to \eqref{eq:fhat2 s2}.
As observed earlier, we can write $\partial_s\phi_*(\xi_j)\wh{f}(\xi_j) = s^{-1} \wh{f_{med}}(\xi_j)$, where $f_{med}$ denotes
the projection of $f$ to frequencies $\sim s^{-1/2}$.
Considering all of the possibilities, one can see that to treat \eqref{eq:fhat2 s2} it ultimately suffices to show how to bound a term such as
	\begin{equation}
	\label{eq:fhat2 s21}
	\int_1^t\iint_{\R^2} s^{-1} \dfrac{e^{is\Psi}\,\chi_{2,s}(\vec{\xi})}{\Psi}
	\wh{f_*}(\xi_1) \wh{f_{med}}(\xi_2) \wh{f_*}(\xi_3)\ddd.
	\end{equation}
We can estimate this term as we did \eqref{eq:fhat1 p21}, using \eqref{TriEst1}.
	
For the term \eqref{eq:fhat2 s3}, we can proceed as we did for \eqref{eq:fhat1 p3}--\eqref{eq:fhat1 p5}: the hypotheses of Lemma~\ref{lem:tri}(i) hold, and we can use \eqref{TriEst1} and \eqref{dsf} to estimate
\begin{align*}
\| \eqref{eq:fhat2 s3}  \|_{L_\xi^\infty} \lesssim \int_1^t s^{\frac{1}{2}} \| \wh{f} \|_{L_\xi^\infty}^2 \|\text{\O}(u^3(s))\|_{L_x^\infty} \, ds,
\end{align*}
which is acceptable.


\begin{remark}\label{remark:fhat}
We have now estimated \eqref{eq:fhat2 eta}--\eqref{eq:fhat2 s}.  This handles the contribution to \eqref{eq:fhat bootstrap2}
associated with the cutoff $\chi_2$, that is, the region where $|\xi_2| \gtrsim \max\{|\xi_1|,|\xi_3|\}$ (cf. \eqref{eq:max specified0}). We now briefly discuss how to treat the terms containing $\chi_1$ or $\chi_3$.

In estimating the contribution from $\chi_2$, the key idea was to decompose frequency space into regions
such that at least one of $\vert\partial_\eta\Psi\vert$, $\vert\partial_\sigma\Psi\vert$, or $\vert\Psi\vert$ was suitably bounded below.
In the support of $\chi_1$, using \eqref{Phases3}, we can achieve such a decomposition as follows:
\begin{itemize}	
\item First, if $\vert\xi_1-\xi_2\vert\geq\tfrac{1}{100}\vert\xi_1\vert$
then $\vert\partial_\eta\Psi\vert\gtrsim \vert\xi_1\vert\gtrsim\vert\vec{\xi}\vert$.

\item Next, if $\vert\xi_1-\xi_2\vert\leq\tfrac{1}{50}\vert\xi_1\vert$ and $\vert\xi_3-\xi_2\vert\geq\tfrac{1}{100}\vert\xi_2\vert$,
then $\vert\partial_\sigma\Psi\vert\gtrsim\vert\xi_2\vert\gtrsim\vert\vec{\xi}\vert$.

\item Finally, if $\vert\xi_1-\xi_2\vert\leq\tfrac{1}{50}\vert\xi_1\vert$ and $\vert\xi_3-\xi_2\vert\leq\tfrac{1}{50}\vert\xi_2\vert$
then $\vert\Psi\vert\gtrsim\vert\xi_1\vert^2 \gtrsim \vert\vec{\xi}\vert^2$.
\end{itemize}
Thus, we can use arguments similar to the ones above to handle the contribution of $\chi_1$. Similar ideas also suffice to treat the contribution of $\chi_3.$
\end{remark}


\subsection{Estimation of \texorpdfstring{\eqref{eq:fhat bootstrap3}}{Lg}}\label{sec:fhat bootstrap3}
We can estimate \eqref{eq:fhat bootstrap3} in a very similar manner to \eqref{eq:fhat bootstrap2}.
To wit, we split each function into low and high frequency pieces, and we handle the term containing all low frequencies with volume bounds.
For the remaining terms, we decompose frequency space into regions where one of $\vert\xi_1\vert,\vert\xi_2\vert,\vert\xi_3\vert$ is (almost) the maximum,
according to \eqref{eq:max specified0}.
On each such region, we decompose into regions where we have suitable lower bounds on either the phase $\Omega$ or its derivatives.
Consider for example the region where $\vert\xi_2\vert\geq\max\{\tfrac{9}{10}\vert\xi_1\vert, \tfrac{9}{10}\vert\xi_3\vert\}.$
Then we can define cutoffs $\chi_\eta,\chi_\sigma,$ and $\chi_s$ so that $1=\chi_\eta+\chi_\sigma+\chi_s$ and
\begin{align*}
& \vert\xi_2-\xi_1\vert\geq\tfrac{1}{100}\vert\xi_2\vert \quad \text{for}\quad \vec{\xi}\in\text{support}(\chi_\eta),
\\
& \vert\xi_2+\xi_3\vert\geq\tfrac{1}{100}\vert\xi_2\vert \quad \text{for}\quad \vec{\xi}\in\text{support}(\chi_\sigma),
\\
&\vert\xi_2-\xi_1\vert\leq\tfrac{1}{50}\vert\xi_2\vert
  \quad \text{and}\quad \vert\xi_2+\xi_3\vert\leq\tfrac{1}{50}\vert\xi_2\vert
  \quad \text{for}\quad \vec{\xi}\in\text{support}(\chi_s).
\end{align*}
From the formulas \eqref{Phases2}--\eqref{Phases3} it is easy to see that we have suitable lower bounds for $\partial_\eta\Omega$
and $\partial_\sigma\Omega$ in the support of $\chi_\eta$ and $\chi_\sigma$, respectively.
Furthermore, we claim that $\vert\Omega\vert\gtrsim\vert\xi_2\vert^2\gtrsim \vert\vec{\xi}\vert^2$
for $\vec{\xi}$ in the support of $\chi_s$.
Indeed, we can write
\begin{align*}
\Omega=\xi_2^2+\xi_1(\xi_1-\xi_2)+\xi_1(\xi_2+\xi_3) +\xi_2(\xi_1-\xi_2)+\xi_2(\xi_2+\xi_3)
\end{align*}
and note that in the support of $\chi_s$, we have
$$\vert \xi_1(\xi_1-\xi_2)+\xi_1(\xi_2+\xi_3) +\xi_2(\xi_1-\xi_2)+\xi_2(\xi_2+\xi_3)\vert \leq \tfrac{3}{50}\xi_2^2.$$
Thus, proceeding as in the case of \eqref{eq:fhat bootstrap2}, we can deal with the term \eqref{eq:fhat bootstrap3}.
As the analysis is quite similar, we omit the details.

\subsection{Estimation of \texorpdfstring{\eqref{eq:fhat bootstrap4}}{Lg}}
We can handle the term \eqref{eq:fhat bootstrap4} in a relatively simple manner due to the gauge-invariance of $\vert u\vert^2 u$.
Using \eqref{eq:factorization} we can first rewrite
$$\eqref{eq:fhat bootstrap4}=\int_1^t s^{-1}\F \bar{M}(s)\F^{-1}\big(\vert \F \M f\vert^2\F \M f\big)(s,\xi)\,ds.$$
Writing $\F \b{M}(s)\F^{-1}=1+\F[\b{M}(s)-1]\F^{-1},$ it suffices to estimate the following:
	\begin{align}
	\label{eq:gauge1}
	&\int_1^t s^{-1}\big(\vert \F\M f\vert^2\F\M f\big)(s,\xi)\,ds,
	\\
	\label{eq:gauge2}
	&\int_1^t s^{-1}\F[\b{M}(s)-1]\F^{-1}\big(\vert\F\M f\vert^2
	\F\M f\big)(s,\xi)\,ds.
	\end{align}
	
Arguing as in the proof of Lemma~\ref{lem:decay} (see Section~\ref{sec:decay}), we find
	\begin{equation}
	\label{gauge X}
	\|\F\M(s) f(s)\|_{L_\xi^\infty}\lesssim \|u(s)\|_{X(s)},
	\end{equation}
and hence $\|\eqref{eq:gauge1}\|_{L_\xi^\infty}
	\lesssim \int_1^t s^{-1}\|u(s)\|_X^3\,ds,
	$
which is acceptable.

For \eqref{eq:gauge2} we again argue as in the proof of Lemma~\ref{lem:decay}. We split at frequency $\sqrt{s}$, using the operators $\bar{P}_{\leq N}=\F\phi(\cdot/N)\F^{-1}.$  For the high frequencies, we use \eqref{lem:bernstein}, Plancherel, the chain rule, and \eqref{eq:forms of Ju} to estimate
	\begin{align*}
	\|\bar{P}_{>\sqrt{s}}\F [\b{M}(s)-1]\F^{-1}\big(
	\vert \F \M f\vert^2\F\M f\big)(s)\|_{L_\xi^\infty} &\lesssim s^{-\frac14}\|\partial_x\F[\b{M}(s)-1]\F^{-1}\big(
	\vert \F \M f\vert^2\F\M f\big)(s)\|_{L_x^2}
	\\ &\lesssim s^{-\frac14}\|\partial_x\big(
	\vert \F \M f\vert^2\F\M f\big)(s)\|_{L_x^2}
	\\ &\lesssim s^{-\frac14}\|\F\M f(s)\|_{L_\xi^\infty}^2
			\|\partial_x\F \M f(s)\|_{L_x^2}
	\lesssim \|u(s)\|_X^3.
	\end{align*}
	
For the low frequencies we use 
the pointwise bound \eqref{eq:pw}, Plancherel, 
and \eqref{eq:forms of Ju} to estimate
	\begin{align*}
	\|\bar{P}_{\leq\sqrt{s}}\F [\b{M}(s)-1]\F^{-1}\big(
	\vert \F \M f\vert^2\F\M f\big)(s)\|_{L_\xi^\infty} &\lesssim \|\phi(\tfrac{\cdot}{\sqrt{s}})[\b{M}(s)-1]\F^{-1}
	\big(\vert \F \M f\vert^2\F\M f\big)(s)\|_{L_x^1}
	\\ &\lesssim s^{\frac14}\|[\b{M}(s)-1]\F^{-1}
	\big(\vert \F \M f\vert^2\F\M f\big)(s)\|_{L_x^2}
	\\ &\lesssim s^{-\frac14}\|x \F^{-1}
	\big(\vert \F \M f\vert^2\F\M f\big)(s)\|_{L_x^2}
	\\ &\lesssim s^{-\frac14}\|\F\M f\|_{L_\xi^\infty}^2\|\partial_x\F Mf(s)\|_{L_x^2}
	\lesssim \|u(s)\|_X^3.
	\end{align*}
Thus $\|\eqref{eq:gauge2}\|_{L_\xi^\infty} \lesssim \int_1^t s^{-1}\|u(s)\|_X^3\,ds,$ which is acceptable.
This completes the estimation of \eqref{eq:fhat bootstrap4}, which in turn completes the proof of Proposition~\ref{prop:fhat}.

		\section{Proof of Proposition~\ref{prop:Ju}}
		\label{sec:Ju}
In this section we prove the estimate \eqref{eq:Ju bootstrap} for $Ju(t)$.
Equivalently we will estimate $\partial_\xi\wh{f}(t)$ in $L_\xi^2$, cf. \eqref{eq:forms of Ju}. Using \eqref{eq:duhamel} and recalling the notation from \eqref{notfreq} and \eqref{Phases2}, we can write
	\begin{align}
	\partial_\xi\wh{f}(t)= \partial_\xi\wh{f}(1)
	&-i\lambda_1(2\pi)^{-1}\int_1^t\iint_{\R^2} e^{is\Phi}[\partial_\xi
		\who{f}(\xi_1)]\who{f}(\xi_2)\who{f}(\xi_3)\ddd
	\label{eq:Ju easy1}
	\\ &\quad-i\lambda_2(2\pi)^{-1}\int_1^t\iint_{\R^2} e^{is\Psi}[\partial_\xi
		\wh{f}(\xi_1)]\wh{f}(\xi_2)\wh{f}(\xi_3)\ddd
	\label{eq:Ju easy2}
	\\ &
	\quad-i\lambda_3(2\pi)^{-1}\int_1^t\iint_{\R^2}e^{is\Omega}[\partial_\xi\who{f}(\xi_1)]
	\who{f}(\xi_2)\wh{f}(\xi_3)\ddd
	\label{eq:Ju easy3}
	\\ &\quad-i\lambda_1(2\pi)^{-1}\int_1^t\iint_{\R^2}e^{is\Phi}[is\partial_\xi\Phi]
		\who{f}(\xi_1)\who{f}(\xi_2)\who{f}(\xi_3)\ddd
	\label{eq:Ju1}
	\\ &\quad-i\lambda_2(2\pi)^{-1}\int_1^t\iint_{\R^2}e^{is\Psi}[is\partial_\xi\Psi]
		\wh{f}(\xi_1)\wh{f}(\xi_2)\wh{f}(\xi_3)\ddd
	\label{eq:Ju2}
	\\ &\quad-i\lambda_3(2\pi)^{-1}\int_1^t\iint_{\R^2}e^{is\Omega}[is\partial_\xi\Omega]
	\who{f}(\xi_1)\who{f}(\xi_2)\wh{f}(\xi_3)\ddd
	\label{eq:Ju3}
	\\ &\quad-i\lambda_4\int_1^t\partial_\xi\F\big(e^{-is\partial_{xx}/2}\vert u\vert^2 u)(s,\xi)\,ds
	\label{eq:Ju gauge}.
	\end{align}
	
	For the terms \eqref{eq:Ju easy1}, \eqref{eq:Ju easy2}, and \eqref{eq:Ju easy3} we recall that $e^{is\partial_{xx}/2}xf=Ju(s)$. Thus
	\begin{align*}
	\|\eqref{eq:Ju easy1}
	&+\eqref{eq:Ju easy2}
	+ \eqref{eq:Ju easy3}\|_{L_\xi^2} \lesssim \int_1^t \| Ju(s) \, u^2(s)\|_{L_x^2}\,ds
	\lesssim \int_1^t \|u(s)\|_{L_x^\infty}^2 \|Ju(s)\|_{L_x^2}\,ds,
	\end{align*}
which (in light of Lemma~\ref{lem:decay}) is bounded by the right-hand side of \eqref{eq:Ju bootstrap}.
It remains to estimate \eqref{eq:Ju1} through \eqref{eq:Ju gauge}.


\subsection{Estimation of \texorpdfstring{\eqref{eq:Ju1}}{Lg}} \label{sec:Ju1}
We recall the notation from \eqref{eq:cutoff} and write
	$$
	1=
	[\phi_{lo}(\xi_1)+\phi_{hi}(\xi_1)]
	[\phi_{lo}(\xi_2)+\phi_{hi}(\xi_2)]
	[\phi_{lo}(\xi_3)+\phi_{hi}(\xi_3)]
	$$
in the integrand of \eqref{eq:Ju1}. We expand the product and encounter two types of terms: (i) $\phi_{lo}(\xi_1)
		\phi_{lo}(\xi_2)\phi_{lo}(\xi_3)$, (ii) $\phi_*(\xi_j)\phi_*(\xi_k)\phi_{hi}(\xi_\ell),$
where $j,k,\ell\in\{1,2,3\}$.

	We estimate the contribution of term (i) by volume bounds:
	\begin{equation}\label{eq:Ju10}
	\bigg\|\int_1^t \iint_{\R^2} e^{is\Phi}
	[s\partial_\xi\Phi]\who{f_{lo}}(\xi_1)
	\who{f_{lo}}(\xi_2)\who{f_{lo}}(\xi_3)\ddd
	\bigg\|_{L_\xi^2}
	\lesssim \int_1^t s^{-\frac34}
	\|\wh{f}(s)\|_{L_\xi^\infty}^3\,ds,
	\end{equation}
which is acceptable.

We now turn to the terms of type (ii).
As in Section~\ref{sec:fhat}, we write $1=\chi_1(\vec{\xi})+\chi_2(\vec{\xi})+\chi_3(\vec{\xi}) $ for $\vec{\xi}\in\R^3$ so that \eqref{eq:max specified0} holds,
and note that it suffices to show how to estimate
	\begin{equation}
	\label{eq:Ju11}	
	\int_1^t \iint_{\R^2}
	e^{is\Phi}[is\partial_\xi\Phi] \chi_3(\vec{\xi}) \, \who{f_*}(\xi_1)\who{f_*}(\xi_2) \who{f_{hi}}(\xi_3)\ddd.
	\end{equation}

In the region of integration in \eqref{eq:Ju11} we have $\Phi\neq 0$; in fact, $|\Phi| \gtrsim |\vec{\xi}|^2$.
We may therefore use the identity $ e^{is\Phi}= (i\Phi)^{-1} \partial_s e^{is\Phi}$ and integrate by parts to write
	\begin{align}
	\eqref{eq:Ju11}&
	= \bigg[ \iint_{\R^2}e^{is\Phi} \dfrac{s\partial_\xi\Phi}{\Phi} \chi_3(\vec{\xi}) \, \who{f_*}(\xi_1)\who{f_*}(\xi_2)\who{f_{hi}}(\xi_3)\dd\bigg]_{s=1}^t
	\label{eq:Ju111}	
	\\
& -\int_1^t\iint_{\R^2} e^{is\Phi} \dfrac{s\partial_\xi\Phi}{\Phi} \chi_3(\vec{\xi}) \, \partial_s[\phi_*(\xi_1)\phi_*(\xi_2)\phi_{hi}(\xi_3)]
  \who{f}(\xi_1)\who{f}(\xi_2)\who{f}(\xi_3)\ddd
\label{eq:Ju112}	
\\
& -\int_1^t\iint_{\R^2} e^{is\Phi} \dfrac{\partial_\xi\Phi}{\Phi} \chi_3(\vec{\xi})\, \who{f_*}(\xi_1)\who{f_*}(\xi_2)\who{f_{hi}}(\xi_3)\ddd
\label{eq:Ju113}	
\\
& -\int_1^t\iint_{\R^2} e^{is\Phi} \dfrac{s\partial_\xi\Phi}{\Phi}\chi_3(\vec{\xi}) \, \phi_*(\xi_1)\phi_*(\xi_2)\phi_{hi}(\xi_3)
 \partial_s \big[ \who{f}(\xi_1) \who{f}(\xi_2) \who{f}(\xi_3) \big] \ddd.
\label{eq:Ju114}	
\end{align}


We turn to \eqref{eq:Ju111} and fix $s\in\{1,t\}$.
In the support of the integral we have $|\xi_3| \gtrsim \max\{|\xi_1|,|\xi_2|\}$; thus, recalling \eqref{Phases2} and \eqref{Phases3}, we can write
	\begin{align*}
	\iint_{\R^2}e^{is\Phi} & \dfrac{s\partial_\xi\Phi}{\Phi} \chi_3(\vec{\xi}) \, \who{f_*}(\xi_1)\who{f_*}(\xi_2)\who{f_{hi}}(\xi_3)\dd
	= s \,e^{\frac{is\xi^2}{2}} \F\big(T_m[\bar{u}_*,\bar{u}_*, \bar{u}_{hi}]\big)(s,\xi),
	\end{align*}
where $m$ is a symbol satisfying the hypotheses of Lemma~\ref{lem:tri}(iii); that is, $|\xi_3|m$ is a Coifman--Meyer symbol.  Applying \eqref{TriEst3} with $N \sim s^{-1/2}$ as usual, we get
\begin{align*}
\| \eqref{eq:Ju111}  \|_{L_\xi^2} \lesssim \| u(1) \|_{L_x^\infty}^2 \| \wh{f}(1)\|_{L_\xi^\infty} + t^{\frac54} \| u(t) \|_{L_x^\infty}^2 \| \wh{f}(t)\|_{L_\xi^\infty}.
\end{align*}
In view of \eqref{eq:Xdecay}, this is an acceptable contribution to the right-hand side of \eqref{eq:Ju bootstrap}.


To estimate \eqref{eq:Ju112}, we recall that we can write
$\partial_s\phi_*(\xi_j)\wh{f}(\xi_j) = s^{-1} \wh{f_{med}}(\xi_j)$, where $f_{med}$ denotes the projection of $f$ to frequencies $\sim s^{-1/2}$.  Considering all of the possibilities, one can see that to treat \eqref{eq:Ju112} it ultimately suffices to show how to estimate a term such as
\begin{equation}\label{eq:Ju1121}	
\int_1^t \iint_{\R^2} e^{is\Phi} \dfrac{\partial_\xi\Phi}{\Phi} \chi_3(\vec{\xi}) \, \who{f_*}(\xi_1)\who{f_*}(\xi_2)\who{f_{med}}(\xi_3) \ddd.
\end{equation}
To this end, we write $\eqref{eq:Ju1121} = \int_1^t e^{\frac{is\xi^2}{2}} \F(T_m[\bar{u_*},\bar{u_*},\bar{u_{hi}}]) (s,\xi)\,ds$,
where $m$ is a symbol satisfying the hypotheses of Lemma~\ref{lem:tri}(iii). We now estimate using \eqref{TriEst3}, as we did for \eqref{eq:Ju111} above:
\begin{align*}
\| \eqref{eq:Ju1121}  \|_{L_\xi^2} \lesssim
  \int_1^t s^{\frac14} \| u(s) \|_{L_x^\infty}^2 \| \wh{f}(s)\|_{L_\xi^\infty} \, ds,
\end{align*}
which is an acceptable contribution to the right-hand side of \eqref{eq:Ju bootstrap}.
Note that \eqref{eq:Ju113} is a term of the same type and can be estimated similarly. We skip the details.
	

To estimate \eqref{eq:Ju114}, we once again use the fact that $\partial_\xi\Phi(\Phi)^{-1}\chi_3$ satisfies the hypotheses of Lemma~\ref{lem:tri}(iii), 
together with the identity $e^{is\partial_{xx}/2}\partial_s f = \text{\O}(u^3)$ (cf. \eqref{dsf}); thus \eqref{TriEst3} implies
\begin{align*}
\| \eqref{eq:Ju114}  \|_{L_\xi^2} \lesssim
  \int_1^t s^{\frac54} \| u(s) \|_{L_x^\infty} \| \text{\O}(u^3)(s) \|_{L_x^\infty} \| \wh{f}(s)\|_{L_\xi^\infty} \, ds
  \lesssim \int_1^t s^{\frac54} \| u(s) \|_{L_x^\infty}^4 \| \wh{f}(s)\|_{L_\xi^\infty} \, ds,
\end{align*}
which is acceptable in light of \eqref{eq:Xdecay}.


\subsection{Estimation of \texorpdfstring{\eqref{eq:Ju2}}{Lg}}\label{sec:Ju2}
As before we write
	$$
	1=[\phi_{lo}(\xi_1)+\phi_{hi}(\xi_1)]
	[\phi_{lo}(\xi_2)+\phi_{hi}(\xi_2)]
	[\phi_{lo}(\xi_3)+\phi_{hi}(\xi_3)]
	$$
in the integrand of \eqref{eq:Ju2}.  We estimate the contribution of the term  $\phi_{lo}(\xi_1)\phi_{lo}(\xi_2)\phi_{lo}(\xi_3)$ by volume bounds, as in \eqref{eq:Ju10}.

For the remaining terms we proceed as we did in Section~\ref{sec:fhat2} and write
$1=\chi_1(\vec{\xi})+\chi_2(\vec{\xi})+\chi_3(\vec{\xi})$ for $\vec{\xi}\in\R^3,$
where each $\chi_j$ is a smooth Coifman--Meyer multiplier such that \eqref{eq:max specified0} holds.
As before, we will show how to estimate the contribution of $\chi_2$; similar ideas suffice to treat the contribution of $\chi_1$ and $\chi_3$ (see Remark~\ref{remark:Ju} below). As before, we only need to consider the contribution of $\chi_2$ in terms containing $\wh{f}_{hi}(\xi_2)$,
since if $\max_j\vert\xi_j\vert\lesssim s^{-\frac12}$, then we can simply estimate by volume bounds, as we did for \eqref{eq:Ju10}.

On the support of $\chi_2(\vec{\xi})$ we further decompose $1=
	\chi_\eta(\vec{\xi})+
	\chi_\sigma(\vec{\xi})+
	\chi_s(\vec{\xi}),$
as we did in Section~\ref{sec:fhat2}; see \eqref{eq:chi-eta}--\eqref{eq:chi-s}. Once again we employ the notation $\chi_{2,*}=\chi_2\chi_*$ and  find ourselves faced with estimating the following:
	\begin{align}
	&\int_1^t\iint_{\R^2} e^{is\Psi}
	\chi_{2,\eta}(\vec{\xi})[is\partial_\xi\Psi]
	\wh{f_*}(\xi_1)\wh{f_{hi}}(\xi_2)\wh{f_*}(\xi_3)\ddd,
	\label{eq:Ju21}	
	\\
	&\int_1^t\iint_{\R^2} e^{is\Psi}
	\chi_{2,\sigma}(\vec{\xi})[is\partial_\xi\Psi]
	\wh{f_*}(\xi_1)\wh{f_{hi}}(\xi_2)\wh{f_*}(\xi_3)\ddd,
	\label{eq:Ju22}	
	\\
	&\int_1^t\iint_{\R^2} e^{is\Psi}
	\chi_{2,s}(\vec{\xi})[is\partial_\xi\Psi]
	\wh{f_*}(\xi_1)\wh{f_{hi}}(\xi_2)\wh{f_*}(\xi_3)\ddd.
	\label{eq:Ju23}	
	\end{align}


\subsubsection{Estimation of \eqref{eq:Ju21}}
On the support of $\chi_{2,\eta}$ we have
	\begin{equation}
	\label{eq:Ju21 lb}	
	\vert\partial_\eta\Psi\vert
	=\vert\xi_2-\xi_1\vert\gtrsim\vert\xi_2\vert
	\gtrsim\vert \vec{\xi}\vert.
	\end{equation}
Thus we can use the identity $e^{is\Psi}= (is\partial_\eta\Psi)^{-1} \partial_\eta e^{is\Psi}$ and integrate by parts:
	\begin{align}
	\eqref{eq:Ju21} & =
	-\int_1^t\iint_{\R^2} e^{is\Psi}\partial_\eta \big(\dfrac{\partial_\xi\Psi}{\partial_\eta\Psi}\big) \chi_{2,\eta}(\vec{\xi})
	\wh{f_*}(\xi_1)\wh{f_{hi}}(\xi_2)\wh{f_*}(\xi_3)\ddd \label{eq:Ju211}	
	\\
	& \quad -\int_1^t\!\iint_{\R^2} e^{is\Psi} \dfrac{\partial_\eta[\chi_{2,\eta}(\vec{\xi}) \phi_*(\xi_1)\phi_{hi}(\xi_2)]}{\partial_\eta\Psi}
	[\partial_\xi\Psi] \wh{f}(\xi_1)\wh{f}(\xi_2)\wh{f_*}(\xi_3)\ddd
	\label{eq:Ju212}	
	\\
	& \quad-\int_1^t\iint_{\R^2} e^{is\Psi}\chi_{2,\eta}(\vec{\xi}) \dfrac{\partial_\xi\Psi}{\partial_\eta\Psi}
	\phi_*(\xi_1)\partial_\eta\wh{f}(\xi_1) \wh{f_{hi}}(\xi_2)\wh{f_*}(\xi_3)\ddd \label{eq:Ju214}	
	\\
	& \quad-\int_1^t\iint_{\R^2} e^{is\Psi}\chi_{2,\eta}(\vec{\xi})\dfrac{\partial_\xi\Psi}
	{\partial_\eta\Psi}\wh{f_*}(\xi_1)\phi_{hi}(\xi_2) \partial_\eta\wh{f}(\xi_2)\wh{f_*}(\xi_3)\ddd. \label{eq:Ju215}	
	\end{align}

We first consider \eqref{eq:Ju211}.  In the support of the integral, we have $|\xi_2|\gtrsim\max\{|\xi_1|,|\xi_3|\}$; thus, recalling \eqref{Phases2}, \eqref{Phases3},  and \eqref{eq:Ju21 lb}, we can write
$ \eqref{eq:Ju211} = \int_1^t e^{\frac{is\xi^2}{2}} \F\big(T_m[u_*,u_{hi},u_*]\big)(s,\xi)\,ds,$
where $m = \partial_\eta (\partial_\xi\Psi/\partial_\eta\Psi)\chi_{2,\eta}$ satisfies the hypotheses of Lemma~\ref{lem:tri}(iii).
We can then apply \eqref{TriEst3}, as we did for the term \eqref{eq:Ju1121} above, to obtain an acceptable bound. As in Section~\ref{sec:fhat2}, throughout Section~\ref{sec:Ju2} we apply Lemma~\ref{lem:tri} with $\xi_2$ playing the role of $\xi_3$.


We next turn to \eqref{eq:Ju212}.
As before, $\partial_\eta\phi_*(\xi_j) = \pm s^{\frac12}\phi'(s^{\frac12}\xi_j)$,
and multiplication by $\phi'(s^{\frac12}\cdot)$ corresponds to a projection to frequencies $\sim s^{-\frac12}$, which we denote by $P_{med} f = f_{med}$.  Considering all of the possibilities, one finds that to treat \eqref{eq:Ju212}, it suffices to estimate
\begin{align}
\label{eq:Ju2121}
& \int_1^t\iint_{\R^2} \, e^{is\Psi} \dfrac{\partial_\eta \chi_{2,\eta}(\vec{\xi})}{\partial_\eta\Psi}[\partial_\xi\Psi]
  \, \wh{f_*}(\xi_1) \wh{f_{hi}}(\xi_2) \wh{f_*}(\xi_3)\ddd,
\\
\label{eq:Ju2122}
& \int_1^t\iint_{\R^2} s^{\frac12} \, e^{is\Psi} \dfrac{\chi_{2,\eta}(\vec{\xi})}{\partial_\eta\Psi}[\partial_\xi\Psi]
  \, \wh{f_*}(\xi_1) \wh{f_{med}}(\xi_2) \wh{f_*}(\xi_3)\ddd.
\end{align}

Using \eqref{eq:Ju21 lb} and the fact that $\xi_2 \partial_\eta \chi_{2,\eta}$ is Coifman--Meyer,
we can write
\[
\eqref{eq:Ju2121} = \int_1^t e^{\frac{is\xi^2}{2}} \F\big(T_m[u_*,u_{hi},u_*]\big)(s,\xi)\,ds,
\]
where $m$ satisfies the hypotheses of Lemma~\ref{lem:tri}(iii).  In particular, this term can be treated as \eqref{eq:Ju211} above.  We next write
$\eqref{eq:Ju2122} = \int_1^t s^{\frac12} e^{\frac{is\xi^2}{2}} \F\big(T_m[u_*,u_{med},u_*]\big)(s,\xi)\,ds$, where $m$ is a Coifman--Meyer symbol (cf. \eqref{eq:Ju21 lb}).  Using Lemma~\ref{thm:cm} and \eqref{est0}, we get an acceptable bound:
\begin{align*}
\| \eqref{eq:Ju2122} \|_{L_x^2} \lesssim \int_1^t s^{\frac12} \|u\|_{L_x^\infty}^2 \|u_{med}\|_{L_x^2}\,ds
  \lesssim \int_1^t s^{\frac14}\|u\|_{L_x^\infty}^2 \|\wh{f}\|_{L_\xi^\infty}\,ds.
\end{align*}
	
We next write $\eqref{eq:Ju214} = \int_1^t e^{\frac{is\xi^2}{2}}\F\big(T_m[P_* Ju,u_{hi},u_*]\big)(s,\xi)\,ds$,
where $m =\chi_{2,\eta}\partial_\xi\Psi/\partial_\eta\Psi$.  Recalling \eqref{Phases3} and \eqref{eq:Ju21 lb}, we see that $m$ is Coifman--Meyer; thus, Lemma~\ref{thm:cm} gives
\begin{align*}
\|\eqref{eq:Ju214}\|_{L_\xi^2} & \lesssim \int_1^t \|u(s)\|_{L_x^\infty}^2 \|Ju(s)\|_{L_x^2}\,ds,
\end{align*}
which is acceptable.  As \eqref{eq:Ju215} can be estimated similarly, we complete the treatment of \eqref{eq:Ju21}.


\subsubsection{Estimation of \eqref{eq:Ju22}}
This term is very similar to \eqref{eq:Ju21}. In particular, on the support of $\chi_{2,\sigma}$ we have
	$\vert\partial_\sigma\Psi\vert=\vert\xi_2-\xi_3\vert
	\gtrsim\vert\xi_2\vert\gtrsim\vert\vec{\xi}\vert.$
Thus we can use the identity $
	e^{is\Psi} =  (is\partial_\sigma\Psi)^{-1}\partial_\sigma e^{is\Psi}$
to integrate by parts in $\sigma,$ and the same ideas used to estimate \eqref{eq:Ju21} then suffice to handle the resulting terms.


\subsubsection{Estimation of \eqref{eq:Ju23}}
As in \eqref{eq:fhat2 s lb} we note that on the support of $\chi_{2,s}$ we have
\[
\vert\Psi\vert\gtrsim \vert\xi_2\vert^2\sim\vert\xi_1\vert^2\sim \vert\xi_3\vert^2\sim\vert\vec{\xi}\vert^2.
\]
Thus, we can use the identity $e^{is\Psi} = \partial_s e^{is\Psi} (i\Psi)^{-1}$ and integrate by parts in $s$ to get
	\begin{align}
	\eqref{eq:Ju23}&=
	\bigg[\iint_{\R^2} e^{is\Psi}\chi_{2,s}(\vec{\xi}) \dfrac{s\partial_\xi\Psi}{\Psi}\wh{f_*}(\xi_1)
	\wh{f_{hi}}(\xi_2)\wh{f_*}(\xi_3)\dd\bigg]_{s=1}^t
	\label{eq:Ju231}	
\\
& \quad\!-\!\int_1^t\!\!\!\iint_{\R^2}\!\!
	\dfrac{s \, e^{is\Psi}\chi_{2,s}\!(\vec{\xi})
	\partial_\xi\Psi}{\Psi}  \partial_s[\phi_*(\xi_1)\phi_{hi}(\xi_2)\phi_*(\xi_3)]\,
	\wh{f}(\xi_1)\wh{f}(\xi_2)\wh{f}(\xi_3)\ddd
	\label{eq:Ju232}	
\\
& \quad-\int_1^t \iint_{\R^2} e^{is\Psi}\chi_{2,s}(\vec{\xi})
	\dfrac{\partial_\xi\Psi}{\Psi} \wh{f_*}(\xi_1) \wh{f_{hi}}(\xi_2)\wh{f_*}(\xi_3)\ddd
	\label{eq:Ju233}	
\\
& \quad-\int_1^t\iint_{\R^2} e^{is\Psi}\chi_{2,s}(\vec{\xi})
	\dfrac{s\partial_\xi\Psi}{\Psi}\phi_*(\xi_1)\phi_{hi}(\xi_2)\phi_*(\xi_3)
	\partial_s\big[ \wh{f}(\xi_1) \wh{f}(\xi_2)\wh{f}(\xi_3) \big] \ddd.
	\label{eq:Ju234}	
\end{align}

Thanks to the lower bound on $\Psi$, these terms are similar to the ones in \eqref{eq:Ju111}--\eqref{eq:Ju114}.
In fact,
\eqref{eq:Ju231} can be estimated exactly like the term \eqref{eq:Ju111}.
For the term \eqref{eq:Ju232}, we can argue as in the estimate of \eqref{eq:Ju112} (see also \eqref{eq:Ju1121}).  Furthermore, the term \eqref{eq:Ju233} is similar to \eqref{eq:Ju113}, while \eqref{eq:Ju234} is similar to \eqref{eq:Ju114}.
In particular, applying the trilinear estimate \eqref{TriEst3} in each case leads to acceptable contributions.


\begin{remark}\label{remark:Ju}
We have estimated \eqref{eq:Ju21}--\eqref{eq:Ju23},
which completes the estimation of the contribution of $\chi_2$ to \eqref{eq:Ju2}.
As in Remark~\ref{remark:fhat}, we can also decompose the support of $\chi_1$ and $\chi_3$
so that we have suitable lower bounds for $\partial_\eta\Psi$, $\partial_\sigma\Psi$, or $\Psi$.
Thus, we can use similar ideas as above to estimate the contribution of $\chi_1$ and $\chi_3$.
\end{remark}
	


\subsection{Estimation of \texorpdfstring{\eqref{eq:Ju3}}{Lg}}
We can estimate \eqref{eq:Ju3} in a very similar manner to \eqref{eq:Ju2}.
Once again the heart of matter is to decompose frequency space (away from the origin) into regions where
one has suitable lower bounds on either the phase $\Omega$ or its derivatives.
See Section~\ref{sec:fhat bootstrap3} for a detailed discussion of this decomposition.

\subsection{Estimation of \texorpdfstring{\eqref{eq:Ju gauge}}{Lg}}
We can handle \eqref{eq:Ju gauge} quite simply thanks to the gauge-invariance of $\vert u\vert^2 u$.
Indeed, using \eqref{eq:factorization} we can rewrite
	$$\eqref{eq:Ju gauge}=\int_1^t s^{-1}\partial_\xi\F\bar{M}(s)\F^{-1}
	\big(\vert\F\M f\vert^2\F\M f\big)(s,\xi)\,ds.$$
Noting that $\partial_\xi\F\bar{M}\F^{-1}=\F\bar{M}\F^{-1}\partial_\xi$ and using \eqref{gauge X} and \eqref{eq:forms of Ju}, we can estimate
	$$
	\|\eqref{eq:Ju gauge}\|_{L_\xi^2}\lesssim \int_1^t s^{-1}\|\F\M f(s)\|_{L_\xi^\infty}^2\|\partial_\xi\F\M f(s)\|_{L_x^2}\,ds
	\lesssim \int_1^t s^{-\frac34}\|u(s)\|_{X}^3\,ds,
	$$	
which is acceptable. This completes 
the proof of Proposition~\ref{prop:Ju}.

\section{Norm growth for a model nonlinearity}\label{sec:grow}

In this section we study the model equation
\begin{equation}\tag{\ref{model}}
(i\partial_t+\tfrac12\partial_{xx}) u = i|u|^2 u
\end{equation}
and prove Theorem~\ref{thm:grow}.  Throughout the section, we suppose $u$ is a solution to \eqref{model} as in the statement of Theorem~\ref{thm:grow},
with $f(t)=e^{-it\partial_{xx}/2}u(t)$. 
In particular, $\|u_1\|_{\Sigma}=\eps$, $u$ is defined at least up to time $T_\eps=\exp(\frac{1}{c\eps^2})$ for some $c>0$, 
and $u$ satisfies the bounds given in \eqref{worse-J}.  We write $T_{\max}\in(T_{\eps},\infty]$ for the maximal time of existence of $u$.

The plan is to exhibit growth in time of $|\wh{f}(t,\xi)|^2$ by comparing it to a (growing) solution to an ODE (cf. \eqref{ode} and \eqref{approx-ode} below).  To prove that the ODE accurately models the PDE requires good bounds for the solution.  One of the benefits of working with \eqref{model} is that we can prove a better estimate for the $L_x^2$-norm of $Ju$ than the one given in \eqref{worse-J}.  (Recall that the bound in \eqref{worse-J} holds with an arbitrary cubic nonlinearity.)  In particular, we have the following.

\begin{lemma}[Improved bounds]\label{L:improved bounds} If $\eps>0$ is sufficiently small, then
\begin{equation}\label{good-estimates}
\sup_{t\in[1,T_{\eps}]} \bigl\{ \|\wh{f}(t)\|_{L_\xi^\infty} + t^{\frac12}\|u(t)\|_{L_x^\infty} + t^{-\frac{1}{10}}\|u(t)\|_{L_x^2} + t^{-\frac{1}{10}}\|Ju(t)\|_{L_x^2}\bigr\} \lesssim \eps.
\end{equation}
\end{lemma}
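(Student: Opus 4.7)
The first two bounds in \eqref{good-estimates}, $\|\wh{f}(t)\|_{L^\infty_\xi}\lesssim \eps$ and $t^{1/2}\|u(t)\|_{L_x^\infty}\lesssim \eps$, are already part of \eqref{worse-J}, so my plan is to upgrade only the $L_x^2$-bound on $u$ and the $t^{1/4}$-growth of $\|Ju(t)\|_{L_x^2}$ to the $t^{1/10}$-growth claimed in \eqref{good-estimates}. I would do this via two direct energy estimates that exploit the gauge invariance of the nonlinearity $i|u|^2 u$, in contrast with the resonance-based arguments of Section~\ref{sec:Ju}, which had to handle arbitrary cubic nonlinearities.

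For the $L_x^2$-bound, I would pair \eqref{model} with $\bar u$ and take real parts. The $\tfrac{i}{2}\partial_{xx}u$ contribution integrates to a purely imaginary quantity, leaving
\[
\partial_t\|u(t)\|_{L_x^2}^2 = 2\int |u(t)|^4\,dx \le 2\|u(t)\|_{L_x^\infty}^2\|u(t)\|_{L_x^2}^2.
\]
Feeding the dispersive bound $\|u(s)\|_{L_x^\infty}\lesssim \eps s^{-1/2}$ from \eqref{worse-J} into Gronwall's inequality yields $\|u(t)\|_{L_x^2}\lesssim \eps\, t^{C\eps^2}$ for some absolute $C$, which is in turn $\lesssim \eps t^{1/10}$ once $\eps$ is sufficiently small.

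For the $\|Ju\|_{L_x^2}$-bound, the key step (and what I expect to be the main point of the argument) is the algebraic identity
\[
J(i|u|^2 u) = i\bigl(2|u|^2 Ju - u^2\overline{Ju}\bigr).
\]
I would verify this by expanding $J(|u|^2 u) = x|u|^2 u + it\partial_x(u\bar u u)$, applying the product rule, and then eliminating $\partial_x u$ and $\partial_x\bar u$ via the relations $it\partial_x u = Ju - xu$ and $it\partial_x \bar u = x\bar u - \overline{Ju}$; the $x|u|^2 u$ pieces cancel precisely because the nonlinearity is gauge invariant. Combined with the fact that $J$ commutes with $i\partial_t + \tfrac12\partial_{xx}$, this turns the equation for $Ju$ into a linear Schr\"odinger equation with an $L_x^\infty$-potential of size $\|u\|_{L_x^\infty}^2$. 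The standard energy estimate then gives $\partial_t\|Ju(t)\|_{L_x^2}^2\lesssim \|u(t)\|_{L_x^\infty}^2 \|Ju(t)\|_{L_x^2}^2$, and Gronwall once more yields $\|Ju(t)\|_{L_x^2}\lesssim \eps\, t^{C\eps^2}\lesssim \eps\, t^{1/10}$ for $\eps$ small.

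The only real obstacle is verifying the commutator identity above; everything else reduces to Gronwall on a linear ODE with logarithmically integrable coefficient. The $t^{C\eps^2}$-loss is harmless on $[1,T_\eps]$ as long as $\eps$ is chosen small enough that $C\eps^2<1/10$, independently of the constant $c$ appearing in the definition of $T_\eps$.
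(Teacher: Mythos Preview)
Your proposal is correct and follows essentially the same approach as the paper: both arguments reduce to the single estimate $\|J(|u|^2 u)\|_{L_x^2}\lesssim \|u\|_{L_x^\infty}^2\|Ju\|_{L_x^2}$ (plus the analogous trivial bound for $u$ itself), feed in the dispersive decay from \eqref{worse-J}, and conclude by Gronwall that the growth is $t^{C\eps^2}\le t^{1/10}$. The only cosmetic difference is that the paper derives the key bound via the factorization $J=M(t)\,it\partial_x\,\bar M(t)$, writing $\|J(|u|^2 u)\|_{L_x^2}\sim\|t\partial_x(|\bar M u|^2\bar M u)\|_{L_x^2}$, whereas you compute the commutator identity $J(|u|^2 u)=2|u|^2 Ju - u^2\overline{Ju}$ directly; these are two equivalent ways of exploiting gauge invariance.
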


\begin{proof} Comparing with \eqref{worse-J}, we need only consider the $L_x^2$-norms.

Direct computation shows $J(t):=x+it\partial_x=M(t) it\partial_x\bar{M}(t)$, where $M(t)=e^{ix^2/2t}$.  Thus,
\[
\|J\bigl(|u|^2 u)\|_{L_x^2} \sim \| t\partial_x (|\bar M u|^2 \bar M u)\|_{L_x^2} \lesssim \|u\|_{L_x^\infty}^2 \|Ju\|_{L_x^2},
\]
and hence by the Duhamel formula and \eqref{worse-J},
\[
\|Ju(t)\|_{L_x^2} \leq C\eps + [C\eps]^2\int_1^t \|Ju(s)\|_{L_x^2} \tfrac{ds}{s}.
\]
Thus, by Gronwall's inequality, we have
$\|Ju(t)\|_{L_x^2} \lesssim t^{[C\eps]^2}\eps$ for $1\leq t\leq T_\eps$, which suffices if $\eps$ is small enough.  The same argument treats  the $L_x^2$-norm of $u$.
\end{proof}

Next, we prove that we can propagate bounds for $u$ as long as we can control $\wh{f}$ in $L_\xi^\infty$.

\begin{lemma}[Propagating bounds]\label{L:propagate} Suppose $T_{\eps}\leq T_1<T_2<T_{\max}$ and
\[
 \|u(T_1)\|_{L_x^2} + \|Ju(T_1)\|_{L_x^2}+ \sup_{t\in[T_1,T_2]}\|\wh{f}(t)\|_{L_\xi^\infty}  \leq \mu
\]
for some $\mu>0$.  If $\mu$ is sufficiently small, then
\[
\sup_{t\in[T_1,T_2]} \bigl\{t^{-\frac{1}{10}} \|u(t)\|_{L_x^2} + t^{-\frac{1}{10}} \|Ju(t)\|_{L_x^2}\bigr\} \lesssim \mu.
\]
\end{lemma}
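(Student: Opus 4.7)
The plan is to run a continuity/bootstrap argument on the quantity
\[
G(t) := \|u(t)\|_{L_x^2} + \|Ju(t)\|_{L_x^2},
\]
using the pointwise bound on $\wh f$ to control $\|u\|_{L_x^\infty}$ through Lemma~\ref{lem:decay}, and then closing with Gronwall. Since the hypothesis gives $G(T_1)\leq 2\mu$ and $T_1\geq T_\eps\geq 1$, it suffices to propagate the a priori bound $G(t)\leq A\mu t^{1/10}$ for a suitable absolute constant $A$ (to be chosen below) on $[T_1,T_2]$.

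Assume for contradiction that there is a first time $t_\ast\in(T_1,T_2]$ at which $G(t_\ast)=A\mu t_\ast^{1/10}$. On $[T_1,t_\ast]$ the bootstrap assumption together with the hypothesis $\|\wh f(t)\|_{L_\xi^\infty}\leq \mu$ yields, via \eqref{eq:Xdecay} and the definition of the $X(t)$-norm,
\[
t^{\frac12}\|u(t)\|_{L_x^\infty} \lesssim \|\wh f(t)\|_{L_\xi^\infty} + t^{-\frac14}\|Ju(t)\|_{L_x^2} \lesssim \mu + A\mu\, t^{-\frac{3}{20}} \lesssim A\mu,
\]
and hence $\|u(t)\|_{L_x^\infty}^2 \lesssim (A\mu)^2\, t^{-1}$ on $[T_1,t_\ast]$. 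Next I use the Duhamel formula \eqref{eq:duh} and the identity $J(t)=M(t)\, it\partial_x \,\bar M(t)$ (from \eqref{eq:factorization}) to estimate
\[
\|J(|u|^2 u)(s)\|_{L_x^2} \;\sim\; \bigl\| s\partial_x\bigl(|\bar M u|^2 \bar M u\bigr)(s)\bigr\|_{L_x^2} \;\lesssim\; \|u(s)\|_{L_x^\infty}^2 \|Ju(s)\|_{L_x^2},
\]
exactly as in the proof of Lemma~\ref{L:improved bounds}, and the analogous (easier) estimate for $\||u|^2 u\|_{L_x^2}$. Combining these with the Duhamel representations of $u(t)$ and $Ju(t)$ on $[T_1,t]$ gives
\[
G(t) \;\leq\; G(T_1) + C\!\int_{T_1}^t \|u(s)\|_{L_x^\infty}^2\, G(s)\, ds \;\leq\; 2\mu + C(A\mu)^2\!\int_{T_1}^t s^{-1}\, G(s)\, ds.
\]

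Gronwall's inequality then yields $G(t) \leq 2\mu\,(t/T_1)^{C(A\mu)^2} \leq 2\mu\, t^{C(A\mu)^2}$ on $[T_1,t_\ast]$. Choosing $\mu$ small enough (depending on $A$ and the absolute constant $C$) so that $C(A\mu)^2 < \tfrac{1}{10}$ gives $G(t_\ast) \leq 2\mu\, t_\ast^{1/10}$, which contradicts $G(t_\ast)=A\mu\, t_\ast^{1/10}$ provided $A>2$. This closes the bootstrap on all of $[T_1,T_2]$ and proves the claim.

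The only subtlety—the would-be obstacle—is the nonlinear feedback of $\|Ju\|_{L_x^2}$ into $\|u\|_{L_x^\infty}$ via \eqref{eq:Xdecay}: in principle a growing $Ju$ could worsen the dispersive decay and spoil the Gronwall integrand. This is precisely why the argument is set up as a bootstrap on the $t^{1/10}$ growth rate: the extra factor $t^{-1/4}$ in \eqref{eq:Xdecay} beats any $t^{1/10}$ growth of $Ju$, so under the bootstrap assumption one still has the clean $\|u\|_{L_x^\infty}^2 \lesssim \mu^2/t$, which is exactly what keeps the Gronwall exponent $C\mu^2$ small and lets us reclose at the same rate $t^{1/10}$ with a strictly better constant.
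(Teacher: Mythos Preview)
Your proof is correct and follows essentially the same approach as the paper's: a continuity/bootstrap argument in which the assumed $L_\xi^\infty$ control on $\wh f$ together with the bootstrapped $t^{1/10}$ bound on $Ju$ feeds through Lemma~\ref{lem:decay} to give $\|u\|_{L_x^\infty}^2\lesssim\mu^2/t$, after which the $J(|u|^2u)$ estimate from Lemma~\ref{L:improved bounds} and Gronwall close the bootstrap. The only cosmetic difference is that the paper runs the bootstrap on $\|Ju\|_{L_x^2}$ alone and then remarks that the $L_x^2$-norm is handled analogously, whereas you bundle both into $G(t)$ from the start.
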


\begin{proof}
The proof is similar to the arguments above. Define the set
\[
S=\{t\in[T_1,T_2]: t^{-\frac{1}{10}}\|Ju(t)\|_{L_x^2} < C\mu\}.
\]
By assumption, $T_1\in S$ for some appropriate choice of $C$.
Suppose toward a contradiction that $S\neq[T_1,T_2]$. By continuity, we can find a first time $T\in(T_1,T_2]$ so that
\begin{equation}\label{bs-lb}
\|Ju(T)\|_{L_x^2} \geq C\mu  T^{\frac{1}{10}}.
\end{equation}
Using Lemma~\ref{lem:decay}, we find that
\[
\sup_{t\in[T_1,T]} t^{\frac12} \|u(t)\|_{L_x^\infty} \leq \tilde C\cdot C\mu
\]
for some absolute constant $\tilde C$.  Arguing as in Lemma~\ref{L:improved bounds}, we deduce
\[
\|Ju(T)\|_{L_x^2} \leq C\mu T^{[\tilde C\cdot C\mu]^2}
\]
However, this contradicts \eqref{bs-lb} for $\mu$ small enough. Thus $S=[T_1,T_2]$.  A similar Gronwall argument yields the bounds for the $L^2$-norm of $u$.
\end{proof}

We turn to estimating the size of $\wh{f}$.  We define
\[
A(t,\xi):= 2|\wh{f}(t,\xi)|^2
\]
and observe that for each $\xi\in\R$, the function $A(t,\xi)$ satisfies an ODE in $t$. 
Indeed, rewriting the equation \eqref{model} as $\partial_t f= e^{-it\partial_{xx}/2}(|u|^2 u)$ and using \eqref{eq:factorization}, we deduce
\begin{equation}\label{approx-ode}
\partial_t A = t^{-1} A^2 + t^{-1} R,
\end{equation}
where the remainder $R$ is given by
\begin{equation}\label{def-R}
R= 4\Re\bigl\{\overline{\F f}\bigl[ (\F\bar M\F^{-1} - 1)|\F M f|^2 \F M f + |\F M f|^2 \F M f - |\wh{f}|^2 \wh{f}\ \bigr]\bigr\}.
\end{equation}
We expect that as long as $u$ obeys good estimates, the remainder $R$ will decay in time. Thus the behavior of $A$ should be governed by a (growing) solution to the ODE
\begin{equation}\label{ode}
\partial_t B = t^{-1} B^2.
\end{equation}

We first consider the issue of controlling the remainder.

\begin{lemma}[Controlling the remainder]\label{L:remainder} Suppose $1\leq T_1<T_2<T_{\max}$ and
\begin{equation}\label{remainder1}
\sup_{t\in[T_1,T_2]} \bigl\{\| \wh{f}(t)\|_{L_\xi^\infty} + t^{-\frac{1}{10}} \| Ju(t)\|_{L_x^2} + t^{-\frac{1}{10}} \|u(t)\|_{L_x^2} \bigr\} \leq \mu
\end{equation}
for some $\mu>0$. Then
\[
\sup_{t\in[T_1,T_2]} t^{\frac1{10}} \|R(t)\|_{L_\xi^\infty} \lesssim \mu^4.
\]
\end{lemma}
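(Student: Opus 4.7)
The plan is to decompose the remainder as $R = 4\Re\bigl\{\overline{\F f}\cdot[\mathrm{I} + \mathrm{II}]\bigr\}$, where
\[
\mathrm{I} := (\F\b M\F^{-1} - 1)\bigl(|\F Mf|^2 \F Mf\bigr), \qquad \mathrm{II} := |\F Mf|^2 \F Mf - |\wh f|^2 \wh f.
\]
Since the hypotheses already give $\|\overline{\F f}\|_{L_\xi^\infty} \leq \mu$, it suffices to prove that $\|\mathrm{I}\|_{L_\xi^\infty} + \|\mathrm{II}\|_{L_\xi^\infty} \lesssim t^{-3/20}\mu^3$, which is stronger than the $t^{-1/10}\mu^3$ needed since $t^{1/10}\cdot t^{-3/20} = t^{-1/20}\leq 1$.

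For the operator-difference piece $\mathrm{I}$, I would replay verbatim the argument from Section~\ref{sec:fhat} used to control \eqref{eq:gauge2}: split the output in frequency at threshold $\sqrt{t}$ via $\b P_{\leq\sqrt t}$ and $\b P_{>\sqrt t}$, apply Bernstein and Plancherel on the high-frequency part, and apply Hausdorff--Young together with the pointwise bound $|\b M-1|\lesssim t^{-1/2}|x|$ from \eqref{eq:pw} on the low-frequency part. Both regimes reduce matters to estimating
\[
\|\partial_\xi\bigl(|\F Mf|^2\F Mf\bigr)\|_{L_\xi^2} \lesssim \|\F Mf\|_{L_\xi^\infty}^2\, \|\partial_\xi \F Mf\|_{L_\xi^2}.
\]
By \eqref{gauge X} applied under \eqref{remainder1} (which gives $\|u\|_{X(t)}\lesssim \mu + t^{-1/4}\cdot t^{1/10}\mu \lesssim \mu$) we have $\|\F Mf\|_{L_\xi^\infty}\lesssim \mu$, and by \eqref{eq:forms of Ju} the remaining factor equals $\|Ju\|_{L_x^2}\lesssim t^{1/10}\mu$, giving the accumulated budget $t^{-1/4}\cdot \mu^2\cdot t^{1/10}\mu = t^{-3/20}\mu^3$.

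For the algebraic-difference piece $\mathrm{II}$, I would telescope
\[
|A|^2 A - |B|^2 B = (A-B)|A|^2 + B\b A(A-B) + B^2\,\overline{(A-B)}
\]
with $A = \F Mf$ and $B = \wh f$, so that each summand contains exactly one copy of $\F(M-1)f$ (or its conjugate) and two uniformly $L^\infty_\xi$-bounded factors. The bound $\|\F(M-1)f\|_{L_\xi^\infty}\lesssim t^{-1/4}\|Ju\|_{L_x^2}\lesssim t^{-3/20}\mu$ is exactly the intermediate step from the proof of \eqref{eq:Xdecay} in Section~\ref{sec:decay} (again using the same high/low split at $\sqrt t$), and the two bounded factors contribute $\mu^2$ by \eqref{gauge X} and the hypothesis on $\|\wh f\|_{L_\xi^\infty}$. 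Thus $\|\mathrm{II}\|_{L_\xi^\infty}\lesssim t^{-3/20}\mu^3$, finishing the estimate.

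I do not expect a substantive obstacle. The exponent $1/10$ in the hypothesis \eqref{remainder1} was calibrated precisely so that the growth $t^{1/10}$ of $\|Ju\|_{L_x^2}$ is defeated both by the $t^{-1/4}$ Bernstein loss in the high-frequency half of $\mathrm{I}$ and by the $t^{-1/4}$ gain produced by $M-1$ via \eqref{eq:pw} in $\mathrm{II}$, leaving the comfortable residual $t^{-1/20}$ after the multiplication by $t^{1/10}$ in the conclusion. The only mild care needed is checking that $\|\F Mf\|_{L_\xi^\infty}\lesssim\mu$ under \eqref{remainder1}, but this follows cleanly from \eqref{gauge X} and the observation that $t^{-1/4}\cdot t^{1/10} = t^{-3/20}\leq 1$.
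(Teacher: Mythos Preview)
Your proposal is correct, and the decomposition into pieces $\mathrm{I}$ and $\mathrm{II}$ matches the paper's, but the method of estimation differs. You recycle the high/low frequency split at threshold $\sqrt{t}$ from Section~\ref{sec:decay} and the treatment of \eqref{eq:gauge2}, obtaining the bound $t^{-3/20}\mu^3$ for each piece. The paper instead avoids the frequency split entirely by invoking the interpolated pointwise inequality $|M(t)-1|\lesssim t^{-\delta}|x|^{2\delta}$ with $\delta=\tfrac15$; combined with Hausdorff--Young and Cauchy--Schwarz this yields $\|\F[M-1]f\|_{L_\xi^\infty}\lesssim t^{-1/5}\|\langle x\rangle f\|_{L_x^2}\lesssim t^{-1/10}\mu$, and similarly for $\mathrm{I}$. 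The paper's route is shorter and cleaner, but it is precisely here that the hypothesis $t^{-1/10}\|u(t)\|_{L_x^2}\leq\mu$ is actually used (through the weight $\langle x\rangle$ rather than $|x|$); your argument, by contrast, relies only on $\|\wh f\|_{L_\xi^\infty}$ and $\|Ju\|_{L_x^2}$, and delivers a slightly sharper decay rate. Either approach suffices for the application.
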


\begin{proof}  The main ideas appear already in the proof of Lemma~\ref{lem:decay}, but we include the details for completeness. First, note the pointwise bound
$|M(t)-1| \lesssim t^{-\delta} |x|^{2\delta}$ for any $0\leq\delta\leq\tfrac12$.
Taking $\delta=\frac15$, this together with Hausdorff--Young, Cauchy--Schwarz, \eqref{eq:forms of Ju} and \eqref{remainder1} implies
\[
\| \F[M-1] f\|_{L_\xi^\infty} \lesssim t^{-\frac15} \|\langle x\rangle f\|_{L_x^2}  \lesssim t^{-\frac1{10}} \mu.
\]
Using \eqref{remainder1} we also have
$\|\F Mf\|_{L_\xi^\infty}
\lesssim \mu$.

Estimating as above and using Plancherel, we obtain
\begin{align*}
\| \F[\bar M-1]\F^{-1}&\bigl(|\F M f|^2 \F M f\bigr)\|_{L_\xi^\infty} \lesssim t^{-\frac15} \|\langle x\rangle \F^{-1}\bigl(|\F Mf|^2 \F Mf\bigr)\|_{L_x^2}\\
 &\lesssim t^{-\frac15}\|\F Mf\|_{L_x^\infty}^2 \|\langle \partial_x \rangle \F Mf\|_{L_x^2} \lesssim t^{-\frac1{10}}\mu^3.
\end{align*}
Furthermore,
\[
\| |\F M f|^2 \F Mf - |\wh f|^2 \wh f\|_{L_\xi^\infty} \lesssim \mu^2 \|\F[M-1]f\|_{L_\xi^\infty} \lesssim t^{-\frac{1}{10}}\mu^3,
\]
and the result follows.
\end{proof}

We next look for a point $\xi_0\in\R$ such that $A(t,\xi_0) \gtrsim \eps^2$ at $t=T_\eps$.  Using \eqref{approx-ode},
\[
\partial_t\bigl[ A(t)\exp\bigl(-\textstyle\int_1^t A(s)\tfrac{ds}{s}\bigr)\bigr] = t^{-1}\exp\bigl(-\int_1^t A(s)\tfrac{ds}{s}\bigr) R(t).
\]
Thus, exploiting $A\geq 0$, Lemma~\ref{L:improved bounds} and Lemma~\ref{L:remainder} (with $\mu\sim\eps$), we can deduce
\[
A(T_\eps,\xi)\geq A(1,\xi)-\int_1^{T_\eps} \exp\biggl(-\int_1^s A(\sigma)\tfrac{d\sigma}{\sigma}\biggr)R(s)\tfrac{ds}{s}\geq A(1,\xi)-O(\eps^4).
\]
Recalling \eqref{asmp:u1} and taking $\eps$ small, we can find $\xi_0\in\R$ so that
\begin{equation}\label{pw-lb}
A_0:= A(T_\eps,\xi_0) \geq \tfrac{1}{5}\eps^2.
\end{equation}

We now define $B(t)$ to be the solution to \eqref{ode} that agrees with $A(t,\xi_0)$ at time $t=T_\eps$:
\begin{equation}\label{def:B}
B(t):= \frac{A_0}{1-A_0\log(\frac{t}{T_\eps})}, \quad  B:[T_\eps,T_\eps\exp(\tfrac{1}{A_0}))\to[A_0,\infty).
\end{equation}
Note that $B(t)\to\infty$ as $t\to T_\eps\exp(\tfrac{1}{A_0})$.
To show that $B(t)$ is good approximation to $A(t,\xi_0)$ for $t\geq T_\eps$ we introduce
\[
D(t):=A(t,\xi_0)-B(t), \quad D:\bigl[T_\eps,T_{\max}\wedge T_{\eps}\exp(\tfrac{1}{A_0})\bigr)\to\R.
\]
Here and below, $a\wedge b$ denotes $\min\{a,b\}$. We now consider the issue of controlling this difference.

\begin{lemma}[Controlling the difference]\label{L:difference} Suppose $T_{\eps}\leq T\leq T_{\max}\wedge T_{\eps}\exp(\tfrac{1}{A_0})$ and
\begin{equation}\label{bs-d1}
\sup_{[T_\eps,T]}\|\wh{f}(t)\|_{L_\xi^\infty} \leq \mu
\end{equation}
for some $0<\eps\ll \mu\ll1 $. Then
\[
|D(T)| \lesssim \tfrac{1}{A_0} \cdot \bigl(\tfrac{T}{T_\eps}\bigr)^{2\mu^2}\cdot \mu^4T_\eps^{-\frac{1}{10}}\cdot B(T).
\]

\end{lemma}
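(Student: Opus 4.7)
The plan is to derive a linear ODE for $D(t) := A(t,\xi_0) - B(t)$ and solve it explicitly via an integrating factor. Subtracting \eqref{ode} from \eqref{approx-ode} evaluated at $\xi = \xi_0$ gives
\[
\partial_t D = t^{-1}(A^2 - B^2) + t^{-1} R(t,\xi_0) = t^{-1}(A + B) D + t^{-1} R(t,\xi_0),
\]
with initial condition $D(T_\eps) = 0$. Solving with the integrating factor $\exp\bigl(-\int_{T_\eps}^t s^{-1}(A+B)\,ds\bigr)$, I would write
\[
D(T) = \int_{T_\eps}^T s^{-1} R(s,\xi_0) \exp\!\Bigl(\textstyle\int_s^T \sigma^{-1}(A+B)\,d\sigma\Bigr) \,ds.
\]

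Two observations make this tractable. First, since $B$ itself satisfies \eqref{ode}, the fundamental theorem of calculus gives $\int_s^T \sigma^{-1} B\,d\sigma = \log\!\bigl(B(T)/B(s)\bigr)$, so that portion of the integrating factor contributes exactly $B(T)/B(s)$ -- this is what produces the $B(T)$ factor on the right-hand side of the claim. Second, the bootstrap hypothesis \eqref{bs-d1} forces $A(s,\xi_0) = 2|\wh{f}(s,\xi_0)|^2 \leq 2\mu^2$, yielding $\exp\!\bigl(\int_s^T \sigma^{-1} A\,d\sigma\bigr) \leq (T/s)^{2\mu^2}$.

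To apply Lemma~\ref{L:remainder} and conclude $|R(s,\xi_0)| \lesssim s^{-1/10}\mu^4$, I first need $t^{-1/10}\|u\|_{L^2}$ and $t^{-1/10}\|Ju\|_{L^2}$ to be $\lesssim \mu$ on $[T_\eps, T]$. Lemma~\ref{L:improved bounds} supplies this at $t = T_\eps$ with constant $\lesssim \eps \ll \mu$, and Lemma~\ref{L:propagate} (applied with $T_1 = T_\eps$) propagates these bounds throughout $[T_\eps,T]$.

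Combining everything, and using the crude bound $B(s)^{-1} \leq A_0^{-1}$ for $s \in [T_\eps, T_\eps\exp(1/A_0))$, I obtain
\[
|D(T)| \lesssim \mu^4 A_0^{-1}\, B(T)\, T^{2\mu^2}\!\int_{T_\eps}^T s^{-11/10-2\mu^2}\,ds
\lesssim \mu^4 A_0^{-1}\, B(T)\, (T/T_\eps)^{2\mu^2}\, T_\eps^{-1/10},
\]
which is the claimed estimate. The main obstacle is really just assembling the three ingredients correctly: the identity $\int_s^T \sigma^{-1}B\,d\sigma = \log(B(T)/B(s))$ is essential to convert the integrating factor into the $B(T)$ appearing in the conclusion, and the $s^{-1/10}$ decay from Lemma~\ref{L:remainder} is precisely what makes the final integral converge at $s = T_\eps$ and produce the $T_\eps^{-1/10}$ factor.
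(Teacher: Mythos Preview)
Your proof is correct and follows essentially the same approach as the paper: derive the linear ODE for $D$, solve via the integrating factor, split $\exp\bigl(\int_s^T (A+B)\tfrac{d\sigma}{\sigma}\bigr)$ into the $A$ and $B$ pieces, and control the remainder via Lemmas~\ref{L:propagate} and~\ref{L:remainder}. The only cosmetic differences are that you compute $\exp\bigl(\int_s^T \sigma^{-1}B\,d\sigma\bigr) = B(T)/B(s)$ via the ODE before bounding $B(s)^{-1}\le A_0^{-1}$ (the paper jumps directly to $B(T)/A_0$), and you use the slightly sharper bound $(T/s)^{2\mu^2}$ rather than $(T/T_\eps)^{2\mu^2}$ before integrating --- neither changes the argument.
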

\begin{proof} Note that $D$ solves
\[
\partial_t D(t) = t^{-1}\bigl(A(t,\xi_0)+B(t)\bigr)D(t) + t^{-1}R(t,\xi_0),\quad D(T_\eps)=0.
\]
Using the integrating factor
\[
\rho(t) := \int_{T_\eps}^t [A(s,\xi_0)+B(s)]\tfrac{ds}{s},
\]
we find
\[
D(t) = e^{\rho(t)}\int_{T_\eps}^t e^{-\rho(s)}R(s,\xi_0)\tfrac{ds}{s}=\int_{T_\eps}^t \exp\biggl(\int_s^t [A(\sigma,\xi_0)+B(\sigma)]\tfrac{d\sigma}{\sigma} \biggr)R(s,\xi_0)\tfrac{ds}{s}.
\]

An explicit computation using \eqref{def:B} shows
\[
\exp\bigl(\textstyle\int_s^T B(\sigma)\tfrac{d\sigma}{\sigma}\bigr) \leq \tfrac{B(T)}{A_0}\quad\text{for all}\quad T_\eps\leq s\leq T<T_\eps\exp(\tfrac{1}{A_0}).
\]

Using \eqref{bs-d1}, we can also estimate
\[
\exp\bigl(\textstyle\int_s^T A(\sigma,\xi_0)\tfrac{d\sigma}{\sigma}\bigr)\leq \bigl(\tfrac{T}{T_\eps}\bigr)^{2\mu^2} \quad\text{for all} \quad T_\eps\leq s\leq T <T_{\max}.
\]

In view of \eqref{bs-d1}, Lemma \ref{L:propagate} and Lemma \ref{L:remainder} we have
\[
\sup_{T_{\eps}\leq t\leq T} t^{\frac{1}{10}}\|R(t)\|_{L_\xi^\infty} \lesssim \mu^4,\quad\text{whence}\quad
\int_{T_\eps}^T |R(s,\xi_0)|\,\tfrac{ds}{s} \lesssim T_{\eps}^{-\frac{1}{10}} \mu^4.
\]
Combining these estimates yields the desired conclusion.
\end{proof}

We now complete the proof of the theorem.

\begin{proof}[Proof of Theorem~\ref{thm:grow}] Let $K\gg \eps^2$ be a constant to be determined below.
We define $T_K$ to be the time such that $B(T_K) = 4K$:
\begin{equation}\label{def:TK}
T_K = T_\eps\exp(\tfrac{1}{A_0}-\tfrac{1}{4K}) \leq T_\eps\exp(\tfrac{5}{\eps^2}-\tfrac{1}{4K}).
\end{equation}
If $T_{\max}\leq T_K$, the conclusion of the theorem holds.  Thus it remains to consider the case $T_{\max}>T_K$, in which case it suffices to show
\[
\| \wh{f}(t)\|_{L_\xi^\infty} \geq K^{\frac12} \quad\text{for some}\quad t\in[T_\eps,T_K].
\]
We proceed by contradiction and suppose that
\begin{equation}\label{asmp:grow1}
\sup_{t\in[T_\eps,T_K]} \|\wh{f}(t)\|_{L_\xi^\infty} \leq K^{\frac12}. 
\end{equation}
Applying Lemma~\ref{L:difference} (with $\mu=K^{\frac12}$), we deduce that
\[
|D(T_K)| \lesssim \tfrac{1}{A_0} \cdot \bigl(\tfrac{T_K}{T_\eps}\bigr)^{2K}\cdot K^2 T_\eps^{-\frac{1}{10}} B(T_K)
\]
Recalling $A_0 \geq \tfrac{1}{5}\eps^2$ and \eqref{def:TK} and rearranging, we find
\[
|D(T_K)| \lesssim \tfrac{1}{\eps^2} \exp(-[\tfrac{1}{10c} - 10K]\tfrac{1}{\eps^2}) K^2 B(T_K).
\]

We now choose $K = \frac{1}{200c}$, so that the above becomes
\[
|D(T_K)| \lesssim \tfrac{1}{\eps^2} \exp(-\tfrac{1}{20c\eps^2}) B(T_K).
\]
For $\eps$ sufficiently small (depending only on the absolute constant $c$), this yields
\[
|D(T_K)| <\tfrac12 B(T_K),\quad\text{whence}\quad
|\wh{f}(T_K,\xi_0)|^2=\tfrac12 A(T_K,\xi_0)> \tfrac14 B(T_K) = K.
\]
This contradicts \eqref{asmp:grow1} and completes the proof of Theorem~\ref{thm:grow}.
\end{proof}

		
\appendix
\section{Construction of cutoffs}\label{section:appendix}
In this section we construct the cutoff functions used in Sections~\ref{sec:fhat}~and~\ref{sec:Ju}.
Recall the notation from \eqref{notfreq}, and let us first describe how to write $1 = \chi_1 + \chi_2 + \chi_3$ as in \eqref{eq:max specified0},
that is, in such a way that
\begin{equation}
\label{notfreqapp}
|\xi_j| \geq \max \big\{\tfrac{9}{10} |\xi_k| \, , \, k=1,2,3 \big\} \quad \text{for all} \quad \vec{\xi} \in\text{support}(\chi_j).
\end{equation}
We let $a$ denote a smooth even function such that $a(x)=1$ for $\vert x\vert\leq 1$ and $a(x)=0$ for $\vert x\vert>1+\delta$
for some small $\delta>0$. 
Denoting $a^c = 1-a$, which is a function supported on $|x| \geq 1$, we let
\begin{align}
\chi_1(\vec{\xi}) := a^c(\tfrac{\xi_1}{\xi_2})a^c(\tfrac{\xi_1}{\xi_3}),
\quad\,
\chi_2(\vec{\xi}) := a(\tfrac{\xi_1}{\xi_2})a^c(\tfrac{\xi_2}{\xi_3}),
\quad\,
\chi_3(\vec{\xi}) := a^c(\tfrac{\xi_1}{\xi_2})a(\tfrac{\xi_1}{\xi_3}) + a(\tfrac{\xi_1}{\xi_2})a(\tfrac{\xi_2}{\xi_3}).
\end{align}
 It is clear that these satisfy the desired property \eqref{notfreqapp}.
Furthermore, as the derivatives of $a$ are supported near $|x|=1$, all of the multipliers appearing above are Coifman--Meyer multipliers satisfying \eqref{CM}.
Also notice that $\xi_j \partial_{\eta} \chi_j(\vec{\xi})$ and $\xi_j \partial_{\sigma} \chi_j(\vec{\xi})$, $j=1,2,3$, are Coifman--Meyer multipliers as well.

Next we describe how to write $1=\chi_\eta+\chi_\sigma+\chi_s$ as in \eqref{eq:chi-eta}--\eqref{eq:chi-s} on the support of $\chi_2$.
We let $b$ be a smooth even function such that $b(x)=1$ for $\vert x\vert\leq\frac{1}{100}$ and $b(x)=0$ for $\vert x\vert>\frac{1}{50}$,
and define
$$
\chi_\eta = 1-b(\tfrac{\xi_2-\xi_1}{\xi_2}),
  \quad \chi_\sigma=b(\tfrac{\xi_2-\xi_1}{\xi_2})\big[1-b(\tfrac{\xi_2-\xi_3}{\xi_2})\big],
  \quad \chi_s=b(\tfrac{\xi_2-\xi_1}{\xi_2})b(\tfrac{\xi_2-\xi_3}{\xi_2}).
$$
Then the inequalities \eqref{eq:chi-eta}--\eqref{eq:chi-s} clearly hold,
and furthermore one can check that the functions $\chi_2\chi_*$ define Coifman--Meyer multipliers.
Finally notice that in view of the support properties of $\chi_2$, the multipliers
$\xi_2 \partial_\eta \big[ \chi_2 \chi_\eta (\vec{\xi}) \big]$ and $\xi_2 \partial_\sigma \big[ \chi_2 \chi_\eta (\vec{\xi}) \big]$
are Coifman--Meyer, as well.


\end{document}